\documentclass[12pt,table]{article}
\usepackage{color}
\usepackage[table,xcdraw,dvipsnames]{xcolor}
\usepackage{soul}
\usepackage[hidelinks,colorlinks=true,bookmarks=true,citecolor=Green ,linkcolor=red,urlcolor=blue]{hyperref}
\usepackage{amsmath,scalerel,amssymb}
\usepackage{amsthm}
\usepackage{mathrsfs}
\usepackage{cite}
\usepackage{MnSymbol}
\usepackage[mathletters]{ucs}
\usepackage[utf8x]{inputenc}
\usepackage[]{graphicx} 
\usepackage[percent]{overpic}
\usepackage{wrapfig}
\usepackage{float} 
\usepackage{rotating}
\usepackage{tikz}

\usepackage{accents}

\usepackage{array}
\newcolumntype{M}[1]{>{\centering\arraybackslash}m{#1}}
\newcolumntype{N}{@{}m{0pt}@{}}
\usepackage{multicol}
\usepackage{multirow}
\usepackage{caption}
\usepackage{subcaption}
\captionsetup{justification=centering,font={small}}
\captionsetup[sub]{font=small,justification=centering}
\usepackage[left=2cm,right=2cm,top=2cm,bottom=2cm]{geometry}
\usepackage{setspace}
\usepackage[most]{tcolorbox}
\usepackage{pifont}
\usepackage{wasysym}

\numberwithin{lemma}{section}
\newtheorem{proposition}{Proposition}
\numberwithin{proposition}{section}

\numberwithin{definition}{section}

\numberwithin{example}{section}

\numberwithin{theorem}{section}

\numberwithin{corollary}{section}

\newtheorem{remark}{Remark}
\numberwithin{remark}{section}

\newcommand{\beqn}{\begin{equation} \begin{aligned}}
\newcommand{\eeqn}{\end{aligned}\end{equation}}
\newcommand{\beqnn}{\begin{equation*} \begin{aligned}}
\newcommand{\eeqnn}{\end{aligned}\end{equation*}}
\soulregister\cite7
\soulregister\ref7
\soulregister\pageref7
\colorlet{Mycolor1}{green!10!orange!90!}
\colorlet{Mycolor2}{Yellow!80!}
\colorlet{Mycolor3}{green!50!}
\colorlet{Mycolor4}{Red!90!}


\def\sgn{{\rm sgn}}
\newcommand{\review}[1]{\textcolor{black}{#1}}
\makeatletter
\makeatletter
\renewcommand*{\@fnsymbol}[1]{\ifcase#1\or *\or $\star$\or$\dagger$\or$\ddagger$\or \else\@arabic{#1}\fi}
\makeatother

\date{} 
\title{A Phase Model with Large Time Delayed Coupling\footnote{This research is supported in part by the Natural Sciences and Engineering Research Council of Canada.}}

\makeatletter
\newcommand{\specificthanks}[1]{\@fnsymbol{#1}}
\makeatother

\author{Isam Al-Darabsah\footnote{Department of Applied Mathematics, University of Waterloo, Waterloo, ON,  N2L 3G1, Canada.} \textsuperscript{,}\footnote{Email: ialdarabsah@uwaterloo.ca} \and Sue Ann Campbell\textsuperscript{\specificthanks{2}}\textsuperscript{,}\footnote{Email: sacampbell@uwaterloo.ca} }

 
\begin{document}

\maketitle
	
\begin{abstract}
We consider two identical oscillators with weak, time delayed coupling. \review{We start with a general system of delay differential equations then  reduce it to a phase model.} 
\review{With the assumption of large time delay, the resulting phase model has an explicit delay and phase shift in the argument of the phases and connection function, respectively.} Using the phase model, we prove that for any type of oscillators and 
any coupling, the in-phase and anti-phase phase-locked solutions always 
exist and give conditions for their stability. We show that for small delay 
these solutions are unique, but with large enough delay multiple solutions of
each type with different frequencies may occur. We give conditions for
the existence and stability of other types of phase-locked solutions. 
We discuss the various bifurcations that can occur in the phase model as the
time delay is varied.  The results of the phase
model analysis are applied to Morris-Lecar oscillators with diffusive coupling and compared with numerical studies of the full system of delay differential equations. We also consider the case of small time delay and compare the results with the existing  ones in the literature.

\end{abstract}
{\bf Keywords}: Coupled oscillators $\cdot$ Large time delay  $\cdot$ Synchronization $\cdot$ Phase-locking
	

\section{Introduction}

Coupled oscillator models have been used to study different aspects of biology, chemistry  and engineering, for example chemical waves \cite{kuramoto2003chemical}, flashing of fireflies \cite{mirollo1990synchronization}, laser arrays  \cite{winful1988stability, wang1988dynamics},  power system networks \cite{dorfler2013synchronization}, neural networks \cite{kopell1988coupled,hansel1993phase,crook1997role,park2016weakly}, movement of a slime mold \cite{TFE}, and coupled
predator-prey systems \cite{wall2013synchronization,zhang2015robust}. 
Time delays in the connections between the oscillators are inescapable due to the time  
for a signal to propagate from one element to the other.
Many of these systems exhibit phase-locking behaviour,  i.e., all the oscillators have 
similar waveforms and frequencies, but with some fixed phase difference between different
oscillators.  To study the existence and stability of such phase-locked solutions and how 
they are related to the time delay and other parameters, one must formulate a model
for the system. We discuss two approaches below. 

One approach to study connected networks of oscillators is through phase models
\cite{DorflerB14}.  In these models, each oscillator is represented only by 
its phase along its limit cycle, 
with amplitude variation neglected \cite{hoppensteadt2012weakly,Porter2014Dynamical,schwemmer2012theory}.  
Phase models take the general form \cite{campbell2018phase,hoppensteadt2012weakly}:
 \begin{equation}
 \label{General_Phase_Model}
\frac{d \theta_{i}}{d \xi}=\Omega_{i}+ H_{i}\left(\theta_{1}(\xi), \ldots, \theta_{n}(\xi)\right),  \quad i=1, \ldots, n,
\end{equation}
where $\theta_i\in[0,2\pi)$ is the phase of the $i^{\rm th}$ oscillator, $\Omega_i>0$
the natural frequency and $H_i$ are the connection functions. 
 Motivated by the famous \textit{Kuramoto model} \cite{kuramoto2003chemical}, 
in the literature the functions $H_i$ often take the form: 
\begin{equation}
 \label{fun_H_KM}
 	H_{i}\left(\theta_{1}(\xi), \ldots, \theta_{n}(\xi)\right)= \sum_{j=1}^{n} K_{ij}H \left(\theta_{j}(\xi)-\theta_{i}(\xi)\right), \quad i=1, \ldots, n,
 \end{equation}
where  $K_{ij}$ is the adjacency matrix of
an unweighted network \cite{Porter2014Dynamical,earl2003synchronization}.  
In the original Kuramoto model \cite{kuramoto2003chemical}   the function  $H$ in (\ref{fun_H_KM}) is the sine function.  Usually, transmission time delay is introduced as an explicit delay in the argument of the phases
\cite{earl2003synchronization,kim1997multistability,Luz,NSK,yeung1999time,Schuster1989Mutual}:
 \begin{equation} 
 \label{Kuramoto_model_Exp_Delay}
\frac{d \theta_{i}}{d \xi}=\Omega_{i}+ \sum_{j=1}^{n} K_{ij}H\left(\theta_{j}(\xi-\tau)-\theta_{i}(\xi)\right), \quad i=1, \ldots, n.
\end{equation}
Most studies of this model focus only on synchronization 
\cite{earl2003synchronization} or use simplifications such as 
$H(\cdot)=\sin(\cdot)$ \cite{ermentrout2009delays,kim1997multistability,Luz,NSK,Schuster1989Mutual,yeung1999time} or  $n=2$  
\cite{kim1997multistability,Schuster1989Mutual,yeung1999time}.

Other authors introduce additional processes into  system (\ref{Kuramoto_model_Exp_Delay}). For instance, in \cite{kim1997multistability}, 
the  dynamic  behavior  of coupled oscillators with time delayed interaction under a pinning  force is studied. In \cite{NSK,yeung1999time}, the authors study time delayed phase models with  $H=\sin(\cdot)$ and random noise forcing. 
Finally, a \textit{phase shift} is sometimes included in the model of a network of 
connected oscillators to represent the temporal distance between the oscillators.
In general, the phase shift between two oscillators  $\alpha_{ij}$ is  incorporated 
in the phase model as, see e.g., \cite{brede2016frustration,ermentrout2009delays,sakaguchi1986soluble},  
 \begin{equation} 
 \label{Kuramoto_model_phasShift}
\frac{d \theta_{i}}{d \xi}=\Omega_{i}+ \sum_{j=1}^{n} K_{ij}H\left(\theta_{j}(\xi)-\theta_{i}(\xi)-\alpha_{ij}\right), \quad i=1, \ldots, n. 
\end{equation}
In the case where $H(\cdot)=\sin(\cdot)$ this model is called the Kuramoto-Sakaguchi 
model\cite{sakaguchi1986soluble}. 
In fact, there is a relation between such phase shifts and the transmission time delay. 
In \cite{izhikevich1998phase,ermentrout1994introduction}, the authors  have shown how the model with delay and the model with the phase shift are linked.  We will review the 
details of this link later in this section.

Models of coupled oscillators are also formulated as physically or biological derived 
differential equations \cite{wall2013synchronization,zhang2015robust,campbell2018phase}.  
These models are of the form 
\begin{equation}
\label{phase_eq1}
\review{\frac{d \mathbf{X}_{i}}{d \rho}={\mathbf{F}}_{i}\left(\mathbf{X}_{i}(\rho)\right)+\epsilon {\mathbf{G}}_{i}\big(\mathbf{X}_{1}(\rho), \ldots,\mathbf{X}_{i}(\rho),\ldots, \mathbf{X}_{n}(\rho)\big), \quad i=1, \ldots, n,\quad \mathbf{X}_i\in\mathbb{R}^m,}
\end{equation}
and are such that when $\epsilon=0$ the dynamical system of each uncoupled oscillator 
has an exponentially asymptotically stable $T_i-$periodic limit cycle with corresponding 
(natural) frequency $\Omega_i$.  In these models, \review{$\mathbf{X}_{i}$ represents the state of the 
$i^{th}$ oscillator of the system}, ${\mathbf{G}}_i$ are the coupling functions
and $\epsilon>0$ is the coupling strength  \cite{ET10,hoppensteadt2012weakly,izhikevich1998phase,KE02}.  Note that $\mathbf{X}_{i}$ is a vector of dimension at least $2$, but 
can be high dimensional. 
For example, in a pendulum model $\mathbf{X}_{i}$ represents the position and velocity of the 
$i^{th}$ pendulum, while in a neural model $\mathbf{X}_{i}$ represents the voltage and gating 
variables of the  $i^{th}$ neuron. 

If the coupling is weak, $0<\epsilon\ll 1$, then the theory of weakly coupled 
oscillators can be used to connect the physical model \eqref{phase_eq1} to a 
phase model \cite{crook1997role,ET10,KE02,galan2009phase,ZS09}
.
More precisely,  
the dynamics of each oscillator in the network can be rigorously reduced to a single 
equation that indicates how the phase of the oscillator changes in time 
\cite{hoppensteadt2012weakly,izhikevich1998phase,schwemmer2012theory}. 
One form of weakly coupled oscillator theory is \textit{Malkin's Theorem} where the 
connection functions in the phase model are determined explicitly in terms of ${\mathbf{G}}_i$ and 
the limit cycles of the uncoupled system, (\ref{phase_eq1}) with $\epsilon=0$.
Let $\varphi_i(t)\in\mathbb{S}^1$ be the {\em phase deviation} of the $i^{\rm th}$ oscillator
of (\ref{phase_eq1}), i.e., the change in the phase due to the coupling.
It then follows from Malkin's Theorem (see  e.g., \cite[Theorem 9.2]{hoppensteadt2012weakly}) that 
the dynamics of  (\ref{phase_eq1}) can be described by the phase deviation model:
\begin{equation}
\begin{aligned}
\label{intor_21}
\frac{d\varphi_i}{d{t}}&=H_i\big(\varphi_{1}\left(t\right)-\varphi_{i}\left(t\right), \ldots, \varphi_{n}\left(t\right)-\varphi_{i}\left(t\right)\big)+\mathcal{O}(\epsilon), \quad i=1, \ldots, n,
\end{aligned}
\end{equation}
where $H_i$ are the phase interaction functions and the variable $t:=\epsilon \rho$ 
represents slow time because the phase deviations $\varphi_i$ are slow 
variables.  The references 
\cite{KE02,hoppensteadt2012weakly,schwemmer2012theory} provide other forms  
of the theory and give further references.
\review{We also refer the reader to the recent articles  \cite{R1pietras2019network,R2nakao2016phase,R3ashwin2016mathematical} for  an overview of various numerical and analytical techniques for phase reduction.}
In \cite{izhikevich1998phase}, Izhikevich  generalizes Malkin's theorem to weakly connected oscillators with fixed delay, $\tau$, in their interaction:
\beqn
\label{phase_eq2}
\review{\frac{d \mathbf{X}_{i}}{d \rho}={\mathbf{F}}_{i}\left(\mathbf{X}_{i}(\rho)\right)+\epsilon {\mathbf{G}}_{i}\big(\mathbf{X}_{1}(\rho-\tau), \ldots, \mathbf{X}_{i}(\rho-\tau), \ldots \mathbf{X}_{n}(\rho-\tau)\big), \quad i=1, \ldots, n,\quad \mathbf{X}_{i}\in\mathbb{R}^m}
\eeqn
where all uncoupled oscillators have nearly identical natural frequencies.  
Assuming the natural frequency is $1$,  Izhikevich  shows that the phase deviation model corresponding to (\ref{phase_eq2}) is 
\begin{equation}
\begin{aligned}
\label{intor_2}
\frac{d\varphi_i}{d{t}}&=H_i\big(\varphi_{1}\left(t-\eta\right)-\varphi_{i}\left(t\right)-\zeta, \ldots, \varphi_{n}\left(t-\eta\right)-\varphi_{i}\left(t\right)-\zeta\big)+\mathcal{O}(\epsilon), \quad i=1, \ldots, n,
\end{aligned}
\end{equation}
where $\eta:=\epsilon\tau$ and $\zeta:=\tau\mod2\pi$.
The functions $H_i$ are still defined explicitly in terms of ${\mathbf{G}}_i$ and the uncoupled limit cycle in (\ref{phase_eq2}).  
 It is clear that the time delay $\tau$ enters the phase model (\ref{intor_2}) as both 
an explicit delay, $\eta$, and a phase shift, $\zeta$.
The major result that Izhikevich proved in \cite{izhikevich1998phase} is that if the delay $\tau$ in (\ref{phase_eq2}) satisfies $\epsilon\tau=\mathcal{O}(1)$  (large delay), then 
the explicit delay occurs in the phase model (\ref{intor_2}). 
However, when the delay satisfies $\tau=\mathcal{O}(1)$ with respect to $\epsilon$  (small delay), no delay appears in the argument of the phases. Hence, (\ref{intor_2}) 
becomes: 
\begin{equation}
\begin{aligned}
\label{intor_7}
\frac{d\varphi_i}{d{t}}&=H_i\big(\varphi_{1}\left(t\right)-\varphi_{i}\left(t\right)-\zeta, \ldots, \varphi_{n}\left(t\right)-\varphi_{i}\left(t\right)-\zeta\big)+\mathcal{O}(\epsilon), \quad i=1, \ldots, n.\end{aligned}
\end{equation}
 We refer the reader to the review article 
\cite{ermentrout2009delays} and the references therein for different scenarios where large or small delay appears in-phase models.

\review{In this article we focus on physical models with the following particular form
\begin{equation}
\frac{{d{{\mathbf{X}}_i}}}{{d\rho}} = {\mathbf{F}}({{\mathbf{X}}_i}(\rho)) + \epsilon\sum\limits_{j = 1}^n {{K_{ij}}} {\mathbf{G}}({{\mathbf{X}}_i}(\rho),{{\mathbf{X}}_j}(\rho - \tau )), \quad i = 1, \ldots n,\ {{\mathbf{X}}_i} \in {\mathbb{R}^m} 
\label{newmodel}
\end{equation}
where $K_{ii}=0$. 
This represent the following modelling assumptions. The oscillators are
identical.  The coupling occurs pairwise between the oscillators and there
is no coupling from an oscillator to itself. The coupling to the $i^{th}$
oscillator occurs close to that oscillator, so the time delay represents the
time it takes for information to travel from the $j^{th}$  oscillator to the
$i^{th}$ oscillator. Models with such structure occur in models of
biological systems \cite{crook1997role,wall2013synchronization}.}

\review{Assuming the uncoupled
oscillators in \eqref{newmodel} have a natural frequency $\Omega$ and
the $K_{ij}=\mathcal{O}(1)$ with respect to $\epsilon$, we show in the
appendix that the approach of \cite{izhikevich1998phase} can be applied to
yield 
\begin{equation}
\frac{d\varphi_i}{dt}=\frac{1}{\Omega}\sum_{j=1}^{n} K_{ij} H(\varphi_j(t-\eta)-\varphi_i(t)-\zeta) 
+\mathcal{O}(\epsilon) 
\label{pairwise}
\end{equation}
where $\eta:=\epsilon\Omega\tau$ and $\zeta:=\Omega\tau\mod2\pi$, in the case of large delay, i.e., when $\epsilon\Omega\tau=\mathcal{O}(1)$.
In the case of small delay \eqref{pairwise}
becomes
\begin{equation}\label{smaleDealy_intro} 
    \frac{{d{\varphi _i}(t)}}{{dt}} = \frac{1}{{\Omega}}\sum\limits_{j = 1}^n {{K_{ij}}}H\left( {{\varphi _j}(t) - {\varphi _i}(t) - \Omega \tau } \right)+ \mathcal{O}(\epsilon ).
\end{equation}
}

To see how the phase deviation model relates to the standard phase model, note 
that the phase of oscillations $\theta_i$ in (\ref{newmodel}) have the form: 
\begin{equation}
\label{eq1234}
\theta_i(\xi)=\Omega\xi+\varphi_i(t), \quad i=1, \ldots, n, 
\end{equation}
where $t=\epsilon \Omega\xi$, see \cite{izhikevich1998phase,hoppensteadt2012weakly}. Notice  
that the natural frequency of each uncoupled oscillator in (\ref{eq1234}) is $\Omega$. 
Then,
\beqn
\label{New_equation_1}
\frac{d \theta_{i}}{d \xi}=\Omega+ \epsilon \Omega\frac{d \varphi_i} {d t}=\Omega +\epsilon \sum_{j=1}^{n} K_{ij}H\left(\theta_{j}(\xi-\tau)-\theta_{i}(\xi)\right)+\mathcal{O}(\epsilon^2).
\eeqn
Similarly, when the time delay is small, we have
\beqn
\label{New_equation_2}
\frac{d \theta_{i} }{d \xi}=\Omega+ \epsilon\sum_{j=1}^{n} K_{ij}H
\left(\theta_{j}(\xi)-\theta_{i}(\xi)-\zeta\right)+\mathcal{O}(\epsilon^2).
\eeqn
Thus in the phase model formulation, the coupling strength parameter $\epsilon$ 
explicitly appears in front of the connection function $h$. 
Regarding the dynamics,  
it follows from (\ref{eq1234}) that
\[
\theta_{i+1}-\theta_{i}=\varphi_{i+1}-\varphi_{i},\quad i=1,\ldots,{n-1}
\]
i.e., phase-locked solutions   are the same as phase deviation locked solutions \cite{hoppensteadt2012weakly}.  The existence and stability of phase-locked solutions of
system (\ref{New_equation_2}) has been studied in the case of two oscillators
\cite{campbell2012phase,ermentrout2009delays} and many oscillators with 
structured coupling \cite{campbell2018phase,ermentrout2009delays,ko2004wave}.  

The goals in this paper are twofold. First, the majority of studies of coupled
oscillators with large delays have been done in the context of isolated phase models, often with just sine function coupling. Thus we will revisit and extend 
this analysis in the case where the phase model is explicitly connected to a 
physical differential equation model and the function $H$ is general. In 
particular, we will show that the
multiple stable phase-locked solutions of the same type may occur even when the
coupling is weak.
Second, note that the small delay phase deviation model 
\eqref{smaleDealy_intro} is a system of ordinary differential equations, while the large 
delay model \eqref{newmodel} is a delay differential equation model. Thus the 
spectrum of Floquet multipliers of a periodic solution is finite for the former and countably
infinite for the latter.  Nevertheless, several studies have verified numerically
that the model \eqref{smaleDealy_intro} gives an accurate description of existence and stability
of phase-locked periodic solutions of \eqref{phase_eq1} in the case of weak coupling 
and small delay 
\cite{campbell2012phase,campbell2018phase}. Here we will show why this is the case.
In particular we will show how the solutions of system (\ref{newmodel}) reduce to those of system (\ref{smaleDealy_intro}) if the delay is small.
In this article, \review{we will focus on  (\ref{newmodel})}  when $n=2$ as this is enough to illustrate our main points.

The paper is organized as follows. In the next section, we reduce the model of two weakly connected oscillators with large time delay to a phase model, and study the existence of 
phase-locked solutions.  In Section \ref{sec_stability}, we give a complete
description of the stability criteria for all phase-locked solutions and 
describe the potential bifurcations 
that can occur in the system. Then we compare our results with the 
stability criteria in \cite{campbell2012phase} when the time delay is small.   
In Section \ref{Sec3}, 
we consider a particular application to Morris-Lecar oscillators with diffusive coupling. Numerically, we derive the corresponding 
phase model, calculate the phase-locked solutions, determine their stability and explore the existence of bifurcations.
We also compare prediction of the phase model and solutions of the full model. Finally, we examine the behaviour when the time delay is small. 
In Section \ref{sec_conc}, we discuss our results.

\section{Phase Model}

Consider the system of ODEs 
	\begin{equation}
	\label{ODE}
		\frac{d\mathbf{X}_i}{d\rho}={\mathbf{F}}({\mathbf{X}_i}(\rho))\quad  i=1,2,\quad \mathbf{X}_i\in\mathbb{R}^n.
	\end{equation}
Assume that the system (\ref{ODE}) admits an exponentially asymptotically stable periodic
orbit given by $\mathbf{X}=\tilde{\mathbf{X}}(\rho)$ with natural frequency $\Omega$, $0\le \rho\le T=2\pi/\Omega$. 	
	
Next, consider a weakly connected system
		of  two identical coupled oscillators of the form (\ref{ODE}) with time delayed coupling:	
\beqn
  			\label{Full_Mod}
\frac{d\mathbf{X}_1}{d{\rho}}&=\mathbf{F}\left(\mathbf{X}_1(\rho) \right)+\epsilon \mathbf{G}\left(\mathbf{X}_1(\rho),\mathbf{X}_2(\rho-\tau);\epsilon\right),\\
			\frac{d\mathbf{X}_2}{d\rho}&=\mathbf{F}\left(\mathbf{X}_2(\rho) \right)+\epsilon \mathbf{G}\left(\mathbf{X}_2(\rho),\mathbf{X}_1(\rho-\tau);\epsilon\right),
\eeqn
where $\mathbf{G}:\mathbb{R}^n\times\mathbb{R}^n\to \mathbb{R}^n $ describes the coupling between the two oscillators and $\epsilon$ is the coupling strength. 
Assume that $\epsilon$ is sufficiently small and $\eta:=\epsilon\Omega\tau=\mathcal{O}(1)$. Let
${t}=\epsilon \rho$ be slow time and $\varphi_i(t)\in \mathbb{S}^1$ be the phase deviation from the
natural oscillation $\hat{X}(\rho)$, $\rho\ge 0$. Then, by applying weakly coupled oscillator theory  with delayed interactions in \cite{izhikevich1998phase},  $(\varphi_1,\varphi_2)^T\in \mathbb{T}^2$ is a solution to 
\beqn
\label{phase_ModEpsilon}
\frac{d\varphi_1}{d{t}}&=\frac{1}{\Omega} H(\varphi_2(t-\eta)-\varphi_1(t)-\Omega\tau)+\mathcal{O}(\epsilon),\\
\frac{d\varphi_2}{d{t}}&=\frac{1}{\Omega} H(\varphi_1(t-\eta)-\varphi_2(t)-\Omega\tau)+\mathcal{O}(\epsilon),
\eeqn
where $H$ is a $2\pi-$periodic function defined by 
\begin{equation}\label{New:funH}
    H(\phi ) = \frac{1}{{2\pi }}\int\limits_0^{2\pi } {\hat{\mathbf{Z}}{{(\rho)}^T}\mathbf{G}\left( {\hat {\mathbf X}(\rho),\hat {\mathbf X}(\rho + \phi )} \right)} d\rho.
\end{equation}
Here $\hat{\mathbf{Z}}{(\rho)}$ is the unique nontrivial $2\pi-$periodic solution to the adjoint linear system
\[\frac{{d\hat{\mathbf{Z}}}}{{d\rho}} =  - {\left[ {D\mathbf{F}\left( {\hat {\mathbf{X}}(\rho)} \right)} \right]^T}\hat{\mathbf{Z}}\]
satisfying the normalization condition
\[\frac{1}{{2\pi }}\int\limits_0^{2\pi } {\hat{\mathbf{Z}}(\rho) \cdot } \mathbf{F}\left( {\hat {\mathbf{X}}(\rho)} \right)d\rho = 1.\]
\review{The derivation of system (\ref{phase_ModEpsilon}) from (\ref{Full_Mod}) follows from  the Appendix with $n=2$ and $K_{12}=K_{21}=1$}.

Dropping the terms $\mathcal{O}(\epsilon)$ in (\ref{phase_ModEpsilon}), we obtain the phase deviation model:
\beqn
\label{phase_Mod}
\frac{d\varphi_1}{d{t}}&=\frac{1}{\Omega} H(\varphi_2(t-\eta)-\varphi_1(t)-\Omega\tau),\\
\frac{d\varphi_2}{d{t}}&=\frac{1}{\Omega} H(\varphi_1(t-\eta)-\varphi_2(t)-\Omega\tau).
\eeqn
For simplicity, in the rest of the paper we will refer to (\ref{phase_Mod}) as the \textit{phase model} instead of the phase deviation model.

We study the dynamics of the model (\ref{phase_Mod}) by exploring \textit{phase
locking} in (\ref{phase_Mod}), that is, solutions of (\ref{phase_Mod}) such that $\varphi_2-\varphi_1=\text{constant}$ \cite{hoppensteadt2012weakly}. 
We suppose that 
\begin{equation}
	\varphi_1(t)=\omega t\qquad\text{and}\qquad \varphi_2(t)=\omega t+\psi
	\label{phases}
\end{equation}
where $\omega$ is the frequency deviation of the oscillator and $\psi$ is the 
natural phase difference \cite{hoppensteadt2012weakly}. Substituting (\ref{phases}) into (\ref{phase_Mod}) leads to 
\beqn
\label{sol_sys}
\omega-\frac{1}{\Omega} H(\psi-\omega\eta-\Omega\tau)&=0,\\
\omega-\frac{1}{\Omega} H(-\psi-\omega\eta-\Omega\tau)&=0.
\eeqn
We rewrite this as 
\beqn
F(\omega,\psi)=0=F(\omega,-\psi)
\label{Fsys}
\eeqn
where 
\beqn
F(\omega,\cdot):=\omega-\frac{1}{\Omega} H(\cdot-\omega\eta-\Omega\tau).
\label{Fdef}
\eeqn

In this article, we are interested in exploring how the solutions \review{($\psi$ and $\omega$) of (\ref{sol_sys}) vary with $\tau$ when the coupling strength ($\epsilon$) and  frequency ($\Omega$) are fixed. Note that, we need only to investigate $\psi$  
 in $[0,2\pi)$, due to the $2\pi$ periodicity of $H$, and $\omega\in\mathbb{R}$.}

First, by subtracting the equations of (\ref{sol_sys}), we obtain
\begin{equation}
\label{eq_H}
	H(\psi-\omega\eta-\Omega\tau)-H(-\psi-\omega\eta-\Omega\tau)=0.
\end{equation}
Since $H$ is $2\pi-$periodic function, equation (\ref{eq_H})  always has 
the solutions $\psi=0,\pi$.  
The corresponding frequency deviation is determined from the equation
\begin{equation}
\label{omega0}
	F(\omega,0)=\omega-\frac{1}{\Omega} H(-\omega\eta-\Omega\tau)=0
\end{equation}
when $\psi=0$ and 
\begin{equation}
\label{omegapi}
	F(\omega,\pi)=\omega-\frac{1}{\Omega} H(\pi-\omega\eta-\Omega\tau)=0
\end{equation}
when $\psi=\pi$.

Equations (\ref{omega0}) and (\ref{omegapi}) are guaranteed to have a least one 
solution due to the continuity and $2\pi$ periodicity of  $H$. 
In fact, if $\tau$ is sufficiently large, they may have multiple solutions. To see 
this, recall that $\eta=\epsilon\omega \tau$ and note that 
\beqn
F_\omega(\omega,0)=1+\epsilon \tau  H'(-\omega \epsilon\Omega \tau-\Omega\tau),
\label{Fp0def}
\eeqn
\review{where $F_\omega$ is the partial derivative of $F$ with respect to $\omega$.
If 
there exists $\overline{\omega}$ such that $F(\overline{\omega},0)=0$ and 
$F_\omega(\overline{\omega},0)<0$ then \eqref{omega0} has more than one solution.
Similar arguments apply to equation (\ref{omegapi}).
This may be possible if $\tau$ is sufficiently large.}
 
\begin{remark}
The solutions $\psi^*=0$ and $\psi^*=\pi$ of (\ref{eq_H}) correspond to \textbf{in-phase} and \textbf{anti-phase} periodic solutions of the original model  (\ref{Full_Mod}), respectively.  By {in-phase} solution we mean both oscillators reach their highest peak at 
the same time,  whereas an anti-phase solution means one oscillator reaches its 
highest peak one half-period after the other oscillator. 
Examples of these solutions are given in Figure \ref{Fig_case}.
\end{remark}

\begin{figure}[hbt!]
\centering
  \hspace{1cm}\includegraphics[width=0.9\textwidth]{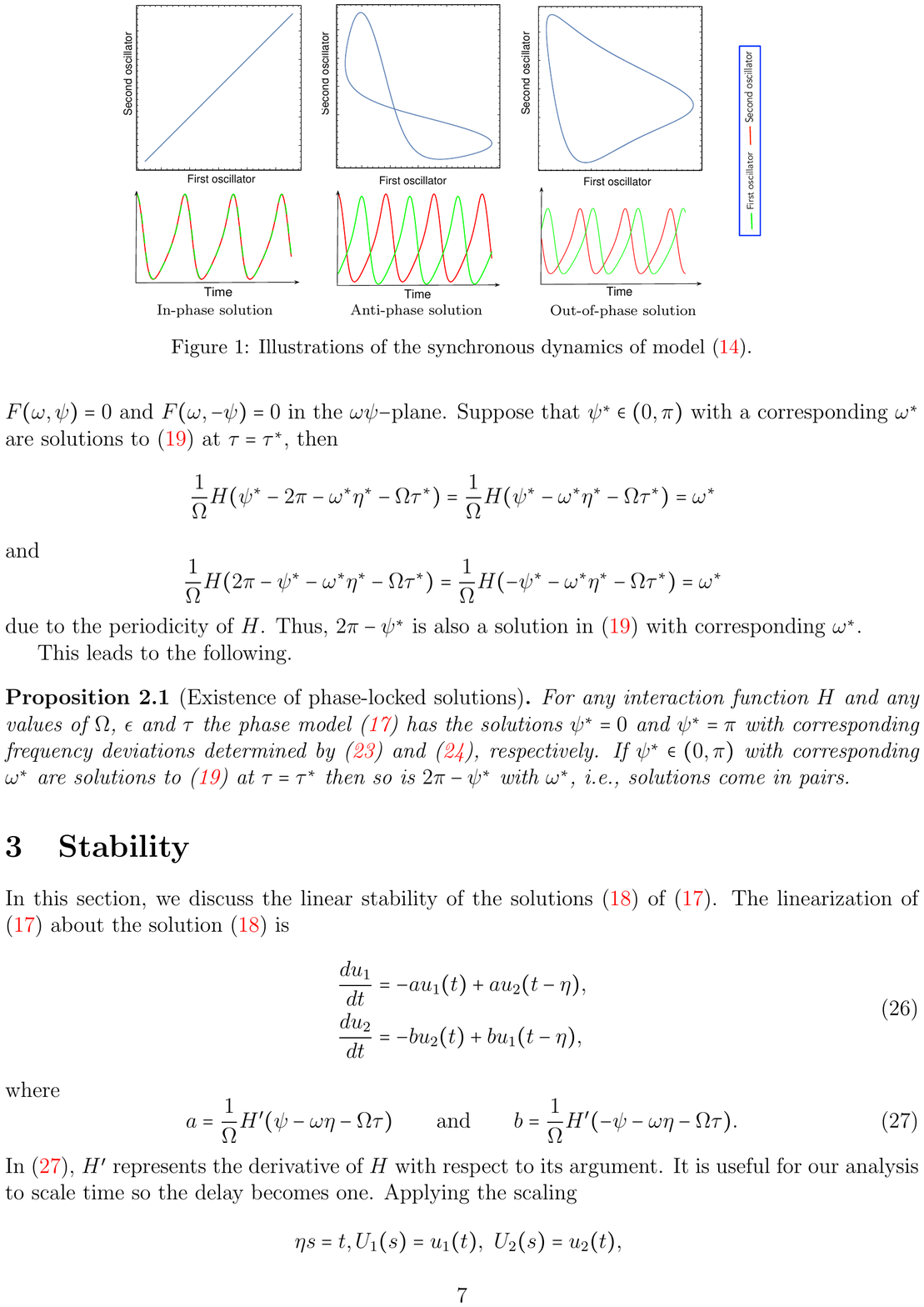}
  \caption{Illustrations of the phase-locked dynamics of model (\ref{Full_Mod}).} 
  \label{Fig_case}
\end{figure}

In fact, system (\ref{Full_Mod}) could have other phase-locked solutions (neither in-phase nor anti-phase) corresponding to the solutions $\psi$  of (\ref{sol_sys}) such that $\psi\notin \{0,\pi\}$. 
As in \cite{Scholarpedia1}, we will refer to these solutions of (\ref{Full_Mod})  as \textbf{out-of-phase} solutions. Let  
$(\omega^*,\psi^*)$  be a solution of (\ref{sol_sys}) at $\tau=\tau^*$ such that $\psi^*\notin \{0,\pi\}$.  Then $\omega^*$  and $\psi^*$ satisfy \eqref{Fsys}, that is,
$(\omega^*,\psi^*)$  is an intersection point of the contours $F(\omega,\psi)=0$ and $F(\omega,-\psi)=0$ in the $\omega\psi-$plane. 
 Suppose that $\psi^*\in (0,\pi)$ with a corresponding $\omega^*$ are solutions to (\ref{sol_sys}) at $\tau=\tau^*$, then
  \beqnn
\frac{1}{\Omega} H(\psi^*-2\pi-\omega^*\eta^*-\Omega\tau^*)&=\frac{1}{\Omega} H(\psi^*-\omega^*\eta^*-\Omega\tau^*)=\omega^*
\eeqnn
and
 \beqnn
\frac{1}{\Omega} H(2\pi-\psi^*-\omega^*\eta^*-\Omega\tau^*)&=\frac{1}{\Omega} H(-\psi^*-\omega^*\eta^*-\Omega\tau^*)=\omega^*
\eeqnn
due to the periodicity of $H$. Thus, $2\pi-\psi^*$ is also a solution in (\ref{sol_sys}) with corresponding $\omega^*$. 

This leads to the following.
\begin{proposition}[\textup{Existence of phase-locked solutions}]
For any interaction function $H$ and any values of $\Omega$, $\epsilon$ and $\tau$ 
the phase model (\ref{phase_Mod}) has the solutions $\psi^*=0$ and  $\psi^*=\pi$ with corresponding frequency deviations determined by (\ref{omega0}) and (\ref{omegapi}), respectively. 
If $\psi^*\in(0,\pi)$ with corresponding $\omega^*$ are solutions to (\ref{sol_sys}) at $\tau=\tau^*$ then so is $2\pi-\psi^*$ with $\omega^*$, i.e., solutions come in pairs.
\label{prop_1}
\end{proposition}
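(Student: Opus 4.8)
The plan is to treat the two assertions separately, using only the $2\pi$-periodicity and continuity of $H$; in fact both claims can essentially be read off from the structure of \eqref{sol_sys}, and the paragraphs preceding the statement already contain the key manipulations.

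For the existence of $\psi^*=0$ and $\psi^*=\pi$, I would first subtract the two equations of \eqref{sol_sys} to obtain the single scalar relation \eqref{eq_H} in $\psi$. Evaluating at $\psi=0$ makes the two arguments of $H$ identical, while evaluating at $\psi=\pi$ makes them differ by exactly $2\pi$; in either case $2\pi$-periodicity forces the left-hand side of \eqref{eq_H} to vanish, so both values solve it for every choice of $\omega$, $\Omega$, $\epsilon$, and $\tau$. The step that needs care is that a root of \eqref{eq_H} only guarantees the two equations of \eqref{sol_sys} agree with \emph{each other}, not that a common $\omega$ exists. I would therefore observe that at $\psi=0$ both equations collapse to \eqref{omega0}, and at $\psi=\pi$ both collapse to \eqref{omegapi}, once more invoking periodicity to identify $H(-\pi-\,\cdot\,)$ with $H(\pi-\,\cdot\,)$; in each case the system genuinely reduces to a single equation in $\omega$, so the problem is not overdetermined.

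It then remains to solve \eqref{omega0} and \eqref{omegapi}. Since $H$ is continuous and $2\pi$-periodic it is bounded, say $|H|\le M$, so the map $\omega\mapsto F(\omega,0)=\omega-\tfrac{1}{\Omega}H(-\omega\eta-\Omega\tau)$ is continuous and satisfies $F(\omega,0)\to+\infty$ as $\omega\to+\infty$ and $F(\omega,0)\to-\infty$ as $\omega\to-\infty$, the linear term dominating the bounded nonlinearity. The intermediate value theorem then yields at least one root, and the identical argument applies to \eqref{omegapi}. This gives existence while deliberately stopping short of uniqueness, which is consistent with the earlier remark, via the sign of $F_\omega$ in \eqref{Fp0def}, that multiple roots can appear once $\tau$ is large.

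For the pairing statement I would begin from a solution $(\omega^*,\psi^*)$ with $\psi^*\in(0,\pi)$, which by \eqref{sol_sys} satisfies $\tfrac{1}{\Omega}H(\psi^*-\omega^*\eta^*-\Omega\tau^*)=\omega^*=\tfrac{1}{\Omega}H(-\psi^*-\omega^*\eta^*-\Omega\tau^*)$. Substituting $\psi=2\pi-\psi^*$ into \eqref{sol_sys} and shifting each argument of $H$ by $\mp2\pi$ via periodicity simply interchanges the two equations: the first equation at $2\pi-\psi^*$ becomes the second at $\psi^*$, and vice versa. Hence $(\omega^*,2\pi-\psi^*)$ again solves \eqref{sol_sys}, and since $\psi^*\in(0,\pi)$ gives $2\pi-\psi^*\in(\pi,2\pi)$ the two are distinct out-of-phase solutions, establishing that they occur in pairs. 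I expect no real obstacle in this part; the whole argument is the periodicity bookkeeping already displayed just before the statement, and the only mild subtlety across the whole proposition is the collapse-to-a-single-$\omega$-equation point noted above, which is precisely where periodicity does the essential work.
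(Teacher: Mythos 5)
Your proposal is correct and follows essentially the same route as the paper: subtracting the equations of (\ref{sol_sys}) to get (\ref{eq_H}), using $2\pi$-periodicity to verify $\psi^*=0,\pi$, invoking continuity and boundedness of $H$ (via the intermediate value theorem) to produce at least one root $\omega^*$ of (\ref{omega0}) and (\ref{omegapi}), and obtaining the pair $2\pi-\psi^*$ by the periodicity shift that interchanges the two equations. Your explicit check that at $\psi^*=0,\pi$ both equations of (\ref{sol_sys}) collapse to a single equation in $\omega$ is a worthwhile clarification of a step the paper leaves implicit, but it is not a different argument.
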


\section{Stability }
	\label{sec_stability}
	
In this section, we discuss the linear stability of the solutions (\ref{phases}) of  (\ref{phase_Mod}). The linearization of (\ref{phase_Mod}) about the solution  (\ref{phases}) is
\beqn
\label{LinSys}
\frac{{d{u_1}}}{{dt}} &=  - a{u_1}(t) + a{u_2}(t - \eta ),\\
\frac{{d{u_2}}}{{dt}} &=  - b{u_2}(t) + b{u_1}(t - \eta ),
\eeqn
where 
\begin{equation}
\label{ab}
a=\frac{1}{\Omega} H'(\psi-\omega\eta-\Omega\tau)\qquad\text{and}\qquad b=\frac{1}{\Omega} H'(-\psi-\omega\eta-\Omega\tau).
\end{equation}
In (\ref{ab}), $H'$ represents the derivative of $H$ with respect to its argument. It is 
useful for our analysis to scale time so the delay becomes one. Applying the scaling
\[ \eta s=t,U_1(s)=u_1(t),\ U_2(s)=u_2(t),\ \]
results in 
\beqn
	\label{LinSys_Scaled}
\frac{{d{U_1}}}{{ds}} &=  - \eta a {U_1}(s) + \eta a{U_2}(s - 1 ),\\
\frac{{d{U_2}}}{{ds}} &=  - \eta b{U_2}(s) +  \eta b{U_1}(s -1 ).
\eeqn
It follows that the corresponding characteristic equation is 
\begin{equation}\label{chactEq}
	\Delta (\lambda ;\eta ) = {\lambda ^2} + \eta(a + b)\lambda  + \eta^2 ab - \eta^2 ab{e^{ - 2\lambda  }}=0.
\end{equation}

In the following we study the distribution of roots of this equation.
\begin{proposition}\label{prop000A}
	Assume $ab=0$. Then  $\Delta(\lambda;\eta)$ has:  
	\begin{enumerate}
		\item [i.] One positive root and one zero root when $a+b<0$;
		\item [ii.] Two zero roots when $a+b=0$;
		\item [iii.] One negative root and one zero root when $a+b>0$.
		\end{enumerate}
\end{proposition}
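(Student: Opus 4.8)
The plan is to exploit the hypothesis $ab=0$ to collapse the transcendental characteristic equation \eqref{chactEq} into an elementary degree-two polynomial, after which the three cases follow by inspection. The first and decisive step is to substitute $ab=0$ directly into \eqref{chactEq}. Observe that the delay enters \eqref{chactEq} only through the coefficient $\eta^2 ab$, which multiplies both the constant term and the exponential term $e^{-2\lambda}$. Hence when $ab=0$ both of these terms vanish simultaneously, and the equation reduces to
\[
\Delta(\lambda;\eta) = \lambda^2 + \eta(a+b)\lambda = \lambda\big[\lambda + \eta(a+b)\big] = 0,
\]
which is a genuine polynomial with exactly the two roots $\lambda = 0$ and $\lambda = -\eta(a+b)$. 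Because the exponential branch has been eliminated, there are no additional roots to account for, and the total count of two roots (with multiplicity) is complete.

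Next I would record the sign of $\eta$. Since $\eta = \epsilon\Omega\tau$ with $\epsilon$, $\Omega$ and $\tau$ all strictly positive under the standing assumptions on the model \eqref{Full_Mod}, we have $\eta>0$. Consequently the nonzero root $-\eta(a+b)$ carries the sign opposite to that of $a+b$. The three assertions then follow immediately: when $a+b<0$ the second root is $-\eta(a+b)>0$, yielding one positive and one zero root; when $a+b=0$ the second root coincides with the first, yielding a double zero root; and when $a+b>0$ the second root is $-\eta(a+b)<0$, yielding one negative and one zero root.

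The result is essentially immediate once the collapse of \eqref{chactEq} is recognized, so there is no substantial obstacle here. The only point requiring genuine care is verifying that $\eta$ is strictly positive, since multiplying $a+b$ by $\eta$ must neither flip nor annihilate its sign; this is exactly what the positivity of $\epsilon$, $\Omega$ and $\tau$ guarantees. I would therefore present the argument as a short factorization followed by the sign table for $-\eta(a+b)$, emphasizing that the hypothesis $ab=0$ is precisely what removes the delay-induced transcendence and makes the spectrum finite in this degenerate case.
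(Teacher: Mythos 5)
Your proposal is correct and takes essentially the same route as the paper: substituting $ab=0$ collapses \eqref{chactEq} to the polynomial $\lambda^2+\eta(a+b)\lambda=0$, whose roots $0$ and $-\eta(a+b)$ give the three cases. The paper's proof states this reduction and says ``the result follows''; your write-up simply makes explicit the factorization and the sign bookkeeping via $\eta=\epsilon\Omega\tau>0$ that the paper leaves implicit.
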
	
\begin{proof} The characteristic equation in this case reduces to
\[ {\lambda ^2} + \eta(a + b)\lambda  =0. \]
The result follows.
\end{proof}

\begin{proposition}
	$\Delta(\lambda;\eta)$ has a positive real root when one of the following holds.
	\begin{enumerate}
		\item [i.] $ab>0$ and $a+b<0$;
		\item [ii.] $ab< 0$ and $a+b\le0$;
		\item [iii.]  $ab<0$, $a+b>0$ and $a+b+2\eta a b<0$.
		\end{enumerate}
\end{proposition}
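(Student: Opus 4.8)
The plan is to reduce everything to a single application of the intermediate value theorem on the real axis. Restrict the characteristic function to real arguments and set
\[
g(\lambda) := \Delta(\lambda;\eta) = \lambda^2 + \eta(a+b)\lambda + \eta^2 ab\bigl(1 - e^{-2\lambda}\bigr), \qquad \lambda \in \mathbb{R}.
\]
Two facts hold irrespective of the signs of $a,b$, and they frame the whole argument. First, $g(0)=0$, since the constant term $\eta^2 ab$ and the exponential term cancel at $\lambda=0$. Second, because $\eta>0$ (recall $\eta=\epsilon\Omega\tau>0$) the quadratic term dominates while $e^{-2\lambda}\to 0$, so $g(\lambda)\to+\infty$ as $\lambda\to+\infty$. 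Consequently a positive real root is guaranteed the moment I can exhibit a \emph{single} point $\lambda_0>0$ with $g(\lambda_0)<0$: the intermediate value theorem then produces a root of $g$ in $(\lambda_0,\infty)\subset(0,\infty)$.

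For cases (ii) and (iii) I will find such a point infinitesimally close to the origin by differentiating. A direct computation gives
\[
g'(0) = \eta\bigl[(a+b) + 2\eta ab\bigr].
\]
In case (iii) the hypothesis $a+b+2\eta ab<0$ makes $g'(0)<0$ immediately. In case (ii) the hypothesis $ab<0$ forces $2\eta ab<0$, and together with $a+b\le 0$ this again yields $(a+b)+2\eta ab<0$, hence $g'(0)<0$. In both situations $g(0)=0$ with $g'(0)<0$ means $g$ is strictly negative on some interval $(0,\delta)$; choosing any $\lambda_0$ there and combining with $g(+\infty)=+\infty$ closes these two cases.

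Case (i) is the one I expect to be the main obstacle, because there $ab>0$ and $a+b<0$ force $a<0$ and $b<0$, so $2\eta ab>0$ and the sign of $g'(0)$ is genuinely ambiguous—the slope-at-the-origin argument can fail. The device that rescues the proof is to factor the polynomial part of $g$,
\[
g(\lambda) = (\lambda + \eta a)(\lambda + \eta b) - \eta^2 ab\, e^{-2\lambda}.
\]
Since $a<0$, the point $\lambda_0 := -\eta a$ is strictly positive and annihilates the first factor, leaving $g(\lambda_0) = -\eta^2 ab\, e^{-2\lambda_0}<0$ because $ab>0$. With $\lambda_0>0$ and $g(\lambda)\to+\infty$, the intermediate value theorem delivers a positive root exactly as before. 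Thus all three cases collapse to the same mechanism—locate one positive $\lambda_0$ with $g(\lambda_0)<0$—with the factorization supplying that point precisely in the case where the derivative test is inconclusive.
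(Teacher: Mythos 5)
Your proof is correct, and it is in essence the paper's argument packaged more cleanly. The paper rewrites $\Delta(\lambda;\eta)=0$ as the intersection problem $f(\lambda)=g(\lambda)$ with $f(\lambda)=(\lambda+\eta a)(\lambda+\eta b)$ and $g(\lambda)=\eta^2 ab\,e^{-2\lambda}$, and in each case produces a crossing via monotonicity of the two curves (supported by figures), whereas you apply the intermediate value theorem once to the single function $\Delta$ restricted to the real axis, using $\Delta(\lambda)\to+\infty$ together with one strategically chosen point where $\Delta<0$. The key evaluations coincide: in case (i) the paper evaluates at $-\eta b$ (with $b<a<0$ taken WLOG), where $f$ vanishes and $g>0$, which is exactly your evaluation at a root of the quadratic factor; in case (iii) the paper's inequality $f'(0)<g'(0)$ is precisely your condition $\Delta'(0)<0$. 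Where you genuinely differ is case (ii): the paper treats $a+b<0$ and $a+b=0$ by separate monotonicity arguments, while your single derivative criterion $\Delta'(0)=\eta\bigl[(a+b)+2\eta ab\bigr]<0$ covers all of case (ii) and case (iii) uniformly. What your route buys is economy: no monotonicity bookkeeping, no sub-splitting of case (ii), and no reliance on figures. What the paper's route buys is localization: it places the root in explicit intervals such as $\bigl(0,-\eta b\bigr)$ or $\bigl(-\tfrac{a+b}{2}\eta,-\eta b\bigr)$, which your pure existence argument does not provide — though nothing in the statement of the proposition requires that extra information.
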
	
	
\begin{proof} Define
\begin{equation}
	\label{f&g}
	f(\lambda)=(\lambda+\eta a)(\lambda+\eta b) \quad \text{and}
	\quad g(\lambda)=\eta^2 a b e^{-2\lambda}.
\end{equation} 
Then $f(0)=g(0)=\eta^2 a b$ and 
\begin{equation}
\label{eq1543}
	\Delta(\lambda;\eta)=0 \quad \iff \quad f(\lambda)= g(\lambda).
\end{equation}
	\begin{enumerate}
	\item [i.] It follows from $ab>0$ and $a+b<0$ that $a<0$ and $b<0$. Since (\ref{chactEq}) is symmetric in $a$ and $b$, without loss of generality, we may assume $b<a<0$. 
Note that $f(-\eta b)=0<g(-\eta b)$. Since $f$ is positive and increasing for $\lambda>-\eta b>0$ and $g$ is positive and decreasing for $\lambda>0$, there exists $\lambda^*>-\eta b$ such that $f(\lambda^*)=g(\lambda^*)$, see Figure \ref{Fig0_A}.

	\item[ii.] Assume $a> 0$ and $b<0$. When $a+b<0$, $f$ is decreasing for $\lambda\in \left( 0,-\frac{a+b}{2}\eta\right)$ and is increasing for $\lambda> -\frac{a+b}{2}\eta$. Further, $g$ increases for $\lambda> 0$ and $\lim_{\lambda\to \infty}g(\lambda)=0$, thus there exists $\lambda^*\in \left( -\frac{a+b}{2}\eta,-\eta b\right)$ such that $f(\lambda^*)=g(\lambda^*)$, see Figure \ref{Fig0_B}.
	When $a+b=0$, $f(0)=g(0)=\eta^2 a b$, $f'(0)=0<g'(0)$ and $f$ is increasing for $\lambda> 0$. Thus, with the same arguments, $\lambda^*$ lies in  $\left(0,-\eta b\right)$.
	
		\item[iii.] Assume $a>0$ and $b<0$. In this case $f$ and $g$ are increasing for $\lambda>0$ and $g<0$ for $\lambda\ge 0$. Since  $f'(0)=\eta(a+b)<-2\eta^2 a b=g'(0)$, then there exists $\lambda^*\in (0,-\eta b)$ such that $f(\lambda^*)=g(\lambda^*)$, see Figure \ref{Fig0_C}.
	\end{enumerate}
\end{proof}

 \begin{figure}[hbt!]
  \begin{subfigure}[t]{0.32\textwidth}
    \includegraphics[width=1\textwidth]{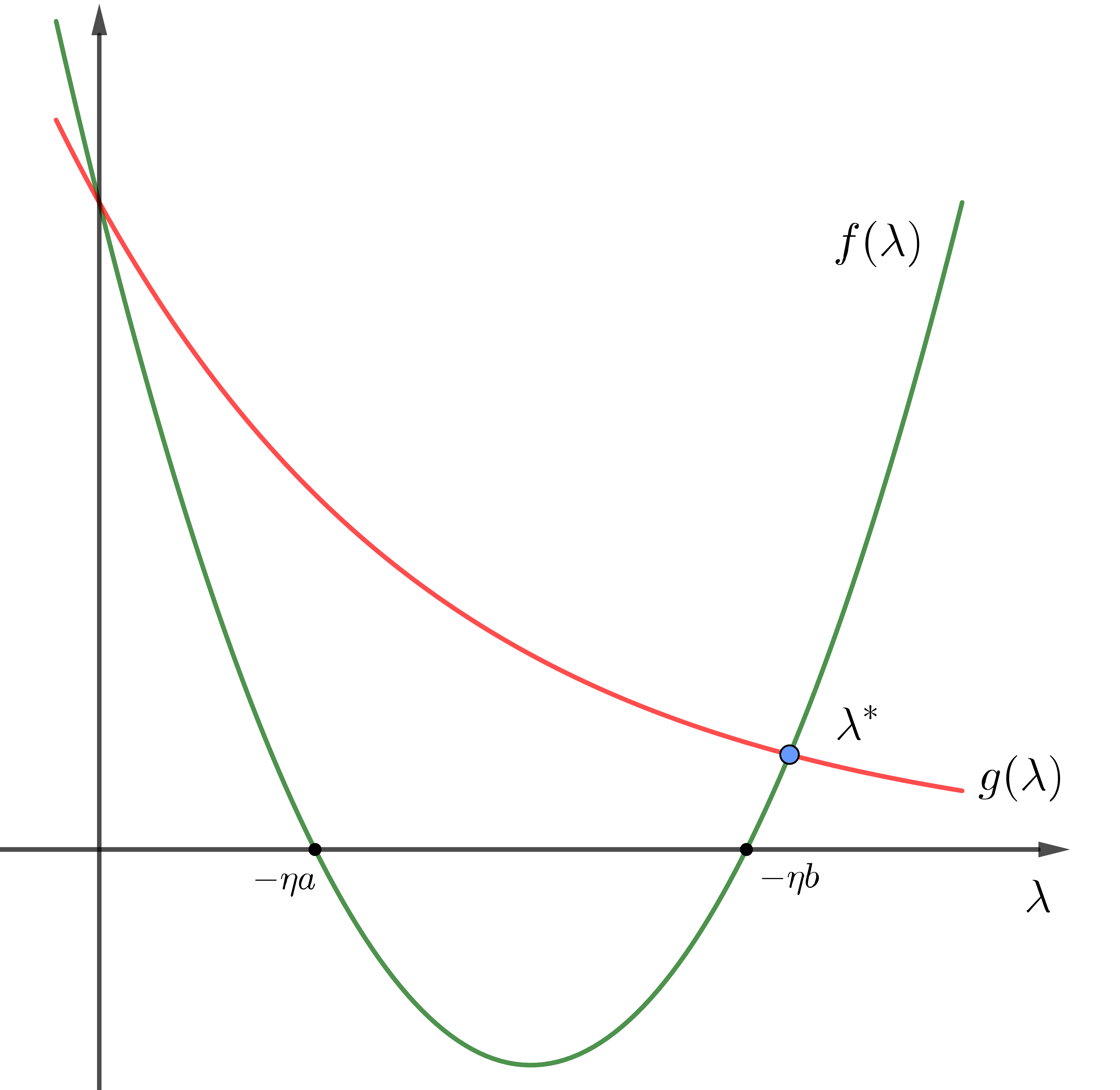}
    \caption{$ab>0$ and $a+b<0$.}
    \label{Fig0_A}
  \end{subfigure}\hfill
  \begin{subfigure}[t]{0.32\textwidth}
    \includegraphics[width=0.80\textwidth]{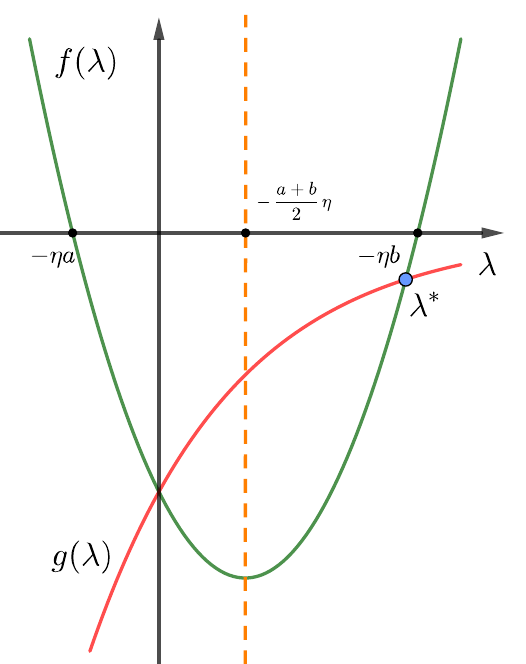}
    \caption{$ab<0$ and $a+b<0$.}
    \label{Fig0_B}
  \end{subfigure}\hfill
  \begin{subfigure}[t]{0.32\textwidth}
    \includegraphics[width=0.90\textwidth]{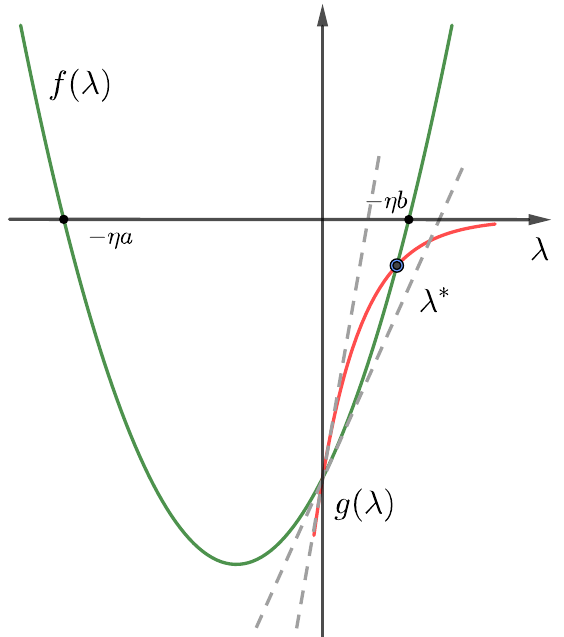}
    \caption{$ab<0$ and $a+b>0$.}
    \label{Fig0_C}
  \end{subfigure}
  \caption{Positive real roots in $\Delta(\lambda;\eta)=0$. } 
  \label{Fig0}
\end{figure}

 \begin{proposition}
	When $ab>0$ and $a+b>0$, $\Delta(\lambda;\eta)$ has no roots with positive real part.
	\end{proposition}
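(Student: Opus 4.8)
The plan is to exploit the factorization already introduced in (\ref{f&g}), namely $f(\lambda)=(\lambda+\eta a)(\lambda+\eta b)$ and $g(\lambda)=\eta^2 ab\, e^{-2\lambda}$, together with the equivalence (\ref{eq1543}) stating that $\Delta(\lambda;\eta)=0$ if and only if $f(\lambda)=g(\lambda)$. First I would record the sign information: since $ab>0$ and $a+b>0$, both $a>0$ and $b>0$, and also $\eta=\epsilon\Omega\tau>0$. The entire argument then reduces to a direct pointwise comparison of the magnitudes of $f$ and $g$ in the open right half-plane, with no need for a contour or Rouch\'e-type count.

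The key step is a pair of bounds valid for every $\lambda=x+iy$ with $x=\operatorname{Re}(\lambda)>0$. Since $|\lambda+\eta a|=\sqrt{(x+\eta a)^2+y^2}\ge x+\eta a>\eta a$, and likewise $|\lambda+\eta b|>\eta b$ (each factor is strictly larger because $x>0$ and $a,b,\eta>0$), I obtain the strict lower bound
\[
|f(\lambda)| = |\lambda+\eta a|\,|\lambda+\eta b| > \eta^2 ab.
\]
On the other hand, $|g(\lambda)|=\eta^2 ab\,|e^{-2\lambda}|=\eta^2 ab\,e^{-2x}<\eta^2 ab$ since $x>0$. Combining the two estimates gives $|f(\lambda)|>\eta^2 ab>|g(\lambda)|$ throughout the open right half-plane, so $f(\lambda)\ne g(\lambda)$ there, and hence $\Delta(\lambda;\eta)$ has no root with positive real part.

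I do not expect a genuine obstacle here; the only points requiring care are (i) confirming that $x>0$ forces both magnitude inequalities to be strict simultaneously, so that the bounds cannot collapse to equality anywhere in the open half-plane, and (ii) observing that the estimate is sharp precisely on the imaginary axis---indeed $f(0)=g(0)=\eta^2 ab$, so $\lambda=0$ is always a root of $\Delta$, consistent with the phase-shift invariance of (\ref{phase_Mod}). Since the proposition asserts only the absence of roots with \emph{strictly} positive real part, this boundary root is harmless and need not be treated further.
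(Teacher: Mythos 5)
Your proposal is correct and follows essentially the same argument as the paper's own proof: the identical factorization $f(\lambda)=(\lambda+\eta a)(\lambda+\eta b)$, $g(\lambda)=\eta^2 ab\,e^{-2\lambda}$, and the same pointwise magnitude comparison $|f(\lambda)|>\eta^2 ab>|g(\lambda)|$ for $\operatorname{Re}(\lambda)>0$. The only cosmetic difference is that the paper phrases it as a contradiction (assuming a root exists and deriving $|f(\lambda^*)|=|g(\lambda^*)|$), whereas you state the non-intersection directly.
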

\begin{proof}
Since $ab>0$ and $a+b>0$, we have $a>0$ and $b>0$. Assume there is a root $\lambda^*=x+iy$ of $\Delta(\lambda;\eta)=0$ with $x>0$. Then, it follows from (\ref{f&g}) and (\ref{eq1543}) that
\begin{equation}
	|f(\lambda^*)|=|g(\lambda^*)|.
	\label{eq1201}
\end{equation}
Notice that, due \review{to} the positivity of $x$ we get
\[\left| {f\left( {{\lambda ^*}} \right)} \right| = \sqrt {{{(x + \eta a)}^2} + {y^2}} \sqrt {{{(x + \eta b)}^2} + {y^2}}  > {\eta ^2}ab\]
and
\[\left| {g\left( {{\lambda ^*}} \right)} \right| = {\eta ^2}ab{e^{ - 2x}} < {\eta ^2}ab.\]
Hence, $|f(\lambda^*)|>|g(\lambda^*)|$, which contradicts (\ref{eq1201}). Thus, all roots of $\Delta(\lambda;\eta)=0$ have nonpositive real parts when $ab>0$ and $a+b>0$.
\end{proof}
	
\begin{proposition}
\label{prop_zero_root}
$\lambda=0$ is a root of (\ref{chactEq}) for any $\eta$. If $\eta\neq \eta^*:=-\frac{a+b}{2ab}$ then  $\lambda=0$ is a simple root. Otherwise, it is a double root. The double multiplicity of $\lambda=0$ occurs only in the following cases. 
\begin{enumerate}
	\item [i.] $ab>0$ and $a+b<0$;
	\item [ii.] $ab<0$ and $a+b>0$.
\end{enumerate}

\end{proposition}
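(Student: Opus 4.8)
The plan is to treat $\Delta(\cdot\,;\eta)$ as an entire function of $\lambda$ for fixed coefficients $a,b$ with $ab\neq0$ (the degenerate case $ab=0$ being already covered by Proposition~\ref{prop000A}), and to read off the multiplicity of the root at the origin from the vanishing of successive $\lambda$-derivatives. First I would verify by direct substitution that $\lambda=0$ is always a root: since $e^{0}=1$,
\begin{equation*}
\Delta(0;\eta)=\eta^2 ab-\eta^2 ab\,e^{0}=0,
\end{equation*}
independently of $\eta$, $a$, and $b$. This establishes the first assertion.

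Next I would determine the multiplicity by computing $\partial_\lambda\Delta$ and evaluating at the origin. Differentiating (\ref{chactEq}) gives
\begin{equation*}
\partial_\lambda\Delta(\lambda;\eta)=2\lambda+\eta(a+b)+2\eta^2 ab\,e^{-2\lambda},
\end{equation*}
so that $\partial_\lambda\Delta(0;\eta)=\eta\big[(a+b)+2\eta ab\big]$. Because $\eta=\epsilon\Omega\tau>0$ and $ab\neq0$, this vanishes precisely when $(a+b)+2\eta ab=0$, i.e.\ when $\eta=\eta^*:=-\tfrac{a+b}{2ab}$. Hence $\lambda=0$ is a simple root whenever $\eta\neq\eta^*$ and is a root of multiplicity at least two when $\eta=\eta^*$. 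To confirm the multiplicity is exactly two at $\eta=\eta^*$, I would compute the second derivative $\partial_\lambda^2\Delta(\lambda;\eta)=2-4\eta^2 ab\,e^{-2\lambda}$ and substitute $\lambda=0$, $\eta=\eta^*$; using $(\eta^*)^2 ab=\tfrac{(a+b)^2}{4ab}$ this reduces to
\begin{equation*}
\partial_\lambda^2\Delta(0;\eta^*)=2-\frac{(a+b)^2}{ab}=-\frac{a^2+b^2}{ab},
\end{equation*}
which is nonzero since $ab\neq0$ forces $a^2+b^2>0$. Thus the origin is a double root exactly when $\eta=\eta^*$, proving the second assertion.

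Finally I would identify which sign patterns of $a,b$ actually realize this double root. Since $\eta$ is a positive physical quantity, the critical value $\eta^*$ is attainable only when $\eta^*>0$, i.e.\ when $-\tfrac{a+b}{2ab}>0$, equivalently when $a+b$ and $ab$ have opposite signs. Enumerating the sign combinations, $\eta^*>0$ holds exactly in the two cases $ab>0,\ a+b<0$ and $ab<0,\ a+b>0$, while the complementary cases give $\eta^*\le0$ and hence no admissible double root. This yields the two listed cases. I expect the only delicate point to be this last bookkeeping step: one must retain the admissibility constraint $\eta>0$, without which $\eta^*$ would appear to produce a double root for \emph{every} sign pattern, and the earlier exclusion of $ab=0$ is precisely what guarantees $\eta^*$ is well defined throughout.
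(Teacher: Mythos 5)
Your proposal is correct and follows essentially the same route as the paper's proof: verify $\Delta(0;\eta)=0$ directly, compute $\Delta'(0;\eta)=\eta\left(a+b+2ab\eta\right)$ to locate the critical value $\eta^*$, confirm $\Delta''(0;\eta^*)=-\frac{a^2+b^2}{ab}\neq 0$ so the root is exactly double, and observe that $\eta^*>0$ forces $ab$ and $a+b$ to have opposite signs. Your explicit handling of the admissibility constraint $\eta>0$ and the exclusion of $ab=0$ makes the same argument slightly more careful than the paper's, but it is not a different method.
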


\begin{proof}
It is clear that $\Delta(0;\eta)=0$ and 
$\Delta'(0;\eta)=\eta\left(a+b+2ab\eta\right)$
where $'$ is the derivative with respect to $\lambda$. If $\eta\neq \eta^*$ then $\Delta'(0;\eta)\neq0$, and hence  $\lambda=0$ is a simple root.  When $\eta=\eta^*$, we have $\Delta'(0;\eta^*)=0$ and  
\[\Delta''(0;\eta^*)=-\frac{a^2+b^2}{ab}\neq 0.\]
Thus, $\lambda=0$ has double multiplicity. 

\noindent It is clear that $\eta^*$ exists if and only if 
	\[-\frac{a+b}{2ab}>0\iff \{ab>0\ \text{and}\ a+b<0\}\ \text{or}\ \{ab<0\ \text{and}\ a+b>0\}.\]
\end{proof}

  \begin{proposition}\label{prop_new1}
	When $ab<0$, $a+b>0$ and $a+b+2\eta ab\ge 0$,  $\Delta(\lambda;\eta)$ has no roots with positive real part.
\end{proposition}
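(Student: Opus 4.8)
The plan is to rule out open right half plane roots by a homotopy (root–continuation) argument in the delay parameter $\eta$, using the fixed root $\lambda=0$ as the only possible gateway to instability. The first step is to show that, when $ab<0$, the characteristic function $\Delta(\cdot;\eta)$ has \emph{no} purely imaginary root other than $\lambda=0$, for every $\eta$. Substituting $\lambda=iy$ and separating real and imaginary parts, the real part reads $-y^2+\eta^2 ab\,(1-\cos 2y)=0$. Since $1-\cos 2y=2\sin^2 y\ge 0$ and $ab<0$, the quantity $\eta^2 ab(1-\cos 2y)$ is nonpositive, so $-y^2\ge 0$, forcing $y=0$. Thus the imaginary axis meets the spectrum only at $\lambda=0$, which by Proposition \ref{prop_zero_root} is simple unless $\eta=\eta^*=-\tfrac{a+b}{2ab}$, where it is double.

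Next I would fix a reference value of $\eta$ at which there are no unstable roots, and confine the unstable roots to a compact set. For $\Re\lambda\ge 0$ one has $|e^{-2\lambda}-1|\le 2$, so every such root obeys $|\lambda^2+\eta(a+b)\lambda|\le 2\eta^2|ab|$, giving a bound $|\lambda|\le C(\eta)$ with $C(\eta)\to 0$ as $\eta\to 0^{+}$. Hence as $\eta\to 0^{+}$ all closed right half plane roots collapse to the origin, and the local factorization $\Delta(\lambda;\eta)=\lambda\big[\lambda+\eta(a+b)+2\eta^2 ab+o(1)\big]$ near $\lambda=0$ shows the two roots there are $\lambda=0$ and $\lambda\approx-\eta(a+b+2\eta ab)$, the latter lying in the open left half plane since $a+b>0$. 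Therefore the number $N(\eta)$ of roots with $\Re\lambda>0$ equals $0$ for all sufficiently small $\eta>0$.

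Finally I would let $\eta$ increase through the admissible range. The hypotheses $ab<0$, $a+b>0$ together with $a+b+2\eta ab\ge 0$ describe exactly $\eta\in(0,\eta^*]$, because $a+b+2\eta ab$ is decreasing in $\eta$, equals $a+b>0$ at $\eta=0$, and vanishes at $\eta^*$. On this interval the unstable roots are uniformly bounded and depend continuously on $\eta$, so the integer $N(\eta)$ can change only when a root touches the imaginary axis. By the first step this can happen only at $\lambda=0$, and a root distinct from the permanently pinned simple root $\lambda=0$ can reach the origin only where $\lambda=0$ becomes a double root, i.e.\ at $\eta=\eta^*$. Hence no transition occurs for $\eta\in(0,\eta^*)$, so $N(\eta)\equiv 0$ there; and at the endpoint $\eta=\eta^*$ the additional root sits exactly at the origin, not in the open right half plane, so $N(\eta^*)=0$ as well, which is the claim.

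I expect the main obstacle to be the bookkeeping at the single transition value $\eta=\eta^*$: one must confirm that the second root reaches the origin from the left (consistent with the companion proposition, where $a+b+2\eta ab<0$, i.e.\ $\eta>\eta^*$, produces a genuine positive real root) rather than from the right, and that the double root at $\eta^*$ does not itself already carry a root into $\Re\lambda>0$. The no-nonzero-imaginary-root fact established in the first step is what keeps this clean, since it forbids any complex-conjugate pair from crossing the imaginary axis away from the origin.
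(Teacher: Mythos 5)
Your proof is correct, but it follows a genuinely different route from the paper's. The paper disposes of the claim with a short, direct modulus estimate: it rewrites the transcendental term as $1-e^{-2\lambda}=\lambda\int_0^2 e^{-u\lambda}\,du$, assumes a root $\bar\lambda$ with $\mathrm{Re}\,\bar\lambda>0$, cancels the common factor $\bar\lambda$, and uses $ab<0$ together with $a+b+2\eta ab\ge 0$ to obtain $|\bar\lambda+\eta(a+b)|\le 2\eta^2|ab|\le \eta(a+b)$, which confines $\bar\lambda$ to the closed disk of radius $\eta(a+b)$ centred at $-\eta(a+b)$, i.e.\ to the closed left half plane — a contradiction. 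That argument treats every admissible $\eta$ at once and uses nothing beyond the triangle inequality. You instead run a continuation (homotopy) argument in $\eta$: no nonzero imaginary roots (your computation is the $ab<0$ case of Proposition \ref{prop_imag_roots}), a uniform bound on closed-right-half-plane roots, $N(\eta)=0$ for small $\eta$ via the local factorization at the origin, and constancy of $N$ on $(0,\eta^*)$ because a crossing could only occur at $\lambda=0$, which remains simple until $\eta=\eta^*$ by Proposition \ref{prop_zero_root}; Hurwitz/Rouch\'e continuity then yields $N(\eta^*)=0$ at the endpoint. Your route is heavier — in a fully written version you would need to justify continuity of quasipolynomial roots, argument-principle counting near the origin and near any putative unstable roots, and the fact that unstable roots cannot enter from infinity (which your uniform bound supplies) — but it is more structural: it explains why the hypothesis boundary $a+b+2\eta ab=0$, i.e.\ $\eta=\eta^*$, is exactly the moment a second root reaches the origin, dovetailing with the bifurcation discussion in Section \ref{bif:sec}, and it would survive modifications of $\Delta$ for which the clean integral estimate is unavailable. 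One remark on your closing worry: the direction from which the second root approaches the origin at $\eta^*$ is immaterial for this proposition — since $N\equiv 0$ on $(0,\eta^*)$, continuity of the roots alone forces $N(\eta^*)=0$; the approach from the left only matters for consistency with the companion proposition, where $\eta>\eta^*$ produces a genuine positive real root.
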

\begin{proof}
\review{Note that the characteristic equation \eqref{chactEq} can be written as 
\[{\Delta }(\lambda ;\eta )= \lambda^2+\eta(a+b)\lambda  +\eta^2 ab\int_0^2 \lambda e^{-u\lambda} du=0.\]
Suppose that ${\Delta }(\lambda ;\eta )=0$ has root $\bar{\lambda}$ with ${\rm{Re}}(\bar{\lambda})>0$. Then
\[\left| {\bar \lambda (\bar \lambda  + \eta(a+b))} \right| = \left| {{\eta ^2}ab\bar \lambda \int\limits_0^2 {{e^{ - u\bar \lambda }}du} } \right| \le {\eta ^2}\left| {ab} \right||\bar \lambda |\left| {\int\limits_0^2 {{e^{ - u({\rm{Re}}(\bar \lambda ))}}du} } \right| \le 2{\eta ^2}\left| {ab} \right||\bar \lambda |.\]
Since $ab<0$ and $a+b+2\eta ab\ge 0$, we have
\[|\bar \lambda (\bar \lambda  + \eta(a+b))| \le  - 2{\eta ^2}ab|\bar \lambda | \le \eta (a + b)|\bar \lambda |\]
which is satisfied if $\bar \lambda=0$ (a contradiction) or 
\[|\bar{\lambda}+\eta(a+b)| \le \eta(a+b).\]
This implies that $\bar{\lambda}$  is in the disk of radius $\eta ( {a + b} )$ centred at the point $-\eta ( {a + b} )$ in the complex
plane. Thus, ${\rm{Re}}(\bar{\lambda})<0$ or $\bar{\lambda}=0$. In both cases we arrive at a
contradiction.}
\end{proof}

\review{Finally, we show that (\ref{chactEq}) does not have pure imaginary roots for any value of the parameters.}
			
\begin{proposition}
\label{prop_imag_roots}
 The characteristic equation (\ref{chactEq})
has no pure imaginary roots. 
\end{proposition}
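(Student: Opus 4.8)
The plan is to read ``pure imaginary'' as $\lambda=i\sigma$ with $\sigma\in\mathbb{R}\setminus\{0\}$, since Proposition~\ref{prop_zero_root} already records that $\lambda=0$ is a root of (\ref{chactEq}) for every $\eta$; thus the genuinely pure imaginary (nonzero) case is the one that must be excluded. I would argue by contradiction, assuming a root $\lambda=i\sigma$ with $\sigma\neq 0$ exists and deriving an impossible equality of moduli.

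The cleanest route reuses the equivalence $\Delta(\lambda;\eta)=0\iff f(\lambda)=g(\lambda)$ from (\ref{eq1543}), with $f(\lambda)=(\lambda+\eta a)(\lambda+\eta b)$ and $g(\lambda)=\eta^2 ab\,e^{-2\lambda}$ as in (\ref{f&g}). Substituting $\lambda=i\sigma$ and taking moduli, the right-hand side collapses because $|e^{-2i\sigma}|=1$, giving $|g(i\sigma)|=\eta^2|ab|$, while the left-hand side factors as $|f(i\sigma)|=\sqrt{\sigma^2+\eta^2 a^2}\,\sqrt{\sigma^2+\eta^2 b^2}$. Each factor obeys $\sqrt{\sigma^2+\eta^2 a^2}\ge \eta|a|$ and $\sqrt{\sigma^2+\eta^2 b^2}\ge \eta|b|$, with equality only when $\sigma=0$.

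The key step is that $\sigma\neq 0$ makes both inequalities strict regardless of whether $a$ or $b$ vanishes, so $|f(i\sigma)|>\eta|a|\cdot\eta|b|=\eta^2|ab|=|g(i\sigma)|$. This contradicts the equality $|f(i\sigma)|=|g(i\sigma)|$ forced by $\Delta(i\sigma;\eta)=0$, which finishes the proof. Conceptually there is no real obstacle here; the work is entirely in the bookkeeping.

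On that bookkeeping, I would dispose of the degenerate case $\eta=0$ separately, where (\ref{chactEq}) reduces to $\lambda^2=0$ with only the double root $\lambda=0$, and I would note the sign convention $\eta=\epsilon\Omega\tau\ge 0$ so that $\eta|a|=|\eta a|$. An alternative, more computational approach separates $\Delta(i\sigma;\eta)$ into real and imaginary parts, yielding $\sigma^2=2\eta^2 ab\sin^2\sigma$ together with $\eta(a+b)\sigma+\eta^2 ab\sin 2\sigma=0$; the first relation already forces $ab>0$, but eliminating $\sigma$ between the two equations is messier and case-dependent, so I would relegate it to a remark and present the modulus argument as the main proof.
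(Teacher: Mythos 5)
Your proof is correct, and the edge cases ($\sigma<0$, $a$ or $b$ equal to zero, $\eta=0$) are all handled properly; the strict inequalities $\sqrt{\sigma^2+\eta^2a^2}>\eta|a|$ and $\sqrt{\sigma^2+\eta^2b^2}>\eta|b|$ for $\sigma\neq0$ do indeed give $|f(i\sigma)|>\eta^2|ab|=|g(i\sigma)|$, contradicting $f(i\sigma)=g(i\sigma)$. Interestingly, the paper's own proof is exactly the argument you relegate to a closing remark: substitute $\lambda=iy$, separate real and imaginary parts, then square and add. You predict that eliminating $\sigma$ this way is ``messier and case-dependent,'' but it is not: squaring and adding the two equations is precisely computing $|f(iy)|^2=|g(iy)|^2$, and the algebra collapses, with no case analysis at all, to $y^2\left(y^2+\eta^2(a^2+b^2)\right)=0$, which has no nonzero real solutions since $a^2+b^2\ge 0$. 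So your main proof and the paper's are two packagings of the same comparison of moduli along the imaginary axis: you bound the factored form $|i\sigma+\eta a|\,|i\sigma+\eta b|$ term by term, while the paper expands the squared modulus into a polynomial identity. What your version buys is direct reuse of the decomposition (\ref{f&g})--(\ref{eq1543}) already exploited in the positive-real-root propositions, plus an explicit treatment of the degenerate cases; what the paper's version buys is that the elimination you feared happens automatically in one line. Your reading of ``pure imaginary'' as $\lambda=i\sigma$ with $\sigma\neq 0$ also matches the paper's intent (it takes $y>0$, which suffices by conjugate symmetry of the real-coefficient equation (\ref{chactEq})), and correctly accounts for the ever-present zero root of Proposition \ref{prop_zero_root}.
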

\begin{proof}
Assume $\lambda=iy$ ($y>0$) is a root of (\ref{chactEq}). 
Separating the real and imaginary parts, we obtain 
\begin{align*}
   \eta^2ab-y^2&=\eta^2ab\cos(2y)\\
\eta(a+b)y&=-\eta^2ab\sin(2y)
\end{align*}
Squaring and adding these equations leads to
\begin{equation*}
   y^2\left(y^2+\eta^2(a^2+b^2)\right)=0.
\end{equation*}
which has no real roots. Thus, there are no roots of the form $iy$.
\end{proof}

The distribution of roots in (\ref{chactEq}) is summarized in Figure \ref{Fig2}.

 \begin{figure}[hbt!]
		\centering
			\includegraphics[width=1\textwidth]{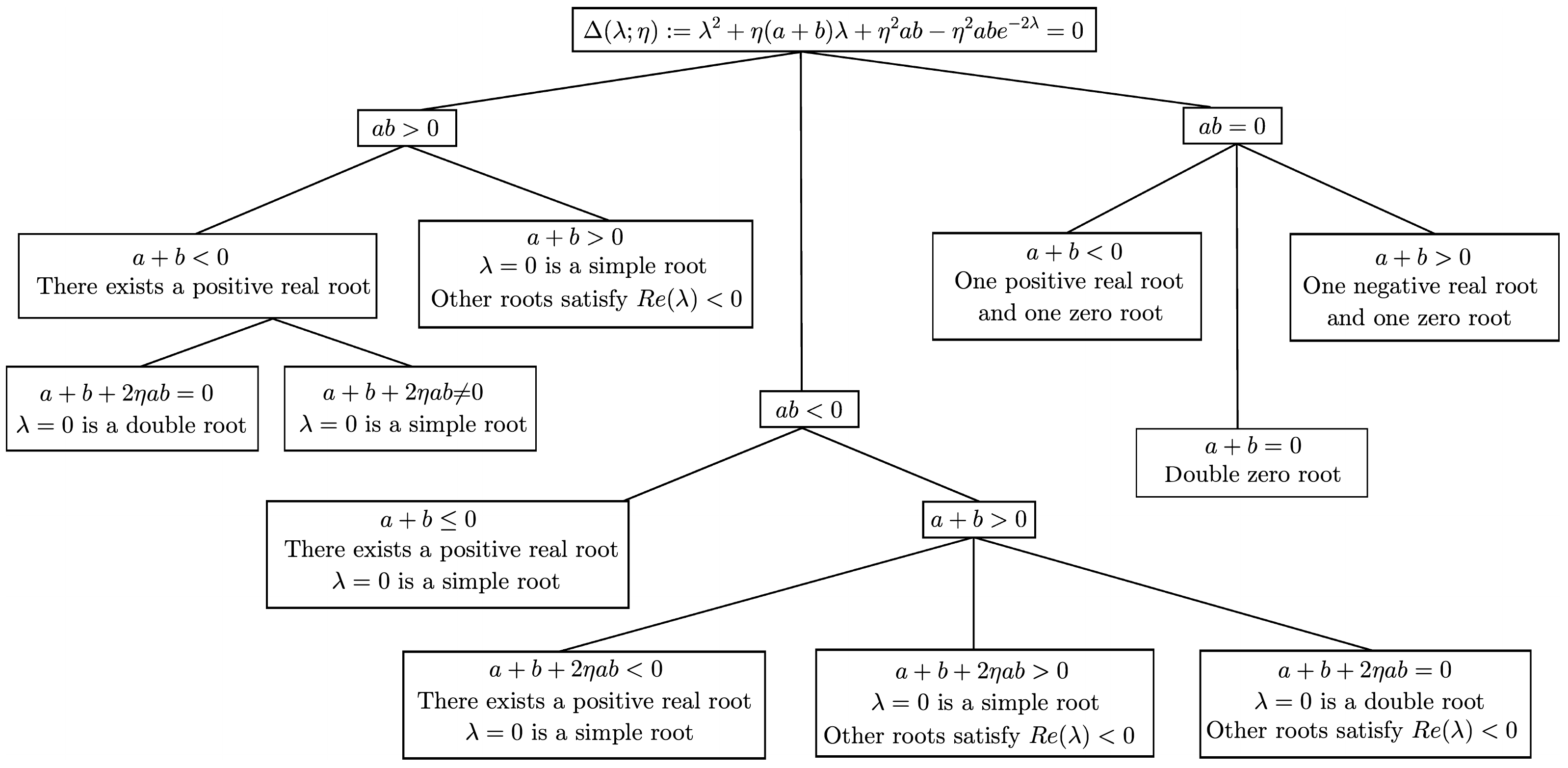}
	\caption{The distribution of roots in (\ref{chactEq}) \review{as discussed in Propositions \ref{prop000A}$-$\ref{prop_imag_roots}}.}
	 \label{Fig2}
	 \end{figure}

Recall the structure of the phase-locked solutions (\ref{phases}) of the phase model 
(\ref{phase_Mod}). From this we see that a phase-locked periodic solution of the 
original model  (\ref{Full_Mod}) corresponds to a line in the phase model 
(\ref{phase_Mod}), that is, when $\psi^*$ and $\omega^*$ are solutions 
of (\ref{sol_sys}), it follows that
\[\left\{ {\begin{array}{*{20}{c}}
{{\varphi _1} = {\omega ^*}t\qquad}&{\left( {\bmod 2\pi } \right)}\\
{{\varphi _2} = {\omega ^*}t + {\psi ^*}}&{\left( {\bmod 2\pi } \right)}
\end{array}} \right. \Rightarrow {\varphi _2} = {\varphi _1} + {\psi ^*}\left( {\bmod 2\pi } \right).\]
From Proposition \ref{prop_zero_root}, 
we know that for any $\tau>0$, $\Delta(\lambda;\eta(\tau))=0$ has a zero root. 
The simple zero root corresponds to the motion along these lines. It corresponds to 
the Floquet multiplier $1$ which is associated with the periodic solution
of the original model \eqref{Full_Mod}. Thus phase-locked solutions will be 
asymptotically stable if $\lambda=0$ is a simple root of the characteristic equation
\eqref{chactEq} and all other roots have negative real part.

\begin{remark}
\label{remark2}
The solution $\psi^*\ne 0,\pi$ is asymptotically stable for values of $a,b$ such
that $a>0$ and $b>0$ or $ab<0,\ a+b>0$ and $a+b+2\eta ab>0$.
 Since $H'$ is a $2\pi-$periodic function, the solutions 
$\psi^*$ and $2\pi-\psi^*$ have the same stability. 
\end{remark}

\begin{remark}
	\label{remark1}
Since $H$ is a $2\pi-$periodic function, $a=b=\frac{1}{\Omega}H'(\psi^*-\omega^*\eta-\Omega\tau)$ in (\ref{ab}) when $\psi^*=0,\pi$. Hence,  the stability of solutions when $\psi^*=0,\pi$ is determined by the sign of $H'(\psi^*-\omega^*\eta-\Omega\tau)$, that is, the solution is asymptotically stable when $H'(\psi^*-\omega^*\eta-\Omega\tau)>0$ and unstable when $H'(\psi^*-\omega^*\eta-\Omega\tau)<0$.
\end{remark}

\subsection{Bifurcation}\label{bif:sec}
Suppose that $\Omega$ and $\epsilon$ are fixed, but $\tau$ may be varied. From the 
discussion above, potential bifurcation points of the model \eqref{phase_Mod} 
are values $\tau=\tau^*$ where the characteristic equation for a particular phase-locked 
solution, $\psi^*,\omega^*$ has a double zero root. Let $\eta^*=\epsilon\Omega\tau^*$. 
When $\psi^*=0$ or $\pi$ there are two types of potential bifurcation points: 
\begin{itemize}
\item[(1)] $\tau^*$ where $H'(\psi^*-\omega^*\eta^*-\Omega\tau^*)=0$ (see Remark \ref{remark1});
\item[(2)] $\tau^*$ where $1+\eta^* \frac{1}{\Omega}H'(\psi^*-\omega^*\eta^*-\Omega\tau^*)=0$ (see Proposition~\ref{prop_zero_root}). 
\end{itemize}
For other values of $\psi^*$, Proposition~\ref{prop_zero_root} indicates there is a potential bifurcation point at 
\begin{itemize}
\item[(3)] $\tau^*$ where $\eta^*=-\frac{a+b}{2ab}$.  
\end{itemize}
Note that it is impossible to find an 
explicit \review{expression} for the bifurcation values because each of these conditions are 
implicit equations for $\tau^*$. 

Now we consider what type of bifurcations may occur at these points. We do not make 
a rigorous proof, which would require centre manifold and normal form theory. However,
we can make some plausible arguments based on the equations for the equilibrium
solutions.  Recall that $(\psi^*,\omega^*)$ with $\psi^*=0$ or $\pi$ defines
a phase-locked solution at $\tau$ if $F(\omega^*,\psi^*;\tau)=0$ where
\[ F(\omega,\psi^*;\tau)=\omega-\frac{1}{\Omega} H(\psi^*-\omega\eta-\Omega\tau). \]
Differentiating $F$ with respect to $\omega$ shows that the condition (2) 
corresponds to $F_\omega(\omega^*,\psi^*;\tau^*)=0$, that is, $\omega^*$ is 
a double root of $F$ when $\tau=\tau^*$. Thus as $\tau$ varies near $\tau^*$
we may expect that there should be two roots of $F$ near $\omega^*$ or none 
\footnote{More precisely, we expect this will occur if $F$ satisfies the further conditions
$F_{\tau}(\omega^*,\psi^*;\tau^*)=-\Omega(1+\epsilon\omega^*)/\eta^*\ne 0$ and 
$F_{\omega \omega}(\omega^*,\psi^*;\tau^*)=-(\eta^*)^2H''(\psi^*-\omega^*\eta^*-\Omega\tau^*)\ne 0$ 
\cite{Kuznetsov}.}.
Thus the bifurcation associated with condition (2) should be a saddle-node bifurcation 
involving two different phase-locked solutions with the same $\psi^*$. Note that this 
bifurcation is only physically relevant if $\eta^*>0$, i.e., 
$H'(\psi^*-\omega^*\eta^*-\Omega\tau^*)<0$. Thus, from Remark~\ref{remark1}, the
associated solutions will be unstable. In a similar manner one can show that
condition (3) corresponds to $(\psi^*,\omega^*)$ at $\tau=\tau^*$ being a point
 of tangency of the curves defined by equations \eqref{sol_sys}. Thus we expect 
it to correspond to a saddle-node bifurcation involving two out-of-phase solutions 
with different $\psi^*$. The stability of these
solutions will depend on which case of Proposition~\ref{prop_zero_root} applies.
Finally, we consider phase-locked solutions near $\psi=0$. Expanding 
equations \eqref{eq_H} and the first of \eqref{sol_sys} in $\psi$ and keeping the
two lowest order terms we have
\begin{eqnarray}
0&=&2H'(-\omega\eta-\Omega\tau)\psi+\frac{2}{3}H'''(-\omega\eta-\Omega\tau)\psi^3\\
\omega&=&\frac{1}{\Omega}\left(H(-\omega\eta-\Omega\tau)+H'(-\omega\eta-\Omega\tau)\psi\right).
\end{eqnarray}
Thus we see that $\psi^*=0$, \review{$\omega^*=H(-\omega^*\eta-\Omega\tau)/\Omega$,} is always a solution 
of this system and if there is $\tau^*$ such that condition (1) is satisfied
and $H'''(-\omega^*\eta^*-\Omega\tau^*)\ne 0$
then this will be a triple root of the system.
Thus we expect that condition 
(1) with $\psi^*=0$ corresponds to a pitchfork bifurcation where two out-of-phase solutions
are created near $0$. Similarly condition (1) with $\psi^*=\pi$ should correspond
to a pitchfork bifurcation where two out-of-phase solutions are created near $\pi$.

\review{Note that the phase interaction function $H$ can be represented by Fourier series expansion
\[H(\phi) = {a_0} + \sum\limits_{k = 1}^\infty  {\left[ {{a_k}\cos (k\phi) + {b_k}\sin (k\phi)} \right]}. \]
When the interaction function $H$ is represented by the first set of Fourier   modes 
\begin{equation}\label{Eq:Fourier_0}
    H(\phi)=a_0+a_1\cos(\phi)+b_1\sin(\phi),
\end{equation}
the authors in \cite{campbell2012phase} show that the out-of-phase solutions and pitchfork bifurcation  cannot occur in the phase model \eqref{phase_Mod} with small time delay.
However, it may occur when the time delay is large.  
Indeed, when $H$ has the form in \eqref{Eq:Fourier_0}, then it follows from \eqref{sol_sys} and \eqref{eq_H} that
\begin{align}
\Omega {\omega ^*} &= {a_0} + A({\omega ^*})\sin ({\psi ^*}) + B({\omega ^*})\cos ({\psi ^*}),\label{PB_FFM_1}\\
0 &= 2A({\omega ^*})\sin ({\psi ^*})\label{PB_FFM_2}
\end{align}
respectively, where 
\begin{align*}
  A({\omega ^*}) &= {b_1}\cos ({\omega ^*}\eta  + \Omega \tau ) + {a_1}\sin ({\omega ^*}\eta  + \Omega \tau ),\\
B({\omega ^*}) &= {a_1}\cos ({\omega ^*}\eta  + \Omega \tau ) - {b_1}\sin ({\omega ^*}\eta  + \Omega \tau ).
\end{align*}
Thus,  from $\sin ({\psi ^*})=0$, we have that $\psi ^*=0,\pi$ with the corresponding $\omega^*$  determined by 
    \begin{equation}\label{PB_FFM_3}
        \Omega {\omega ^*} - {a_0} =  \pm B({\omega ^*}).
    \end{equation}
 where the $+$ corresponds to $\psi^*=0$
and the $-$ to $\psi^*=\pi$. Also,  from $A({\omega ^*})=0$ we  determine $\omega ^*$ and the corresponding $\psi^*$ is obtained from
       \begin{equation}\label{PB_FFM_4}
       \cos ({\psi ^*}) = \frac{{\Omega {\omega ^*} - {a_0}}}{{B({\omega ^*})}}.
    \end{equation}
Consequently, we have the following cases
\begin{itemize}
    \item   if $\left| {\Omega {\omega ^*} - {a_0}} \right| < \left| {B({\omega ^*})} \right|$, then two out-of-phase solutions $\psi^*$ and $2\pi-\psi^*$ exist,
     \item if $\left| {\Omega {\omega ^*} - {a_0}} \right| = \left| {B({\omega ^*})} \right|$, then one  solution exists ($\psi^*=0$ or $\psi^*=\pi$),
     \item if $\left| {\Omega {\omega ^*} - {a_0}} \right| > \left| {B({\omega ^*})} \right|$, then no solution satisfying \eqref{PB_FFM_4} exists.
\end{itemize}
Note that $H'({-\omega ^*}\eta  - \Omega \tau )=A(\omega ^*)$ and $H'({\pi-\omega ^*}\eta  - \Omega \tau )=-A(\omega ^*)$.
Thus, the solutions $0$ and $\pi$ change stability when $A(\omega ^*)=0$ where $\omega^*$ satisfies \eqref{PB_FFM_3}. 
As $\tau$ varies, out-of-phase solutions will disappear if $ \frac{{\Omega {\omega ^*} - {a_0}}}{{B({\omega ^*})}}-1$ changes its sign from negative to positive. 
When  $ \frac{{\Omega {\omega ^*} - {a_0}}}{{B({\omega ^*})}}=1$, then $\psi^*=0$. Hence, a pitchfork bifurcation   occurs at $\psi^*=0$. Similarly when $ \frac{{\Omega {\omega ^*} - {a_0}}}{{B({\omega ^*})}}=-1$ a pitchfork bifurcation occurs at
$\psi^*=\pi$.}

\subsection{The full model with small delay}
\label{Sec2_small_Delay}

  When the time delay, $\tau$, in (\ref{Full_Mod}) is relatively small, in the sense that $\Omega \tau=\mathcal{O}(1)$, it follows from the theory of averaging that the time delay $\tau$
enters the interaction function $H$ in  (\ref{phase_Mod}) as a
phase shift \cite{ermentrout2009delays,hoppensteadt2012weakly,izhikevich1998phase,campbell2012phase}.  
 In \cite{campbell2012phase}, the authors considered  this case and consequently the time delay $\eta$ in the phase model (\ref{phase_Mod}) was neglected, and hence, it becomes 
\beqn
\label{phase_Mod22}
\frac{d\varphi_1}{d{t}}&=\frac{1}{\Omega} H(\varphi_2(t)-\varphi_1(t)-\Omega\tau),\\
\frac{d\varphi_2}{d{t}}&=\frac{1}{\Omega} H(\varphi_1(t)-\varphi_2(t)-\Omega\tau),
\eeqn
Therefore,  
 they were able to reduce (\ref{phase_Mod}) into a one dimensional ordinary differential equation
\begin{equation}
\label{eq_small_delay_1}
	\frac{d \phi}{d t}=-2 \epsilon[H(\phi-\Omega \tau)-H(-\phi-\Omega \tau)].
\end{equation} 
where $\phi=\varphi_2-\varphi_1$.
The existence of phase-locked solutions of (\ref{eq_small_delay_1}) was discussed in \cite{campbell2012phase} without introducing the frequency deviation $\omega$. Hence, the in-phase and anti-phase solutions were unique. Moreover, the stability of the phase-locked solution $\phi^*$ in (\ref{eq_small_delay_1}) was determined by the sign of 
\begin{equation}
\label{eq_small_delay_2}
{\widehat H}'(\phi^*):=\overline{a}+\overline{b} 
\end{equation}
where $\overline{a}=H^{\prime}(\phi^*-\Omega \tau)$ and $\overline{b}=H^{\prime}(-\phi^*-\Omega \tau)$. If ${\widehat H}'(\phi^*)>0$ then $\phi^*$  is asymptotically stable and if ${\widehat H}'(\phi^*)<0$  it is unstable. When ${\widehat H}'(\phi^*)=0$ the stability is not determined by the linearization.  

\begin{remark}
\review{In \cite{campbell2012phase}, 
due to the reduction of the two dimensional  system  
(\ref{phase_Mod22}) into a single equation (\ref{eq_small_delay_1}), the  zero root was omitted  in characteristic equation.}  Indeed, 
the characteristic equation of (\ref{eq_small_delay_1}) is $\lambda+{\widehat H}'(\phi^*)=0$  
while the characteristic equation of (\ref{phase_Mod22}) is
\begin{equation} 
\lambda({\lambda} + {\widehat H}'(\phi^*))  =0.
\end{equation}
It is clear that the latter characteristic equation always has a zero root. 
\end{remark}

\review{Now we compare these results with what happens when $\tau$ is small, i.e.,
$\Omega \tau=\mathcal{O}(1)$, in our model \eqref{phase_Mod}}.  Recall that 
$\eta=\epsilon\Omega\tau$ thus the assumption on $\tau$ 
implies that $\eta=\mathcal{O}(\epsilon)$. Also, note that the phase difference 
$\phi^*$ of the phase locked solutions for the model \eqref{eq_small_delay_2} is 
the same as the phase deviation difference $\psi^*$ for our model.  

First consider the existence of phase-locked solutions. For our model we must
solve the equations \eqref{eq_H} and one of \eqref{sol_sys} simultaneously for $\psi$ and
$\omega$. When $\eta=\mathcal{O}(\epsilon)$, however, to first order in $\epsilon$
the $H$ function no longer depends on $\omega$. Thus phase-locked solutions
are determined by $\psi^*$ satisfying $H_{\tau}(\psi^*)=0$, with
$\omega^*=\frac{1}{\Omega}H(\psi^*-\Omega\tau)$. \review{This equation for $\psi^*$ is the
same as in \cite{campbell2012phase}}. In \cite{campbell2012phase} they did not solve for $\omega^*$ as it was not needed 
to determine the phase-locked solutions or their stability. It remains to consider the
uniqueness of the in-phase and anti-phase solutions. 
From equations (\ref{omega0}) and (\ref{omegapi}), these solutions correspond to frequency 
deviations $\omega^*$ satisfying $F(\omega^*,\psi^*)=0$
with $\psi^*=0,\pi$, respectively. 
Since $H$ and $H'$ are continuous and $2\pi$ periodic they are bounded. Thus we see that
$\lim_{\omega \rightarrow \pm\infty}F(\omega)=\pm\infty$. Further, recalling
\eqref{Fp0def}, since $\eta=\mathcal{O}(\epsilon)$, $F_\omega(\omega^*,\psi^*)>0$.  
Thus for any $\tau$ sufficiently small, there 
will be a unique frequency deviation $\omega^*$ for $\psi^*=0$ and for $\psi^*=\pi$. 
This is consistent with the results in \cite{campbell2012phase} which have only
one in-phase and anti-phase solution for each value of $\tau$. 

Now consider the stability of the phase-locked solutions. Recall that the stability 
for our model is summarized in Figure~\ref{Fig2}. When $\eta=\mathcal{O}(\epsilon)$, 
$\sgn(a+b+ab\eta)\approx \sgn(a+b)$, thus the conditions for stability/instability of 
phase-locked solutions of our model reduce to the stability if $a+b>0$ and
\review{instability} if $a+b<0$. Further $a\approx \overline{a}$ and $b\approx\overline{b}$, thus
the stability results of our model reduce to those of  \cite{campbell2012phase}
when $\Omega\tau=\mathcal{O}(1)$. The key point is that, regardless of the size
of $\tau$, the countable infinity of complex roots of the characteristic equation 
\eqref{chactEq} all have negative real part. Thus the stability 
of the phase-locked solutions is determined by finitely many real roots, 
and it is possible for an ordinary differential equation to accurately reflect
this stability.

In Section \ref{Sec3_3}, we will show numerically that our model with 
$\Omega\tau=\mathcal{O}(1)$ fully recovers \cite[Figure 4b]{campbell2012phase} and 
\cite[Figure 5b]{campbell2012phase}. 

\section[Application to Morris-Lecar model]{Application to Morris-Lecar oscillators with diffusive\\ coupling} 
\label{Sec3}

In this section we  apply the results from the previous sections to a network of dimensionless Morris-Lecar oscillators with time delayed diffusive coupling, see e.g.,  \cite{prasad2008universal,buric2003dynamics}. This model is given by
\beqn
\label{MLmodel}
{v'_i} &= {I_{app}} - {g_{Ca}}{m_\infty }({v_i})({v_i} - {v_{Ca}}) - {g_K}{w_i}({v_i} - {v_K}) - {g_L}({v_i} - {v_L}) - \epsilon ({v_j}(t - \tau ) - {v_i}(t)),\\
{w'_i} &= \varphi \lambda ({v_i})({w_\infty }({v_i}) - {w_i}),
\eeqn
for $i,j=1,2$ such that $i\neq j$, where
\beqnn
 m_{\infty}(v) &=\frac{1}{2}\left(1+\tanh \left(\left(v-\nu_{1}\right) / \nu_{2}\right)\right), \\
 w_{\infty}(v) &=\frac{1}{2}\left(1+\tanh \left(\left(v-\nu_{3}\right) / \nu_{4}\right)\right), \\
  \lambda(v) &=\cosh \left(\left(v-\nu_{3}\right) /\left(2 \nu_{4}\right)\right).
\eeqnn

Using the parameter set I$\backslash$II from \cite[Table 1]{campbell2012phase}, when there is no
coupling in the network each oscillator has a unique exponentially
asymptotically stable limit cycle with period $T=23.87\backslash13.81$ corresponding to frequency $\Omega=0.2632\backslash 0.455$.
The normalized  system, such that the frequency is $1$, corresponding to (\ref{MLmodel}) is 
\beqn
\label{MLmodelNor}
{v'_i} &= \frac{1}{\Omega}({I_{app}} - {g_{Ca}}{m_\infty }({v_i})({v_i} - {v_{Ca}}) - {g_K}{w_i}({v_i} - {v_K}) - {g_L}({v_i} - {v_L})) - \frac{\epsilon}{\Omega} ({v_j}(t - \Omega \tau ) - {v_i}(t)),\\
{w'_i} &= \frac{1}{\Omega}(\varphi \lambda ({v_i})({w_\infty }({v_i}) - {w_i})),
\eeqn
$i=1,2$. 
\review{Note that this is in the form (\ref{Full_Mod}) with ${\bf{X}}_i(t)=(v_{i}(t), w_{i}(t))^T$ and the function $\mathbf{G}:\mathbb{R}^2\times\mathbb{R}^2\to \mathbb{R}^2$ is  given by ${\bf{G}}=(G_1,G_2)$ where ${{G}}_1({\bf{X}}_1(t), {\bf{X}}_2(t)) = \frac{1}{\Omega}(v_2(t−\Omega \tau)−v_1(t))$ and ${{G}}_2({\bf{X}}_1(t), {\bf{X}}_2(t)) =0$.
Then,  the phase model interaction function $H$ is given by \eqref{New:funH}.}

For each parameter set, the authors in  
\cite{campbell2012phase} solved \eqref{New:funH} numerically and
calculated the approximation of the phase model interaction function $H$ by the first  five terms of its Fourier series. These are given by
	\beqn
	\label{H_I_Num}
	H_I(\phi)&=2.915252-2.684797 \cos (\phi)-0.3278022 \cos (2 \phi)\\
	&~ ~~+0.05596774 \cos (3 \phi)+0.0351635 \cos (4 \phi)+	4.908449 \sin (\phi) \\
&~ ~~-0.7020183 \sin (2 \phi)-0.09934668 \sin (3 \phi)-0.01104474  \sin (4 \phi),\\	H_{II}(\phi)&=0.6271561-0.5209326 \cos (\phi)-0.08538575 \cos (2 \phi)\\
&~ ~~	-0.005648281\cos (3 \phi) -0.0002642404 \cos (4 \phi)+	1.595618\sin (\phi)\\
&~ ~~-0.04727176\sin (2 \phi)-0.00301241 \sin (3 \phi)-0.002760313 \sin (4 \phi)	\eeqn
corresponding to the  parameter sets I and II, respectively,  see \cite[Table 2]{campbell2012phase}. 
 \review{Note that the two parameter sets represent limit cycles which are created
by different bifurcations as the input current $I_{app}$ is
varied. For parameter set I the limit cycle is created in a
saddle-node on an invariant circle bifurcation, while
for parameter set II the limit cycle is created in a supercitical
Hopf bifurcation. The chosen parameter values have $I_{app}$
slightly larger than the bifurcation values.}

In \cite {campbell2012phase} the authors studied how
small epsilon needed to be for the phase model to
faithfully represented the behaviour of the full system
\eqref{MLmodelNor}, in the case of small delay. They found that for
parameter set I $\epsilon$ could be as large as $0.05$ while
for parameter set II epsilon should not exceed $0.001$.
Therefore, in the rest of this section, we take $\epsilon=0.05$ with parameter set I and $\epsilon=0.001$ when we use parameter set II.
Consequently, we choose $\tau\ge 75.988$ for  parameter set I and $\tau\ge 2197.8$ for parameter set II so that $\epsilon \Omega \tau=\mathcal{O}(1)$. Moreover, we 
compare our results with the results in \cite {campbell2012phase} when the time delay, $\tau$, in (\ref{Full_Mod}) is relatively small.

\subsection{In-phase and anti-phase solutions}
\label{Sec3_1}
To find $\omega^*$ corresponding  to the in-phase and anti-phase solutions, $\psi^*=0,\pi$, we solve (\ref{omega0}) and (\ref{omegapi}) \review{with $H$ given by either $H_{I}$ or $H_{II}$
from \eqref{H_I_Num}. Note that these equations
can only be solved numerically due to the complicated
form of $H_I$ and $H_{II}$. For particular values of $\tau$, we
represent these solutions graphically in Figure \ref{fig0} as the
intersection points of the line $y=\omega$ and the curve
$y=H(-\omega \eta -\Omega \tau)/\Omega$.}
In (\ref{omega0}), the slope of the right hand side  at any $\omega$ is $\ell_{\tau}=-\epsilon\tau H'(-\omega\eta-\Omega\tau)$.  
Then, by applying the stability condition in Remark \ref{remark1}, we see that the 
in-phase solution is stable when 
the line $y=\omega$ intersects the curve of the function $H(-\omega\eta-\Omega\tau)/\Omega$ at a point where it has negative slope, while it is unstable when the intersection is at a point with positive slope, see Figure \ref{fig0}. 
When the line $y=\omega$ alternates from intersecting the curve of $H(-\omega\eta-\Omega\tau)/\Omega$ at a point with positive slope to intersecting it at a point with negative slope, the solutions $\omega^*$ alternate between stable and unstable, see Figure \ref{fig0}.
\review{For fixed $\Omega$ and $\epsilon$, as $\tau$ increases the curve 
$H(-\eta \omega-\Omega\tau)=H(-\Omega\tau(1+\epsilon\omega))$ compresses horizontally
causing the creation and destruction of intersection points.
For specific values 
$\tau=\tau^*_1>0$, an intersection point will occur at the point where the function $H$ has 
slope one, i.e., the curve $y=H$ will be tangent to the line $y=\omega$ at these
values of $\tau$, see Figure \ref{fig000_B}.
Near such points, i.e., for $\tau$ slightly bigger or smaller,
there exist two consecutive intersection points both of which are unstable, see 
Figure \ref{fig000_C}.
Then, as $\tau$ changes further to $\tau_2^*$, one unstable point 
quickly passes through the point where $H$ has zero slope and becomes  stable, see 
Figure \ref{fig0_B}.
The values $\tau^*_1$ correspond to the saddle-node 
bifurcations of in-phase and anti-phase solutions discussed in Section \ref{bif:sec}.}
We will discuss the points $\tau_2^*$ later.  In Figure \ref{fig_SI_tau_omega_psi}, 
we plot $\omega^*$  corresponding to $\psi^*=0,\pi$ for various values of the time 
delay $\tau$, showing the many co-existing solutions which can occur and the
transitions of the solutions as $\tau$ varies.
These solutions were found by
implementing the algorithm from \cite{rahimian2011new}  in \textit{Wolfram Mathematica} to find all the solutions of \eqref{omega0} or \eqref{omegapi}.

\begin{figure}[hbt!]
	\centering
	\begin{subfigure}{0.49\textwidth}
		\centering
		\includegraphics[width=1\linewidth]{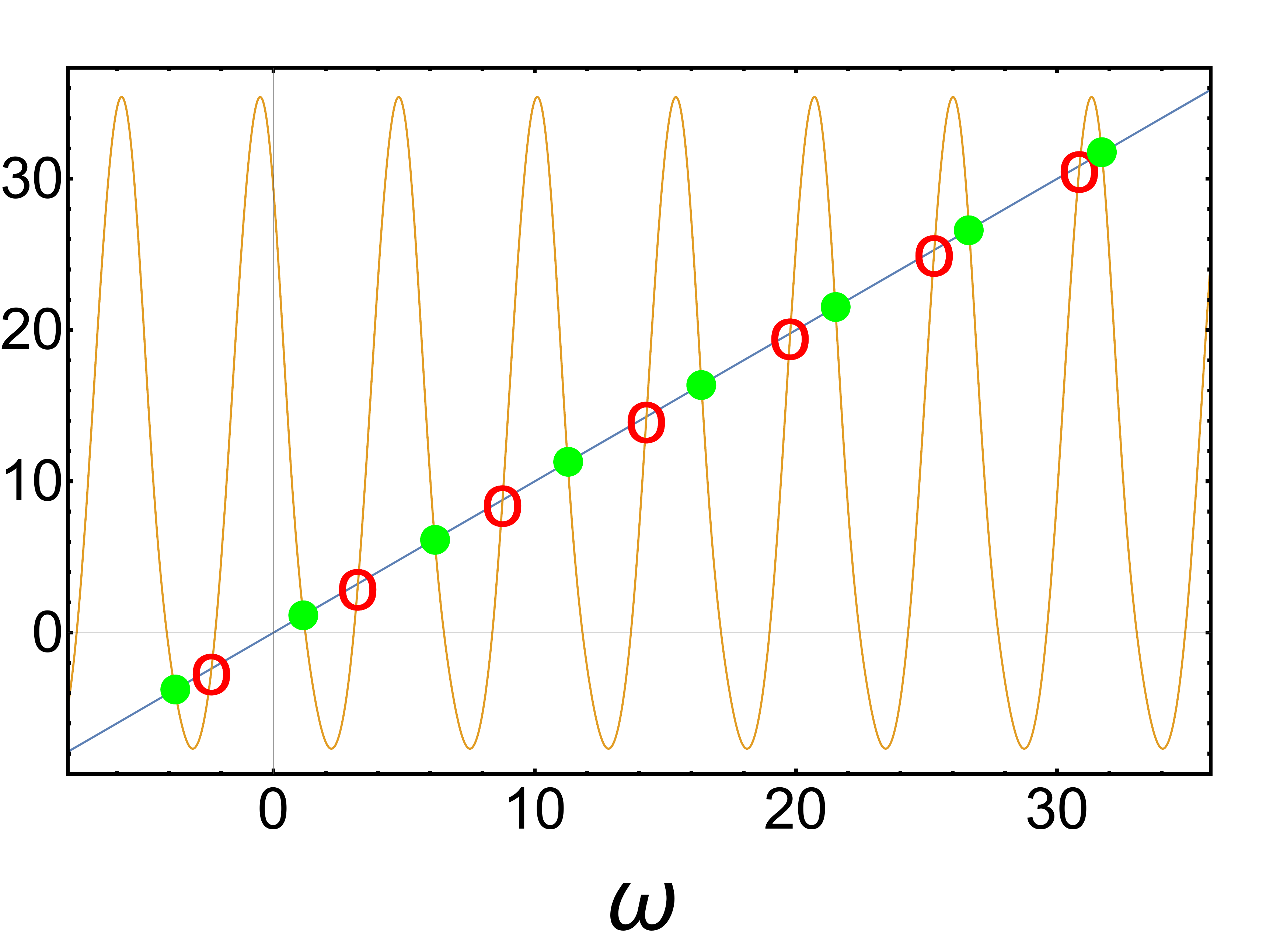}
		\caption{$ H_I(-\omega\eta-\Omega\tau)/\Omega$ and $y=\omega$. $\tau=90$.}
		\label{fig0_A}
	\end{subfigure}%
	\begin{subfigure}{.49\textwidth}
		\centering
		\includegraphics[width=1\linewidth]{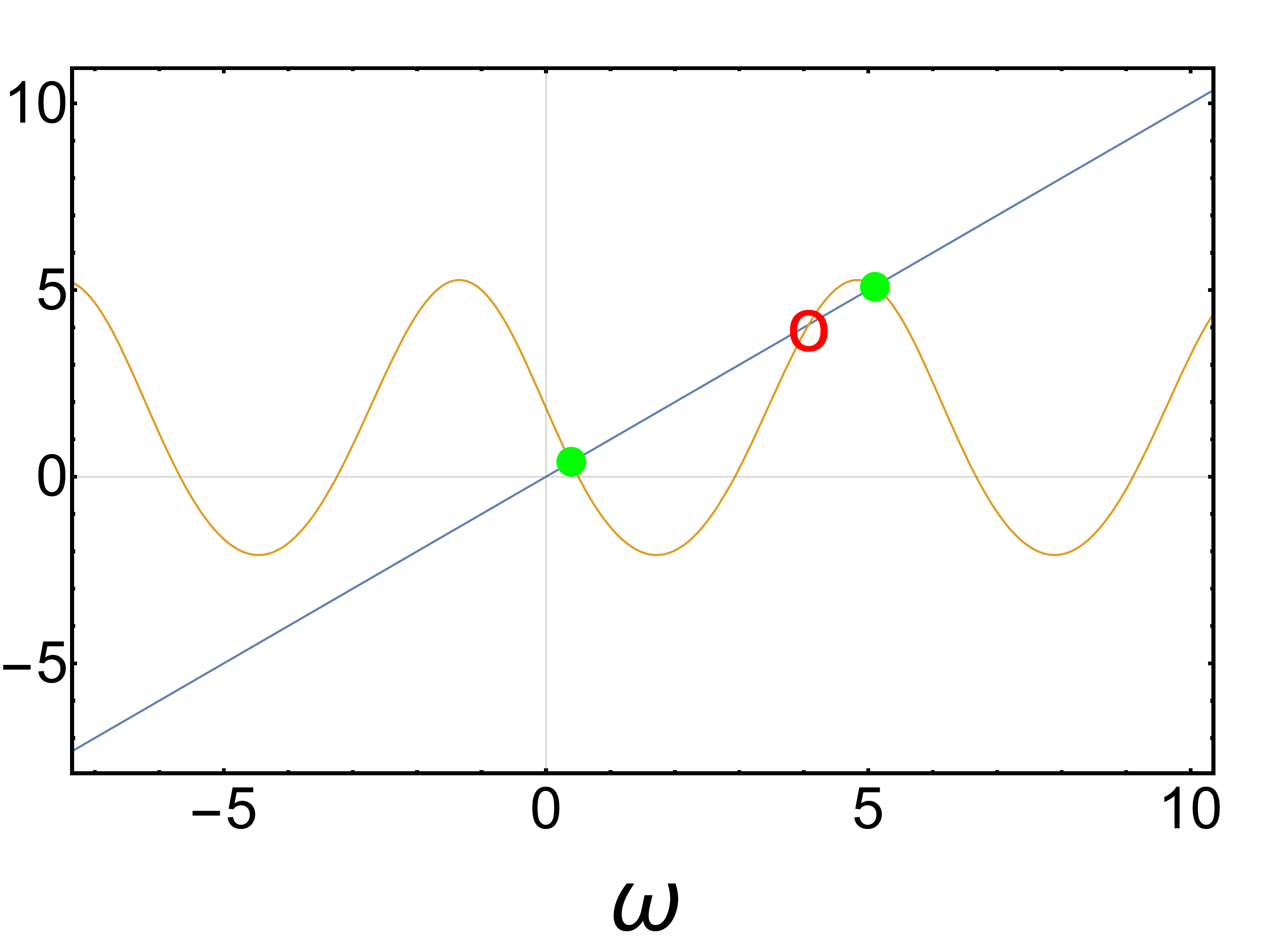}
		\caption{$ H_{II}(-\omega\eta-\Omega\tau)/\Omega$ and $y=\omega$. $\tau=2236$.}
		\label{fig0_B}
	\end{subfigure}
		\caption{Graphical representation of the solutions to (\ref{omega0}) with fixed $\tau$. The circles \textcolor{green}{
			$\CIRCLE$}/\textcolor{red}{   $\Circle$} represent stable/unstable solutions.} 
			\label{fig0}
\end{figure}

\begin{figure}[hbt!]
	\centering
	\begin{subfigure}{0.33\textwidth}
		\centering
		\includegraphics[height=3.8cm,width=5.5cm]{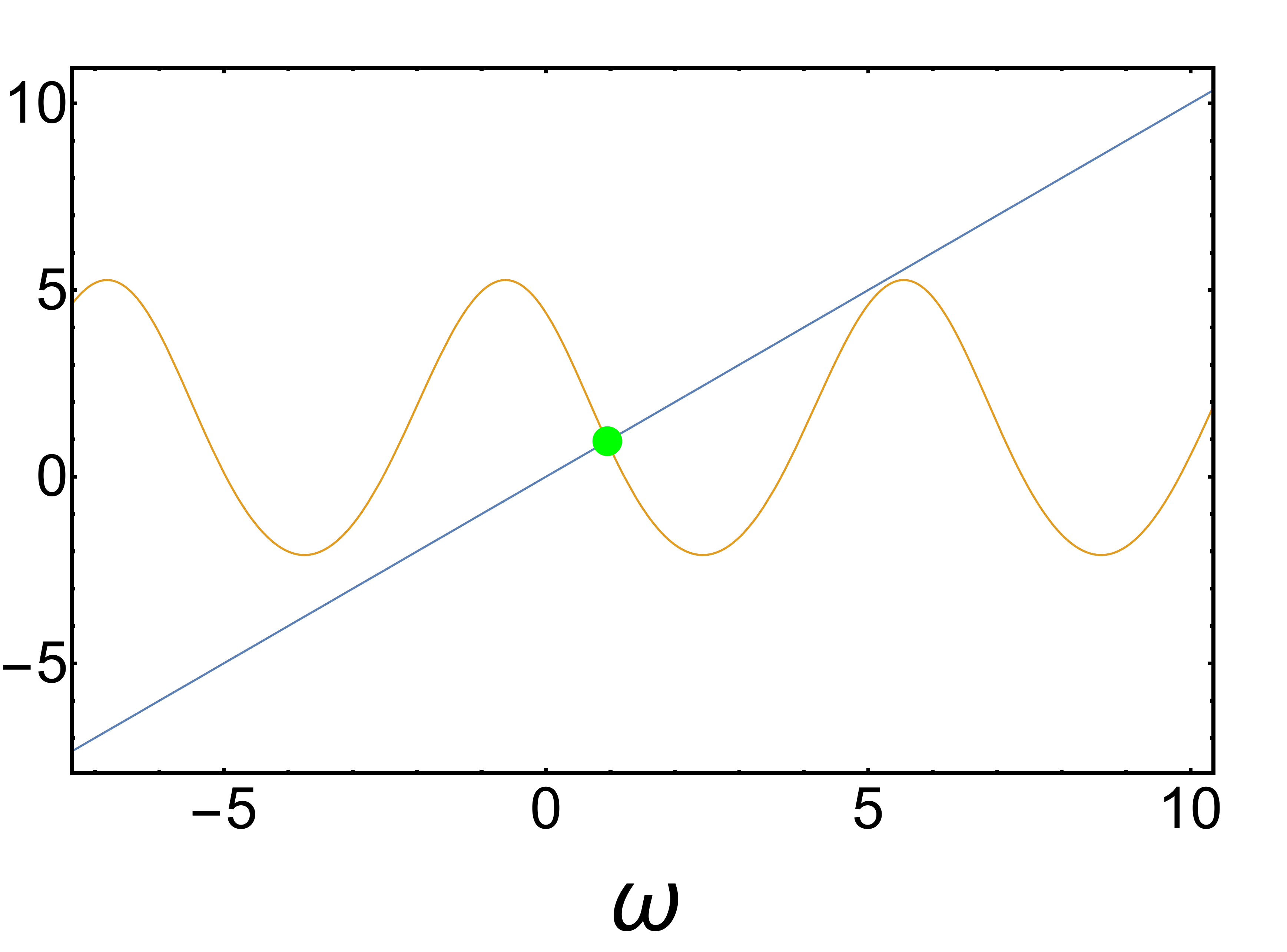}
		\caption{ $\tau=2234.4$.}
		\label{fig000_A}
	\end{subfigure}%
	\begin{subfigure}{.33\textwidth}
		\centering
		\includegraphics[height=3.8cm,width=5.5cm]{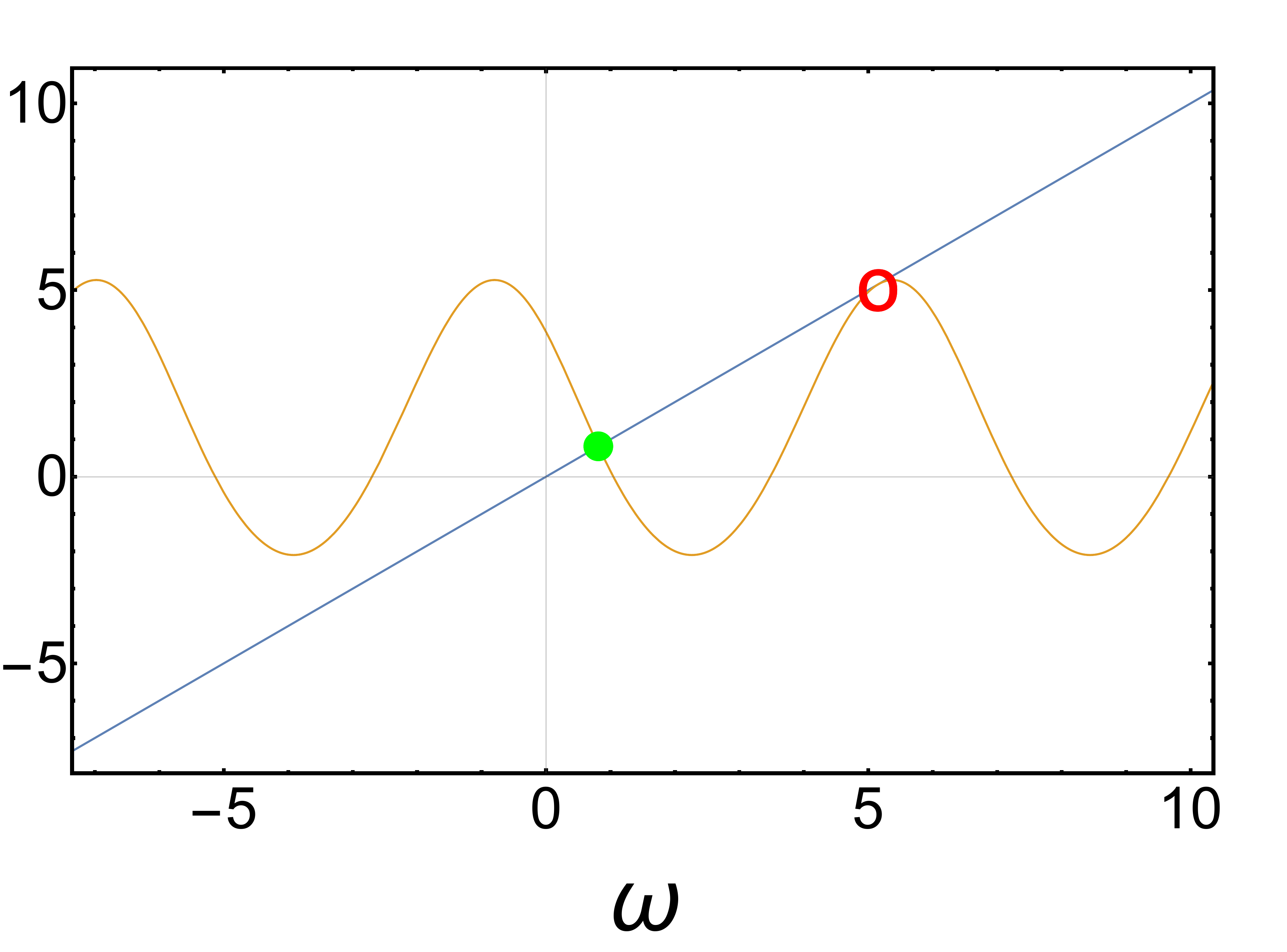}
		\caption{$\tau=2234.78$.}
		\label{fig000_B}
	\end{subfigure}
	\begin{subfigure}{.33\textwidth}
		\centering
		\includegraphics[height=3.8cm,width=5.5cm]{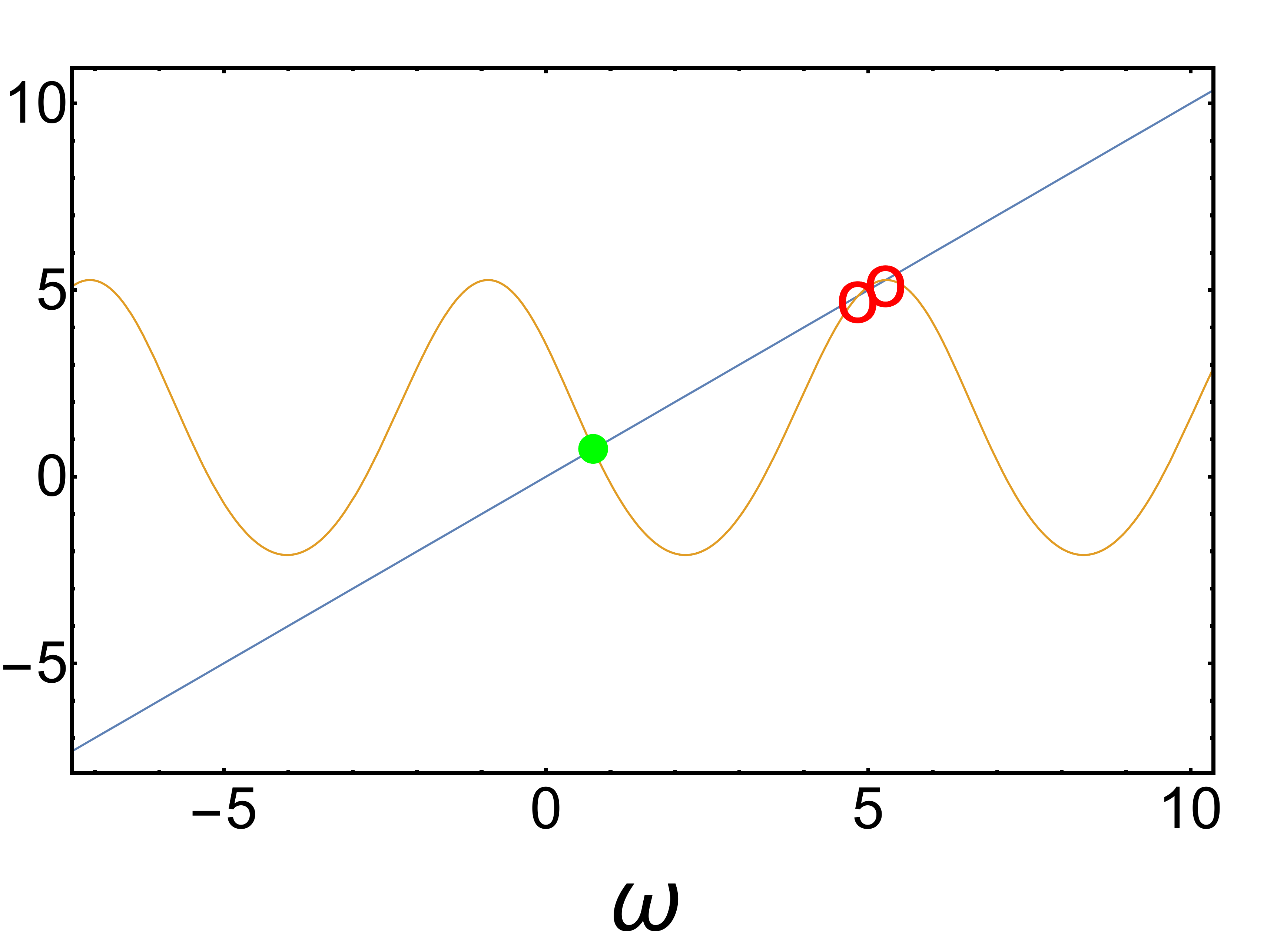}
		\caption{$\tau=2235$.}
		\label{fig000_C}
	\end{subfigure}
	\caption{Graphical 
solutions of $ H_{II}(-\omega\eta-\Omega\tau)/\Omega$ and $y=\omega$ with fixed $\tau$. The circles \textcolor{green}{
			$\CIRCLE$}/\textcolor{red}{$\Circle$} represent stable/unstable solutions.} 
			\label{fig000}
\end{figure}

To compare prediction of the phase model (\ref{phase_Mod}) and solutions of the full model (\ref{MLmodel}),  we 
solve (\ref{MLmodelNor}) 
 numerically with parameter sets I and II with various values of $\tau$ and different initial conditions. 
The
initial conditions are of the form
\beqn
\label{InCoform}
\left(v_{1}(t), w_{1}(t), v_{2}(t), w_{2}(t)\right)^T=\left(v_{10}, w_{10}, v_{20}, w_{20}\right)^T \quad t\in[-\tau\Omega,0].
\eeqn
Figure \ref{Fig_V_vs_t} shows time series of $v_i$ in (\ref{MLmodelNor}) with different initial conditions. We notice the coexistence of in-phase 
solutions with different
frequencies when $\tau=110$ with parameter set I.
The numerical solutions are obtained by using \textit{Wolfram Mathematica}. We use the command \textsf{NDSolve}  to solve the full model numerically.

 \begin{figure}[hbt!]
		\centering
			\includegraphics[width=0.7\textwidth]{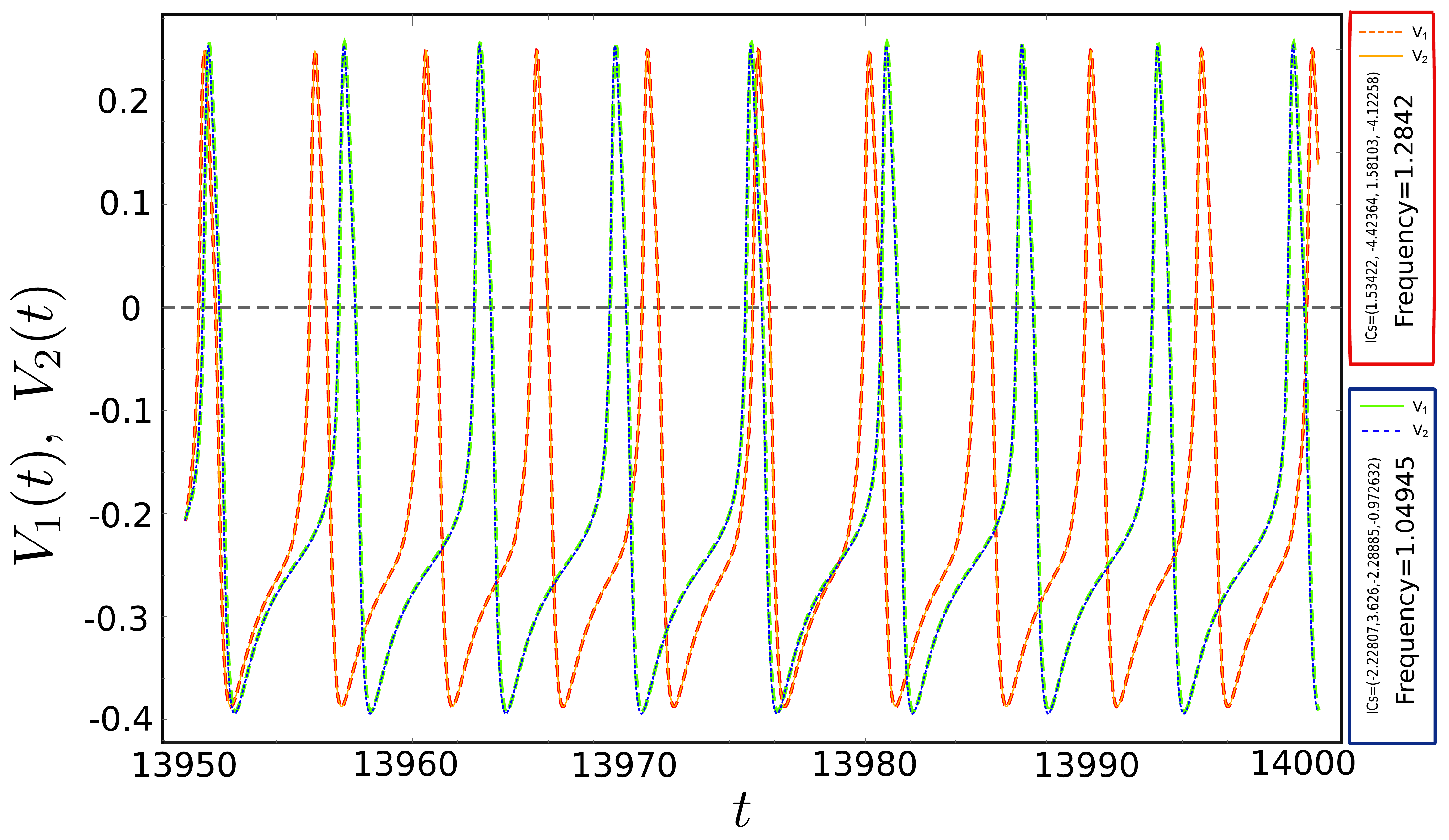}
	\caption{The coexistence of in-phase 
solutions of (\ref{MLmodelNor}) with different
frequencies when $\tau=110$ with parameter sets I. We take different initial conditions: $(1.53422,-4.42364,1.58103,-4.12258)^T$ for red/orange curves and $(-2.22807,3.626,-2.28885,-0.972632)^T$ for blue/green curves.}
	 \label{Fig_V_vs_t}
	 \end{figure}

When $\epsilon=0$,  each uncoupled equation in (\ref{MLmodelNor}) has $2\pi-$periodic solution, that is,  the frequency of each oscillator is unity. 
Consequently, when $\epsilon\ne 0$ and equation (\ref{MLmodelNor}) has a phase-locked solution,  the phase of the first oscillator  is $\theta_1(t)= t+\omega^*\epsilon t$ and that of the second oscillator is $\theta_2(t)= t+\omega^*\epsilon t+\psi^*$ where $\omega^*$ is the frequency deviation and $\psi^*$ is the phase shift. Thus, the frequency of each oscillator is $1+\omega\epsilon^*$, and the period $\mathcal{T}$ is approximately 
$$\mathcal{T}=\frac{2\pi}{1+\omega^*\epsilon}.$$
From the numerical solution of (\ref{MLmodelNor}) for a stable phase-locked solution,
we can calculate the period $\mathcal{T}$ of the oscillators and determine the 
approximate frequency deviation from 
\beqn
\omega^*\approx \frac{1}{\epsilon}\left( \frac{2\pi}{\mathcal{T}}-1 \right).
\label{appo}
\eeqn
Figure \ref{fig_SI_tau_omega_psi} shows the coexistence of stable in-phase and anti-phase periodic solutions and demonstrates that the approximation of $\omega^*$ from (\ref{appo}) is close to a stable solution of the phase model.
\review{The values of $\omega^*$ with 
the normalized error
\begin{equation}\label{Nerror}
{{\rm E_N}}=\frac{(\omega^* {\rm{~in~the~phase~model}})-(\omega^* {\rm{~in~the~full~model}})}{\omega^* {\rm{~in~the~full~model}}}
\end{equation}
are shown in Tables \ref{table_psi_0} and  \ref{table_psi_0_S2}. Note that the  quantity ${{\rm E_N}}$ is the normalized error with respect to the size of $\omega^*$ in the full model.}
Except for a few cases, the phase model gives a very accurate prediction of the values
of $\omega^*$. The phase model predicted stable phase-locked solutions that we did not find numerically, however, it is possible that further exploration with different initial conditions might find them.
 
\begin{figure}[hbt!]
	\centering
	\begin{subfigure}{.5\textwidth}
		\centering
		\includegraphics[width=1\linewidth]{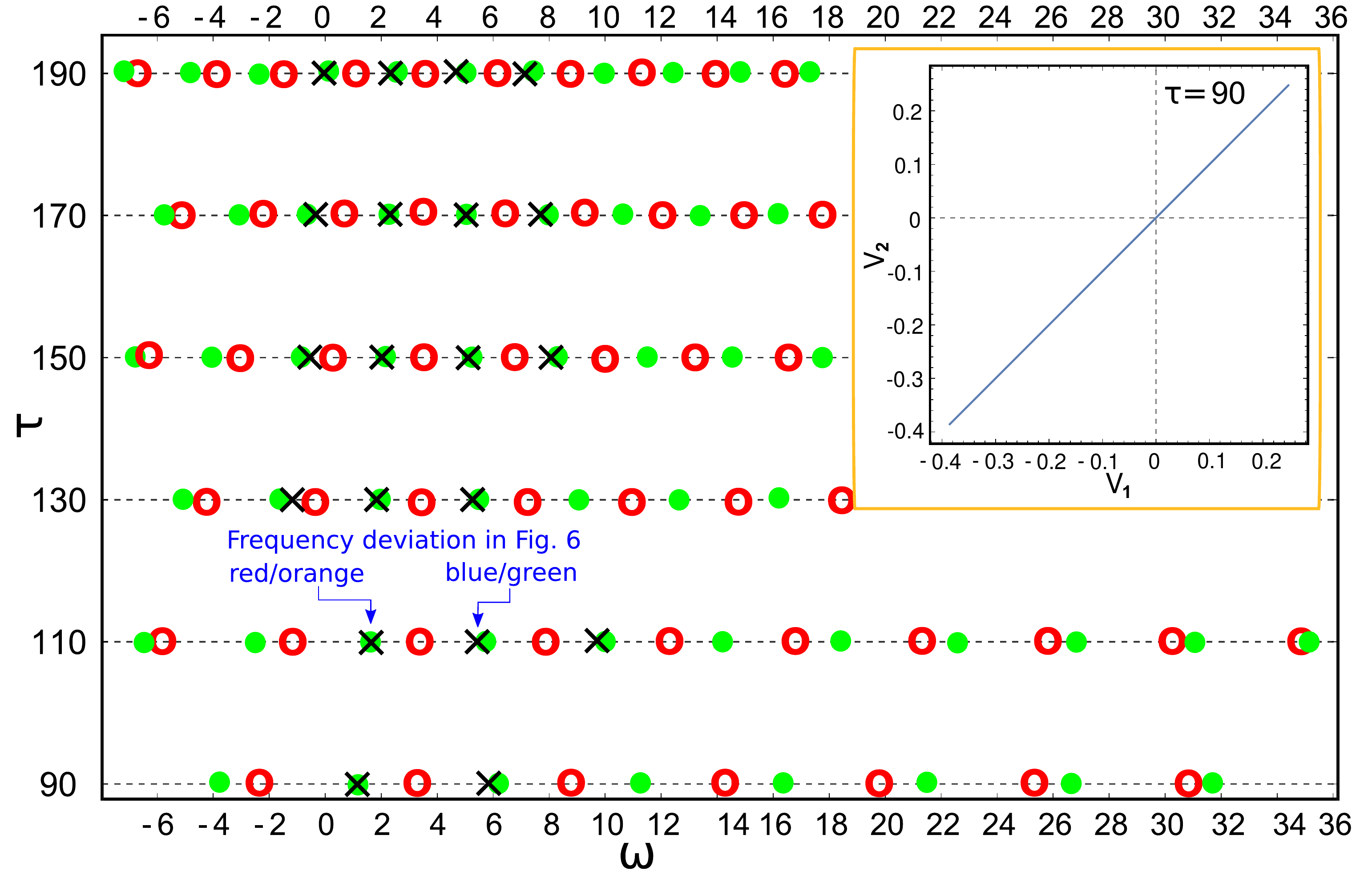}
		\caption{Parameter set I. $\psi^*=0$.}
		\label{fig_SI_tau_omega_psi_1}
	\end{subfigure}%
	\begin{subfigure}{.5\textwidth}
		\centering
		\includegraphics[width=1\linewidth]{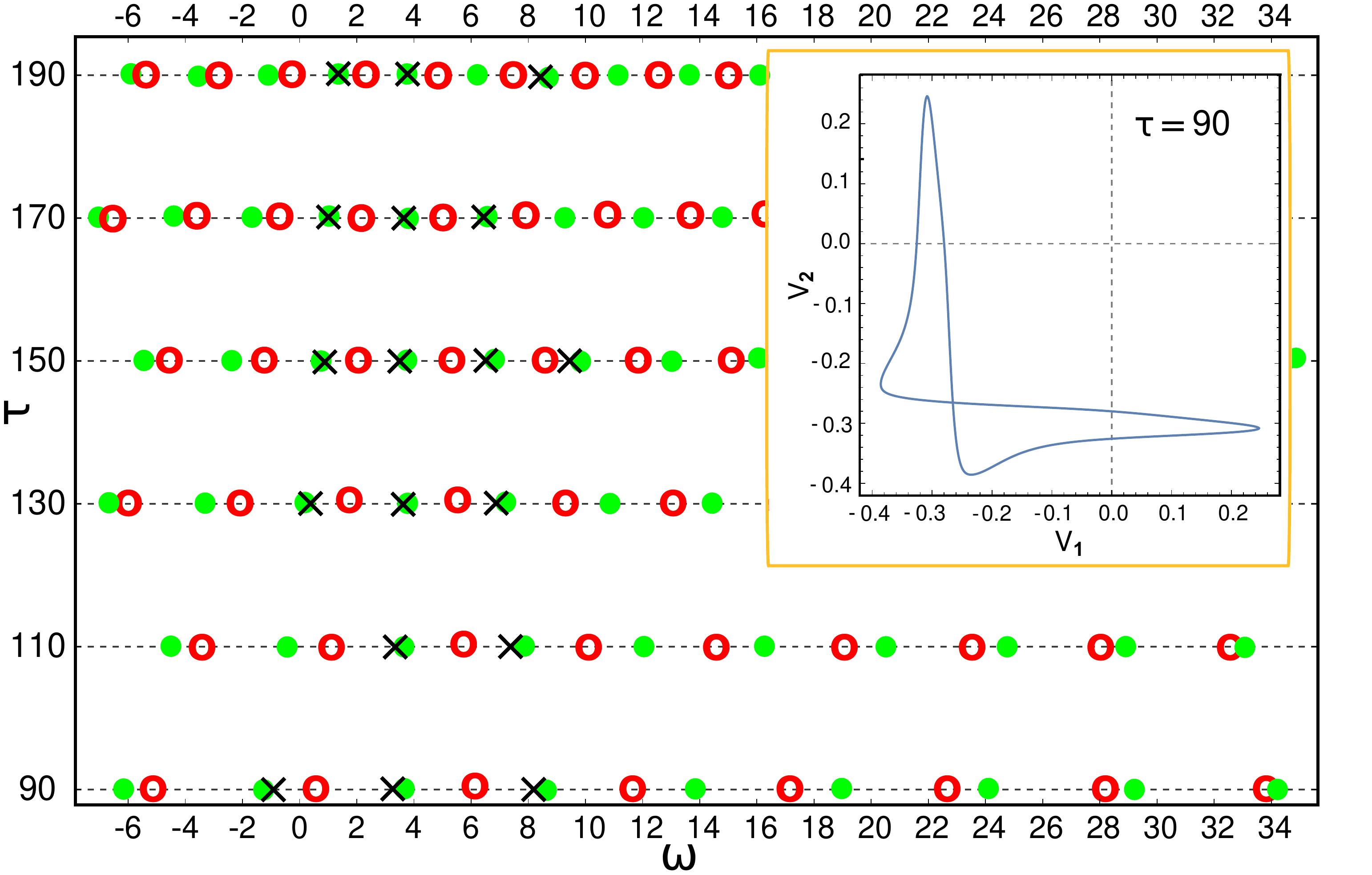}
\caption{Parameter set I. $\psi^*=\pi$}
		\label{fig_SI_tau_omega_psi_2}
	\end{subfigure}
	~\bigskip
	
	\begin{subfigure}{.5\textwidth}
		\centering
		\centering
	\includegraphics[width=1\linewidth]{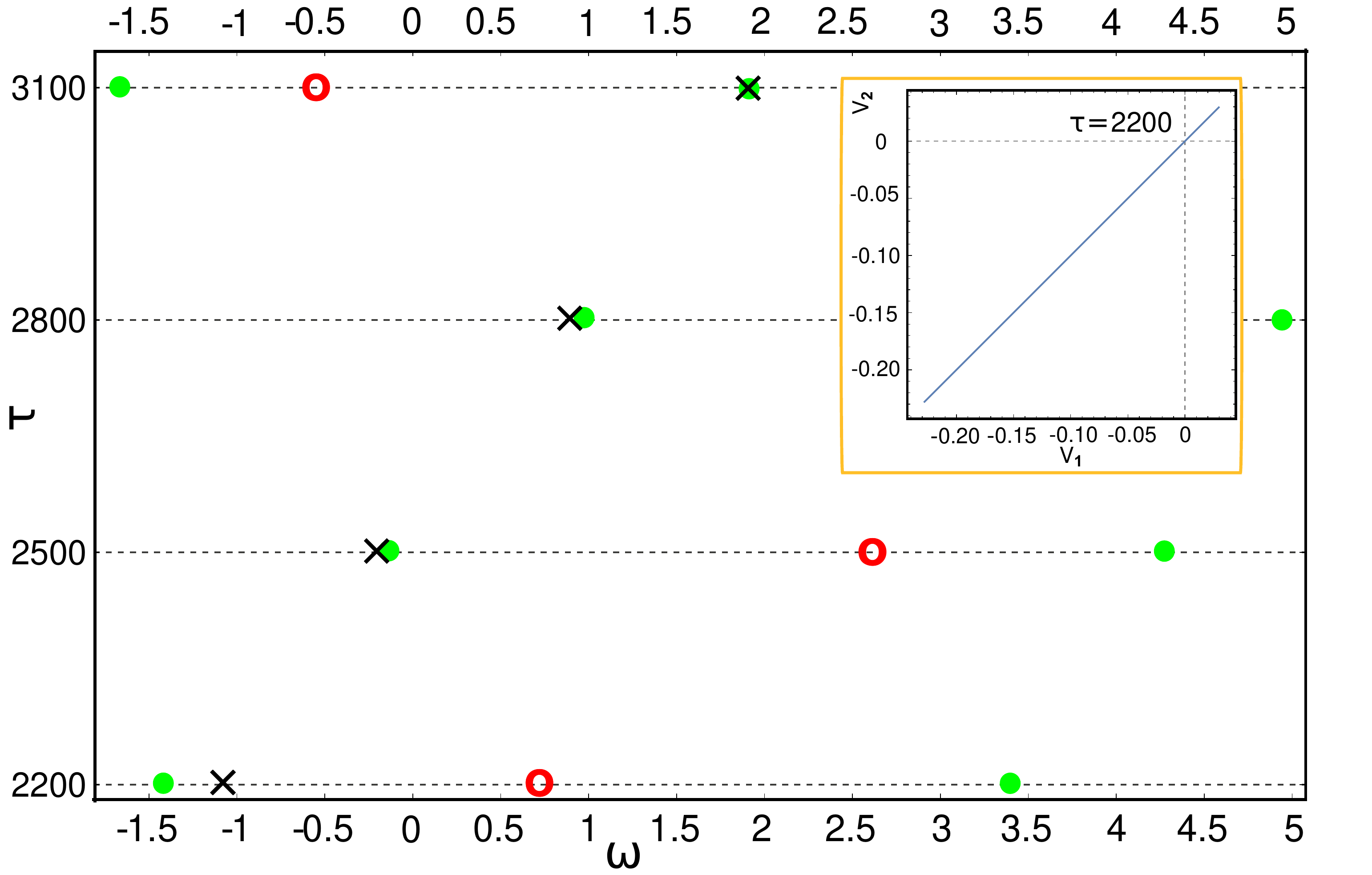}
		\caption{Parameter set II. $\psi^*=0$}
		\label{fig_SII_tau_omega_psi_1}
	\end{subfigure}%
	\begin{subfigure}{.5\textwidth}
		\centering
	\includegraphics[width=1\linewidth]{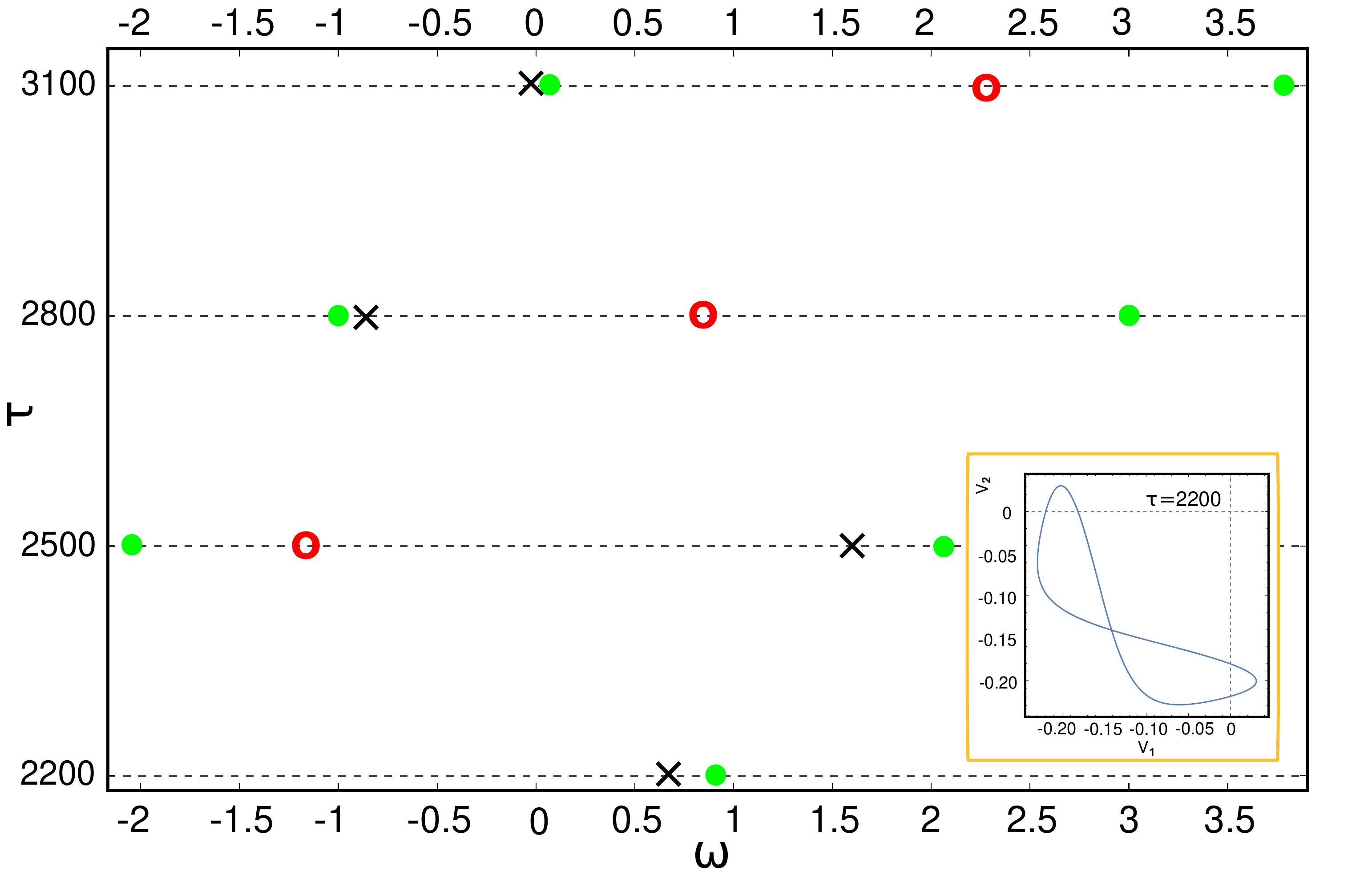}
		\caption{Parameter set II. $\psi^*=\pi$}
		\label{fig_SII_tau_omega_psi_2}
			\end{subfigure}
	\caption{ The circles   \textcolor{green}{$\CIRCLE$}/\textcolor{red}{$\Circle$} represent stable/unstable solutions to the phase model (\ref{phase_Mod}) corresponding to (\ref{MLmodelNor}) and $\boldsymbol{\times}$ represents the calculated  $\omega^*$ for each stable phase-locked periodic solution found by numerical integration of the full model (\ref{MLmodelNor}) with parameter sets I and II. The insets are in-phase and anti-phase periodic solutions of (\ref{MLmodelNor}). 
	The initial conditions for all simulations was of the form (\ref{InCoform}). For
the insets the values of $\left(v_{10}, w_{10}, v_{20}, w_{20}\right)^T$ are as follows. (a) $(1.53422,-4.42364,1.58103,-4.12258)^T$ (b) $(1.57882,4.1827,2.78262,0.358165)^T$ (c) $(1.892, -0.296437, -1.05518, 1.09985)^T$ (d) $(-1.72448,-1.46442,4.4848,1.31822)^T$} 
	\label{fig_SI_tau_omega_psi}
\end{figure}

\begin{table}[ht]
	\centering
	\scalebox{0.90}{
		\begin{tabular}{|c|c|c|c|c|c|c|c|c|c|c|c|}
			\hline
			 \multicolumn{12}{|c|}{\cellcolor[HTML]{C0C0C0}{$\psi^*=0$}}\\ \cline{1-12}
			\multirow{2}{*}{} & \multicolumn{3}{c|}{$\tau=90$} &  & \multicolumn{3}{c|}{$\tau=110$} &  \multirow{4}{*}{}  & \multicolumn{3}{c|}{$\tau=130$} \\ \cline{2-4}\cline{6-8} \cline{10-12} 
			&     {\scriptsize Phase Model}   &   {\scriptsize Full Model}     &  {\scriptsize ${\rm E_N}$}   &  &     {\scriptsize Phase Model}   &   {\scriptsize Full Model}     & {\scriptsize ${\rm E_N}$}       &  &        {\scriptsize Phase Model}   &   {\scriptsize Full Model}     &  {\scriptsize ${\rm E_N}$}     \\ \cline{1-4}\cline{6-8} \cline{10-12} 
			\multirow{2}{*}{$\omega^*$} & \small   $1.1446$   &  \small   $0.988971$   & \small $0.1574$      &  &  \small  $1.61521$   &   \small   $1.44374$   &   \small $0.1188$    &   &   \small  $-1.55286$   &  \small   $−1.31666$  &   \small  $-0.1794$    \\ \cline{2-4} \cline{6-8}\cline{10-12} 
			& \small  $6.17015$     & \small  $5.68396$    & \small   $0.0855$    &  & \small   $9.95867$   &  \small $9.67259$    & \small  $0.0296$     &  &  \small  $5.48587$    &   \small  $5.16611$   &  \small  $0.0619$    \\ \hline
			\multirow{2}{*}{} & \multicolumn{3}{c|}{$\tau=150$} &  & \multicolumn{3}{c|}{$\tau=170$} & \multirow{4}{*}{}  & \multicolumn{3}{c|}{$\tau=190$} \\ \cline{2-4}\cline{6-8} \cline{10-12} 
			&     {\scriptsize Phase Model}   &   {\scriptsize Full Model}     &  {\scriptsize ${\rm E_N}$}   &    &     {\scriptsize Phase Model}   &   {\scriptsize Full Model}     &  {\scriptsize ${\rm E_N}$}       &  &        {\scriptsize Phase Model}   &   {\scriptsize Full Model}     &  {\scriptsize ${\rm E_N}$}     \\ \cline{1-4}\cline{6-8} \cline{10-12} 
			\multirow{2}{*}{$\omega^*$} &   \small  $-0.860951$   &   \small $−0.766851$    &   \small $-0.1227$    &  &    \small  $2.38636$  &    \small$2.23525$    &   \small $0.0676$    &  &  \small $0.101849$     &   \small $0.092197$    & \small $0.1047$      \\ \cline{2-4}\cline{6-8} \cline{10-12} 
			&    \small  $2.19521$  &   \small $2.03459$    &   \small $0.0789$  & &  \small$5.11589$     &    \small$4.87837$   &   \small $0.0487$    &  &   \small$7.44601$     &   \small $7.16109$    &   \small $0.0398$    \\ \hline
						 \multicolumn{12}{|c|}{\cellcolor[HTML]{C0C0C0}{$\psi^*=\pi$}}\\ \cline{1-12}
\multirow{2}{*}{} & \multicolumn{3}{c|}{$\tau=90$} &  & \multicolumn{3}{c|}{$\tau=110$} &  \multirow{4}{*}{}  & \multicolumn{3}{c|}{$\tau=130$} \\ \cline{2-4}\cline{6-8} \cline{10-12} 
			&     {\scriptsize Phase Model}   &   {\scriptsize Full Model}     &  {\scriptsize ${\rm E_N}$}   &  &     {\scriptsize Phase Model}   &   {\scriptsize Full Model}     &  {\scriptsize ${\rm E_N}$}       &  &        {\scriptsize Phase Model}   &   {\scriptsize Full Model}     &  {\scriptsize ${\rm E_N}$}     \\ \cline{1-4}\cline{6-8} \cline{10-12} 
			\multirow{2}{*}{$\omega^*$} & \small $-1.32864$  &  \small    $-1.08976$   & \small $-0.2192$      &  &  \small  $3.68511$   &   \small   $3.39257$   &   \small $0.0862$    &   &   \small $0.193408$   &  \small   $0.169843$ &   \small  $0.1387$    \\ \cline{2-4} \cline{6-8}\cline{10-12} 
			& \small $8.70987$    & \small   $8.244$   & \small  $0.0565$    &  & \small   $7.86$  &  \small $7.36178$    & \small  $0.0677$    &  &  \small  $7.26467$    &   \small  $6.86785$   &  \small  $0.0578$   \\ \hline
			\multirow{2}{*}{} & \multicolumn{3}{c|}{$\tau=150$} &  & \multicolumn{3}{c|}{$\tau=170$} & \multirow{4}{*}{}  & \multicolumn{3}{c|}{$\tau=190$} \\ \cline{2-4}\cline{6-8} \cline{10-12} 
			&     {\scriptsize Phase Model}   &   {\scriptsize Full Model}     &  {\scriptsize ${\rm E_N}$}   &    &     {\scriptsize Phase Model}   &   {\scriptsize Full Model}     &  {\scriptsize ${\rm E_N}$}       &  &        {\scriptsize Phase Model}   &   {\scriptsize Full Model}     &  {\scriptsize ${\rm E_N}$}     \\ \cline{1-4}\cline{6-8} \cline{10-12} 
			\multirow{2}{*}{$\omega^*$} &   \small  $0.663926$   &   \small $0.600602$    &   \small $0.1054$   &  &    \small  $1.02812$ &    \small $0.946137$    &   \small $0.0867$   &  &  \small $3.76194$     &   \small $3.58466$    & \small  $0.0495$     \\ \cline{2-4}\cline{6-8} \cline{10-12} 
			&    \small  $9.92786$  &   \small $9.41424$   &   \small $0.0546$    &  &  \small $3.74932$    &    \small $3.55154$   &   \small $0.0557$   &  &   \small $8.67729$    &   \small $8.34016$    &   \small $0.0404$    \\ \hline
	\end{tabular}}
	\caption{\sloppy Comparison of $\omega^*$ between the phase  model prediction and the full model (\ref{MLmodelNor}) when $\psi^*=0,\pi$  with parameter set I. The quantity ${\rm E_N}$ is defined in \eqref{Nerror}.
  }
	\label{table_psi_0}
\end{table}

\begin{table}[ht]
	\centering
	\scalebox{1}{
		\begin{tabular}{|c|c|c|clc|c|c|}
			\hline
						 \multicolumn{8}{|c|}{\cellcolor[HTML]{C0C0C0}{$\psi^*=0$}}\\ \cline{1-8}
			\multirow{2}{*}{} & \multicolumn{3}{c|}{$\tau=2200$}                                                                                                                         & \multicolumn{1}{l|}{} & \multicolumn{3}{c|}{$\tau=2500$}                                                                                                    \\ \cline{2-4} \cline{6-8} 
			
			&{\scriptsize Phase Model} & {\scriptsize Full Model}& \multicolumn{1}{c|}{\scriptsize ${\rm E_N}$} & \multicolumn{1}{l|}{} &{\scriptsize Phase Model} & {\scriptsize Full Model}& {\scriptsize ${\rm E_N}$} \\ \cline{1-4} \cline{6-8} 
			$\omega^*$          & \small $-1.418638$           & \small $-1.10245$          & \multicolumn{1}{c|}{\small $-0.2868$}            & \multicolumn{1}{l|}{} & \small $-0.159684$          & \small $-0.223359$         & \small $0.2851$           \\ \hline
			\multirow{2}{*}{} & \multicolumn{3}{c|}{$\tau=2800$}                                                                                                                         & \multicolumn{1}{l|}{} & \multicolumn{3}{c|}{$\tau=3100$}                                                                                                    \\ \cline{2-4} \cline{6-8} 
			
			&{\scriptsize Phase Model} & {\scriptsize Full Model}& \multicolumn{1}{c|}{{\scriptsize ${\rm E_N}$}} & \multicolumn{1}{l|}{} &{\scriptsize Phase Model} & {\scriptsize Full Model}& {\scriptsize ${\rm E_N}$} \\ \cline{1-4} \cline{6-8} 
			$\omega^*$          & \small $0.9584587$           & \small $0.720141$          & \multicolumn{1}{c|}{\small $0.3309$}            & \multicolumn{1}{l|}{} & \small $1.914776$            & \small $1.880915$           & \small $0.018$            \\ \hline
									 \multicolumn{8}{|c|}{\cellcolor[HTML]{C0C0C0}{$\psi^*=\pi$}}\\ \cline{1-8}
	\multirow{2}{*}{} & \multicolumn{3}{c|}{$\tau=2200$}                                                                                                                         & \multicolumn{1}{l|}{} & \multicolumn{3}{c|}{$\tau=2500$}                                                                                                    \\ \cline{2-4} \cline{6-8} 
			&{\scriptsize Phase Model} & {\scriptsize Full Model}& \multicolumn{1}{c|}{\scriptsize ${\rm E_N}$} & \multicolumn{1}{l|}{} &{\scriptsize Phase Model} & {\scriptsize Full Model}& {\scriptsize ${\rm E_N}$} \\ \cline{1-4} \cline{6-8} 
			$\omega^*$          & \small $0.913646$           & \small $0.647528$          & \multicolumn{1}{c|}{\small $0.411$}            & \multicolumn{1}{l|}{} & \small $2.06122$          & \small $1.58187$         & \small $0.303$           \\ \hline
			\multirow{2}{*}{} & \multicolumn{3}{c|}{$\tau=2800$}                                                                                                                         & \multicolumn{1}{l|}{} & \multicolumn{3}{c|}{$\tau=3100$}                                                                                                    \\ \cline{2-4} \cline{6-8} 
			
			&{\scriptsize Phase Model} & {\scriptsize Full Model}& \multicolumn{1}{c|}{{\scriptsize ${\rm E_N}$}} & \multicolumn{1}{l|}{} &{\scriptsize Phase Model} & {\scriptsize Full Model}& {\scriptsize ${\rm E_N}$} \\ \cline{1-4} \cline{6-8} 
			$\omega^*$          & \small $-1.011297$           & \small $-0.880915$          & \multicolumn{1}{c|}{\small $-0.148$}            & \multicolumn{1}{l|}{} & \small $0.0496789$            & \small $-0.03801$           & \small $-2.307$            \\ \hline
	\end{tabular}}
	\caption{\sloppy Comparison of $\omega^*$ between the phase  model prediction and the full model (\ref{MLmodelNor}) when $\psi^*=0,\pi$  with parameter set II. The quantity The quantity ${\rm E_N}$ is defined in \eqref{Nerror}.}
	\label{table_psi_0_S2}
\end{table}

\subsection{Out-of-phase solutions}
\label{Sec3_2}

\review{To find  phase-locked solutions other than the in-phase and anti-phase solutions, we fix $\tau$ and solve 
\beqn
\omega^*&=\frac{1}{\Omega} H_{II}(\psi^*-\omega^*\eta-\Omega\tau),\\
\omega^*&=\frac{1}{\Omega} H_{II}(-\psi^*-\omega^*\eta-\Omega\tau)
\label{sysA1}
\eeqn
for $\omega^*$ and $\psi^*$. 
Figure \ref{Contour} shows all solutions to (\ref{sysA1}) when $\tau=100\backslash2205$ with the parameter set I$\backslash$II. 
As seen for the existence  of in-phase and anti-phase solutions in Section \ref{Sec3_1},   the number of phase-locked solutions with the parameter set I is bigger that II. For the purpose of clarity in the  bifurcation figures,   we consider the parameter set II in this section.}

In Figure \ref{Fig_new}, we observe that there are four non-trivial phase-locked solutions:  
$\psi^*_1=1.85996$ 
and $\psi^*_2=2.13981$ in $(0,\pi)$; and $\psi^*_3=2\pi-\psi^*_1=4.42323$ and $\psi^*_4=2\pi-\psi^*_2=4.14338$ in $(\pi,2\pi)$. Moreover, we have $\omega^*_1=\omega^*_3=0.14125$ and $\omega^*_2=\omega^*_4=0.14125$ where $\omega^*_i$ is the corresponding  frequency deviation  to $\psi^*_i$, $i=1,2,3,4$.
 This agrees with Proposition \ref{prop_1}.

\begin{figure}[hbt!]
	\centering
	\begin{subfigure}{0.5\textwidth}
		\centering
		\includegraphics[width=0.67\textwidth]{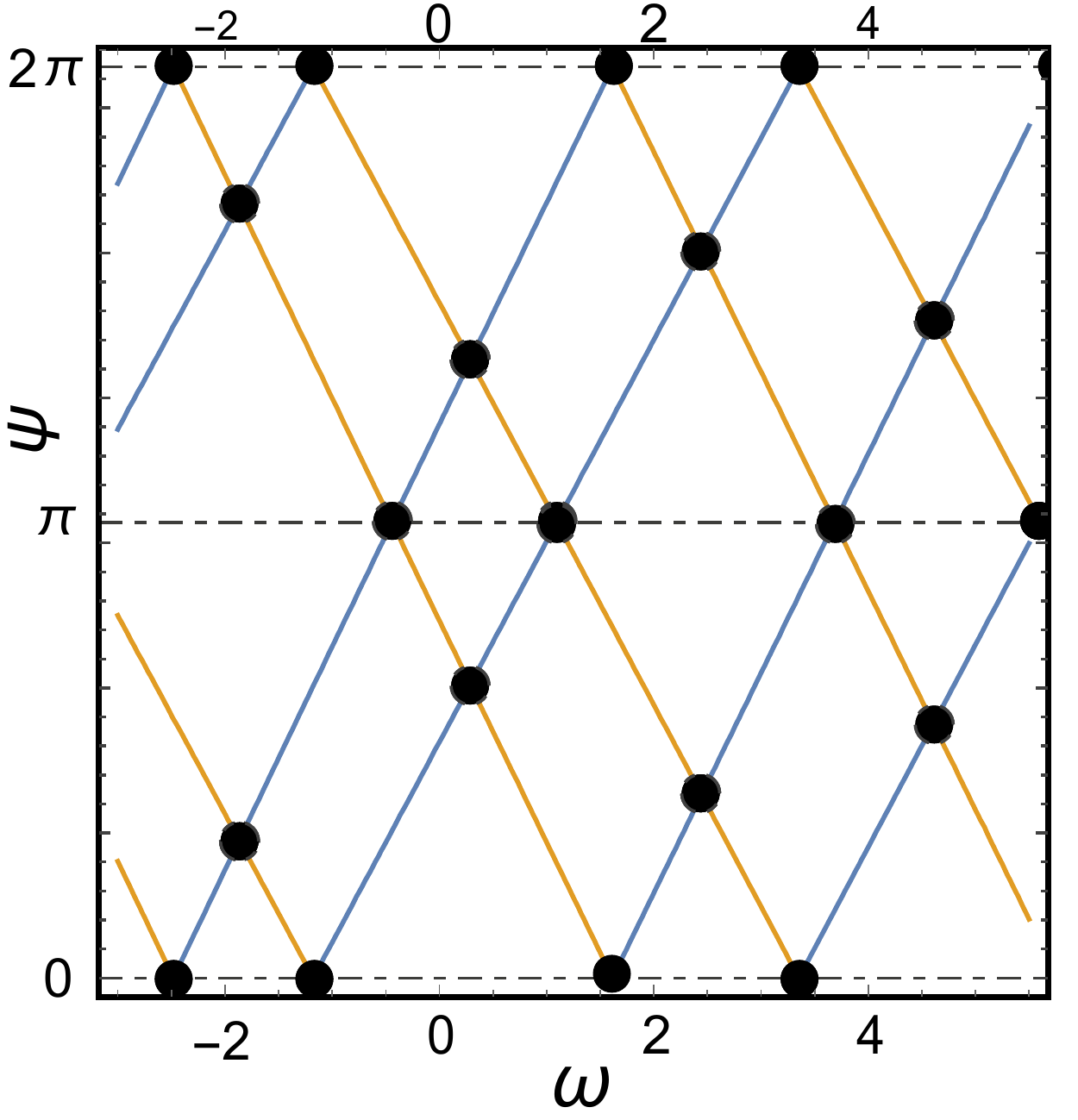}
		\caption{Parameter set I with $\tau=100$.}
		 \label{Fig_new_1}
	\end{subfigure}%
	\begin{subfigure}{.5\textwidth}
		\centering
			\includegraphics[width=1\textwidth]{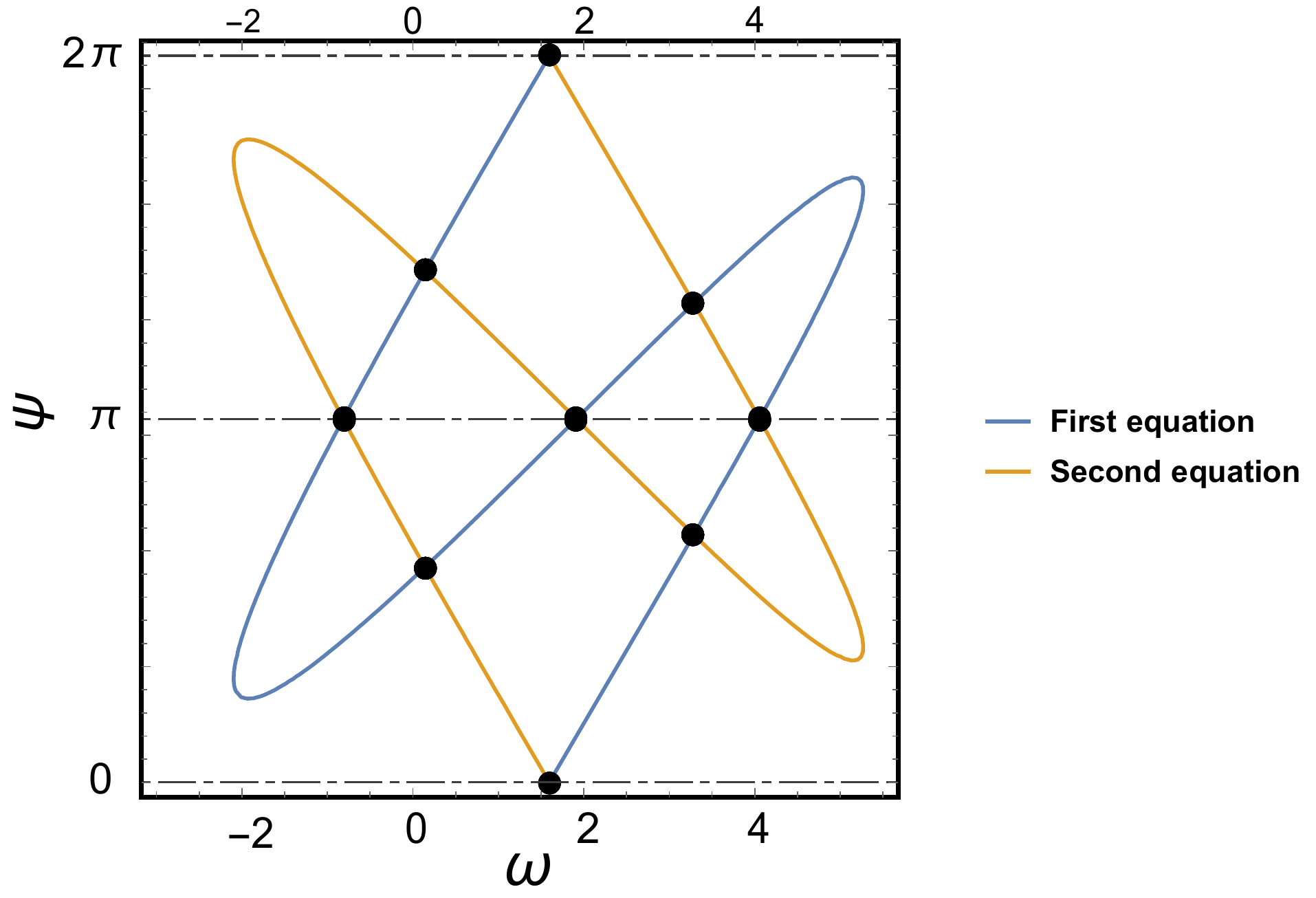}
		\caption{Parameter set II with  $\tau=2205$.}
		 \label{Fig_new}
	\end{subfigure}
		\caption{Contour plot of the equations in (\ref{sysA1}) to show the graphical solutions of (\ref{sysA1}) with fixed $\tau$.}
			\label{Contour}
\end{figure}

\review{In Figure \ref{New_Fig9}a, we plot all solutions of system (\ref{sysA1}) in $\tau\psi-$plane and mark the stability using the criteria in Section \ref{sec_stability}. Note that since this representation suppresses $\omega^*$, the multiple in-phase or anti-phase solutions which occur for
particular values of $\tau$ in Figure \ref{fig_SI_tau_omega_psi} are superimposed.
As $\tau$ varies, we observe that a stable solution corresponding to $\psi^*=0,\pi$ always exists with the appearance of an unstable solution in disjoint intervals of $\tau$,  while all the out-of-phase solutions are unstable. More precisely, for the in-phase solution, as $\tau$ increases, we notice that an unstable solution
disappears at  $\tau\approx 2203$, exists between $\tau\approx2207.5$ and 
$\tau\approx2217$, and  reappears at $\tau\approx 2221$.  The same behaviour occurs 
for the anti-phase solution at different values of $\tau$. Near the appearance and
disappearance of these unstable solutions the unstable out-of-phase solutions appear
and disappear.
As we observe in Figure \ref{Contour}, there are multiple solutions $(\omega^*,\psi^*)$ of (\ref{sysA1}) when $\tau$ is fixed. 
To study the creation and destruction of solutions further,
we take particular values for $\tau$ and show  all solutions in the blue rectangles from Figure \ref{New_Fig9}a in the $\omega\psi-$plane, see Figures \ref{New_Fig9}b$-$\ref{New_Fig9}i.
We now see that there are pitchfork bifurcations where 
a stable in-phase or 
anti-phase solution becomes unstable as two unstable out-of-phase solutions merge 
together, 
see Figures \ref{New_Fig9}b$-$\ref{New_Fig9}c  and \ref{New_Fig9}f$-$\ref{New_Fig9}g.
This correspond
to the values $\tau_2^*$ discussed above.
Moreover, there are saddle-node bifurcations  where two unstable in-phase or 
anti-phase solutions collide then vanish, see Figures \ref{New_Fig9}d$-$\ref{New_Fig9}e and \ref{New_Fig9}h$-$\ref{New_Fig9}i.
This corresponds to the value $\tau_1^*$ discussed above. 
For other parameter values, we observe the opposite sequence of bifurcations: two unstable in-phase or anti-phase solutions are created by a saddle-node bifurcation after which one gets stabilized by a pitchfork bifurcation involving two unstable out-of-phase solutions.
All the bifurcations are as predicted for the general model in Section \ref{bif:sec}.
We did not observe any saddle-node bifurcations of out-of-phase solutions for this
parameter set.}

\review{To help understand these bifurcations, we plot solutions in the $\tau\omega-$plane and the solutions near $\psi=\pi$ in the $\tau\omega\psi-$space in Figures \ref{fig1_B}$-$\ref{fig1_C}, respectively.
Considering the case $\psi^*=\pi$, we observe that:}
\begin{itemize}
    \item the pitchfork bifurcation occurs when two unstable out-of-phase solutions  merge together with one stable anti-phase solution \textcolor{green}{$\CIRCLE$} to produce one unstable anti-phase solution \textcolor{red}{$\blacksquare$},
    
    \item the saddle-node bifurcation occurs when the created unstable anti-phase solution \textcolor{red}{$\blacksquare$} in the above collides  with another unstable anti-phase \textcolor{red}{$\blacksquare$} and both vanish.
\end{itemize}

\begin{figure}[hbt!]
		\centering
			\includegraphics[width=1\textwidth]{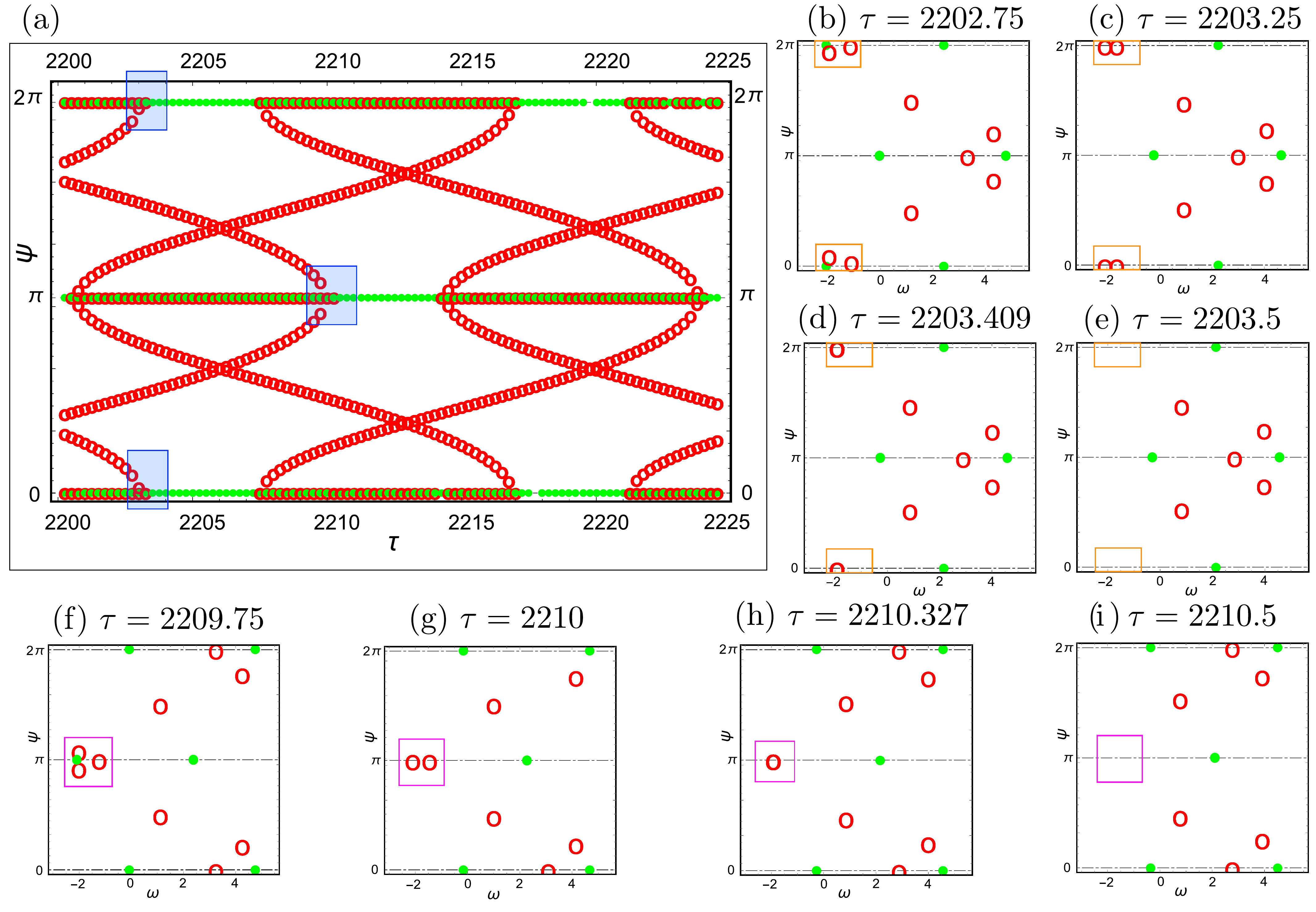}
\caption{The solutions of phase model (\ref{phase_Mod}) corresponding to (\ref{MLmodelNor}) 
	in the blue rectangles in Figure \ref{fig1} in $\omega\psi-$plane.
	The circles   \textcolor{green}{$\CIRCLE$}/\textcolor{red}{$\Circle$} represent stable/unstable solutions  of (\ref{sysA1}).} 	
	\label{New_Fig9}
	 \end{figure}

\begin{figure}[hbt!]
	\centering
	\begin{subfigure}{.54\textwidth}
		\centering
		\includegraphics[width=1\textwidth]{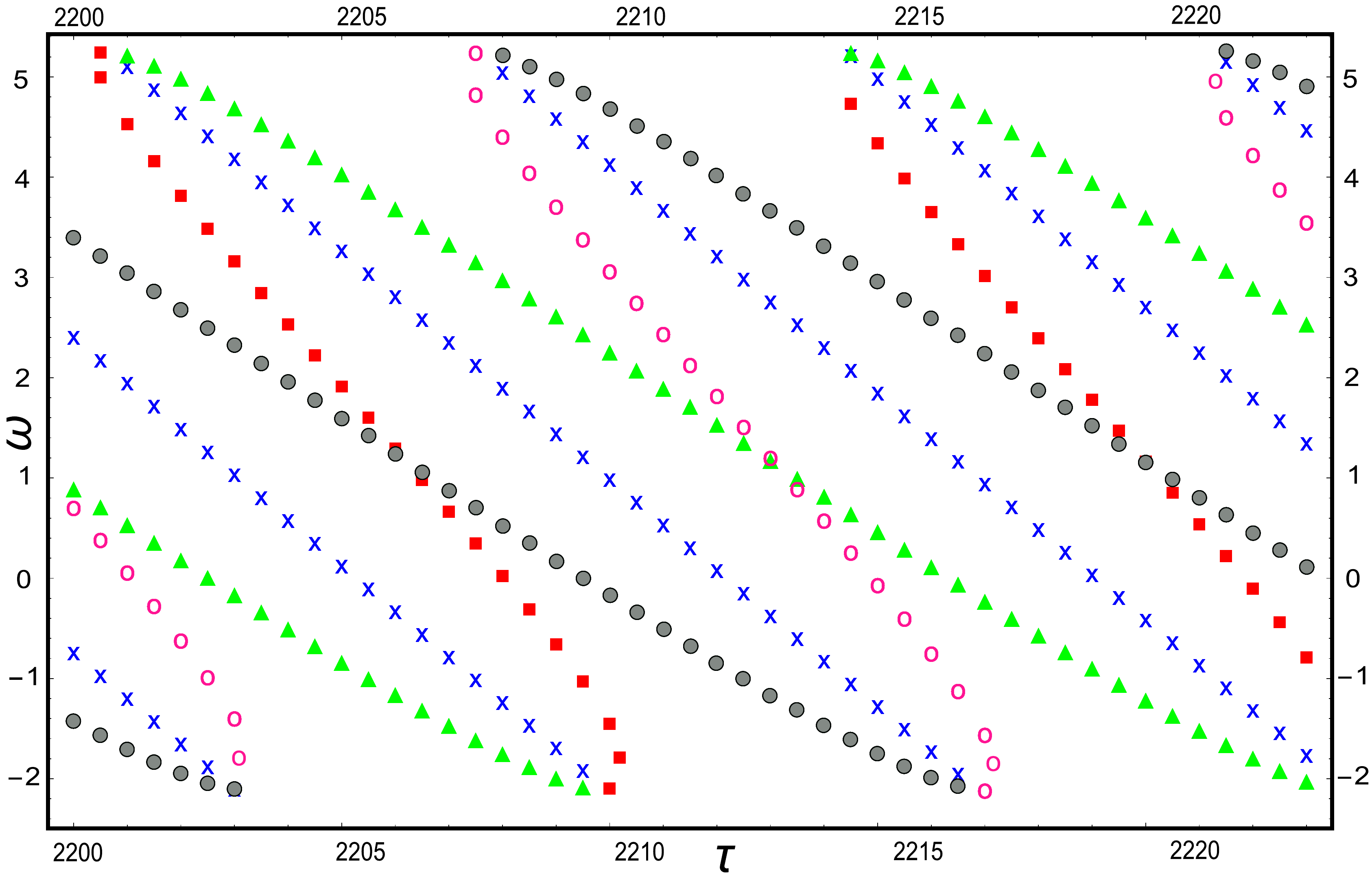}
		\caption{$\omega$ vs $\tau$}
		\label{fig1_B}
	\end{subfigure}
	\begin{subfigure}{0.37\textwidth}
		\centering
		\includegraphics[width=0.9\textwidth]{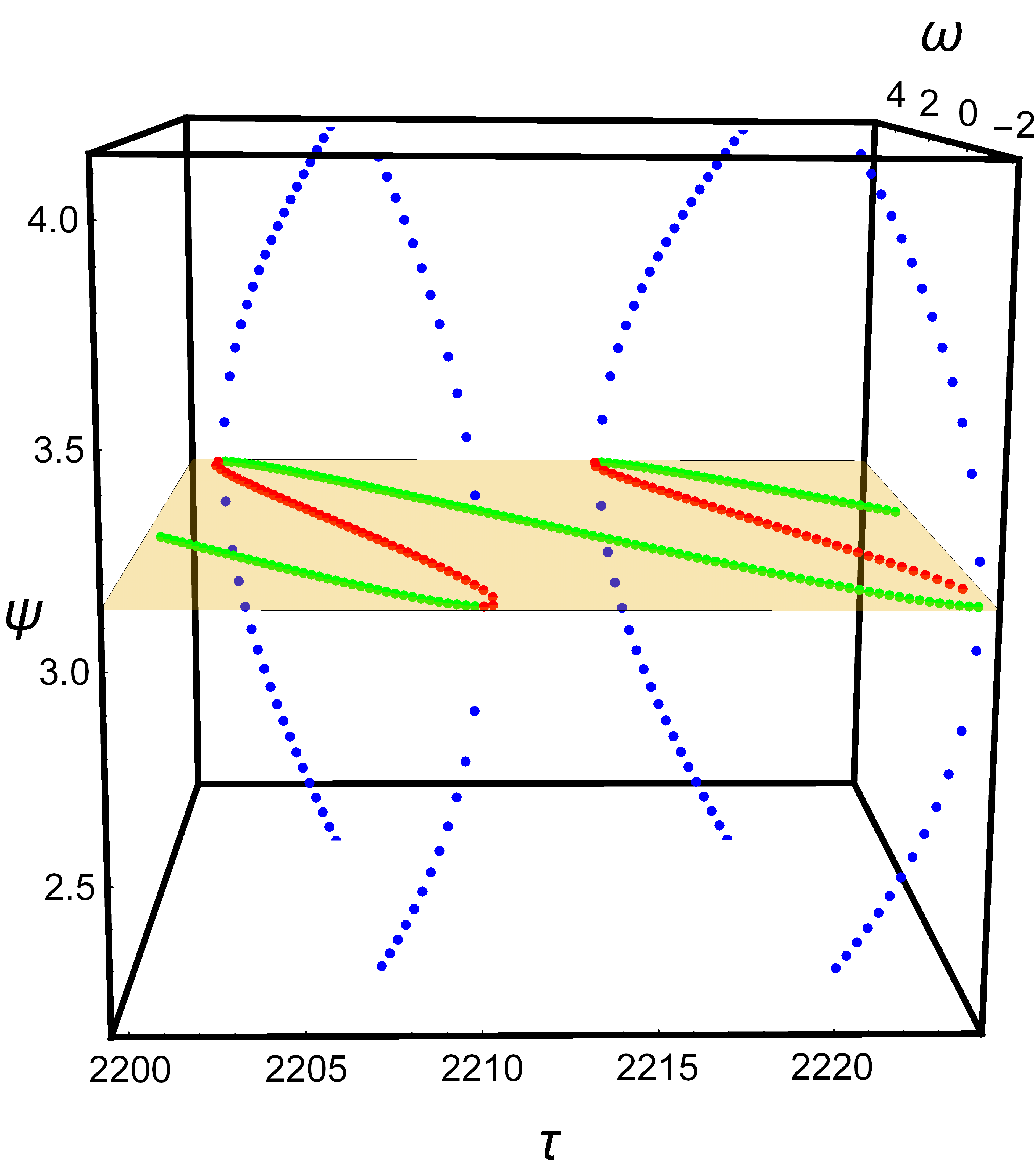}
		\caption{$\tau$, $\omega$ and $\psi$}
		\label{fig1_C}
	\end{subfigure}
	\caption{Numerical bifurcation diagram with respect to $\tau\in(2200,2225)$ for
the solutions of the phase model (\ref{phase_Mod}) corresponding to the Morris-Lecar model (\ref{MLmodelNor}) with parameter set II. The circles   \textcolor{gray}{$\CIRCLE$}/\textcolor{pink}{$\textrm{\ding{98}}$} represent stable/unstable in-phase solutions,  \textcolor{green}{$\blacktriangle$}/\textcolor{red}{$\blacksquare$} represents stable/unstable anti-phase solutions, and  \textcolor{blue}{$\times$} represents unstable out-of-phase solutions of (\ref{sysA1}). } 
	\label{fig1}
\end{figure}

\subsection{Small delay}

\label{Sec3_3}

In this subsection, we consider small time delay, in the sense that, $\Omega \tau=\mathcal{O}(1)$ with respect to the small parameter $\epsilon$, and compare the results with \cite{campbell2012phase}  where the authors studied this case using the parameter set II. 
In \cite{campbell2012phase}, the authors studied the dynamics of  the phase model corresponding to the full model (\ref{Full_Mod}) without introducing the frequency deviation in their analysis because 
the time delay $\eta$  was neglected in the phase model when $\Omega \tau=\mathcal{O}(1)$. We have stated some results from \cite{campbell2012phase} in Section \ref{Sec2_small_Delay}.

As in the previous section we solve (\ref{omega0}) and (\ref{omegapi}) to find
$\omega^*$ for the in-phase and anti-phase solutions and \eqref{sysA1} to find
$(\psi^*,\omega^*)$  for the out-of-phase solutions. We choose $\tau\in (0,15)$,
which is similar to the range chosen by \cite{campbell2012phase}.
In contrast with the results of the last section, here we observe that for 
$\psi^*=0,\pi$ there is a {\em unique} solution $\omega^*$ 
for each $\tau$ in the range we considered. 
This agrees with the prediction of the phase model in Section \ref{Sec2_small_Delay}.
We describe our results in more detail below.
 
\begin{figure}[hbt!]
	\centering
		\begin{subfigure}{0.33\textwidth}
		\centering
		\includegraphics[width=0.93\textwidth]{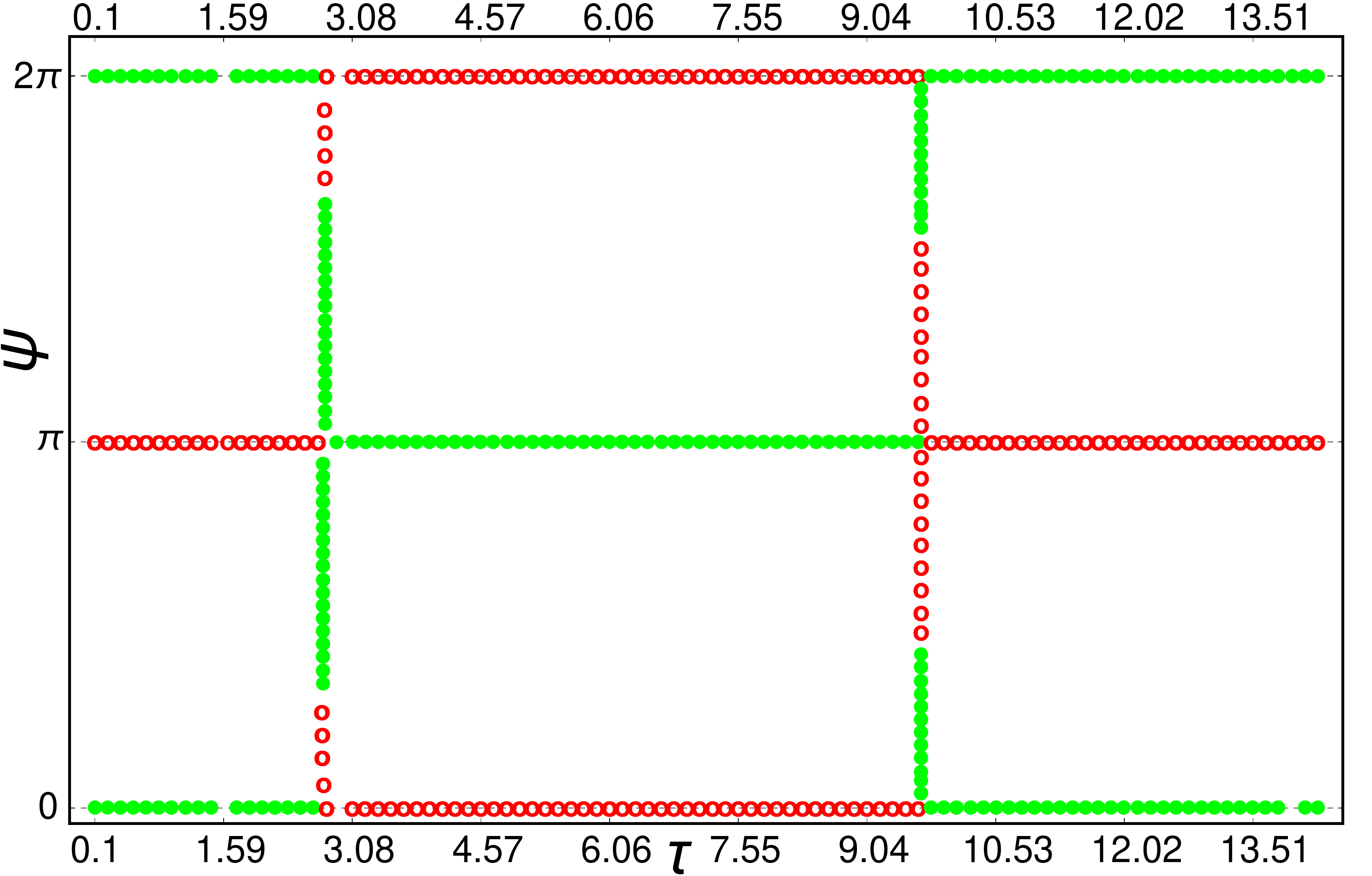}
		\caption{$\tau\in(0,15)$}
		\label{fig2_A}
	\end{subfigure}\hspace{-0.5cm}
\begin{subfigure}{.33\textwidth}
		\centering
		\includegraphics[width=0.95\textwidth]{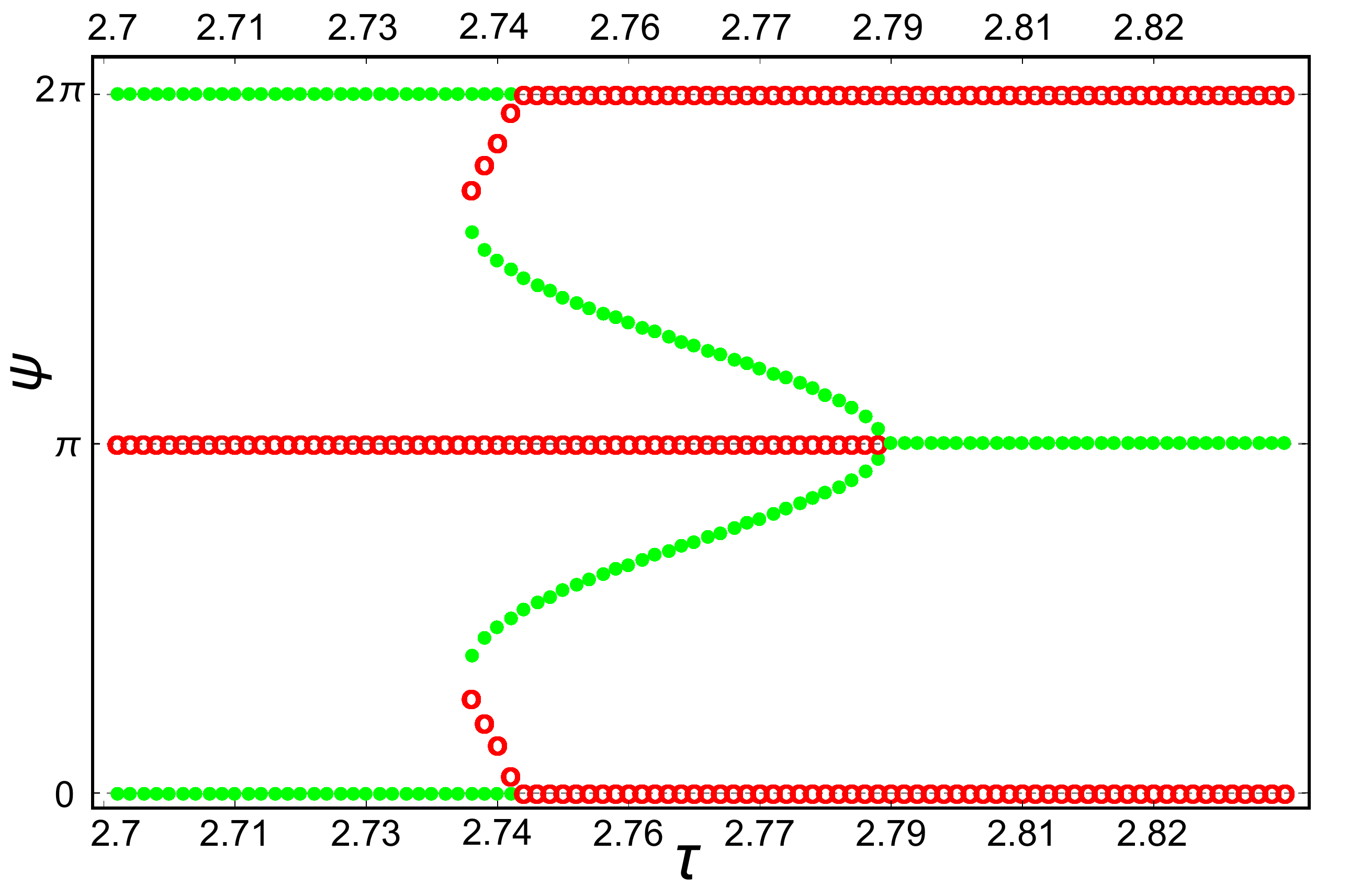}
		\caption{$\tau\in(2.7,2.84)$}
		\label{fig2_B}
	\end{subfigure}\hspace{-0.5cm}
	\begin{subfigure}{.33\textwidth}
		\centering
		\includegraphics[width=0.9\textwidth]{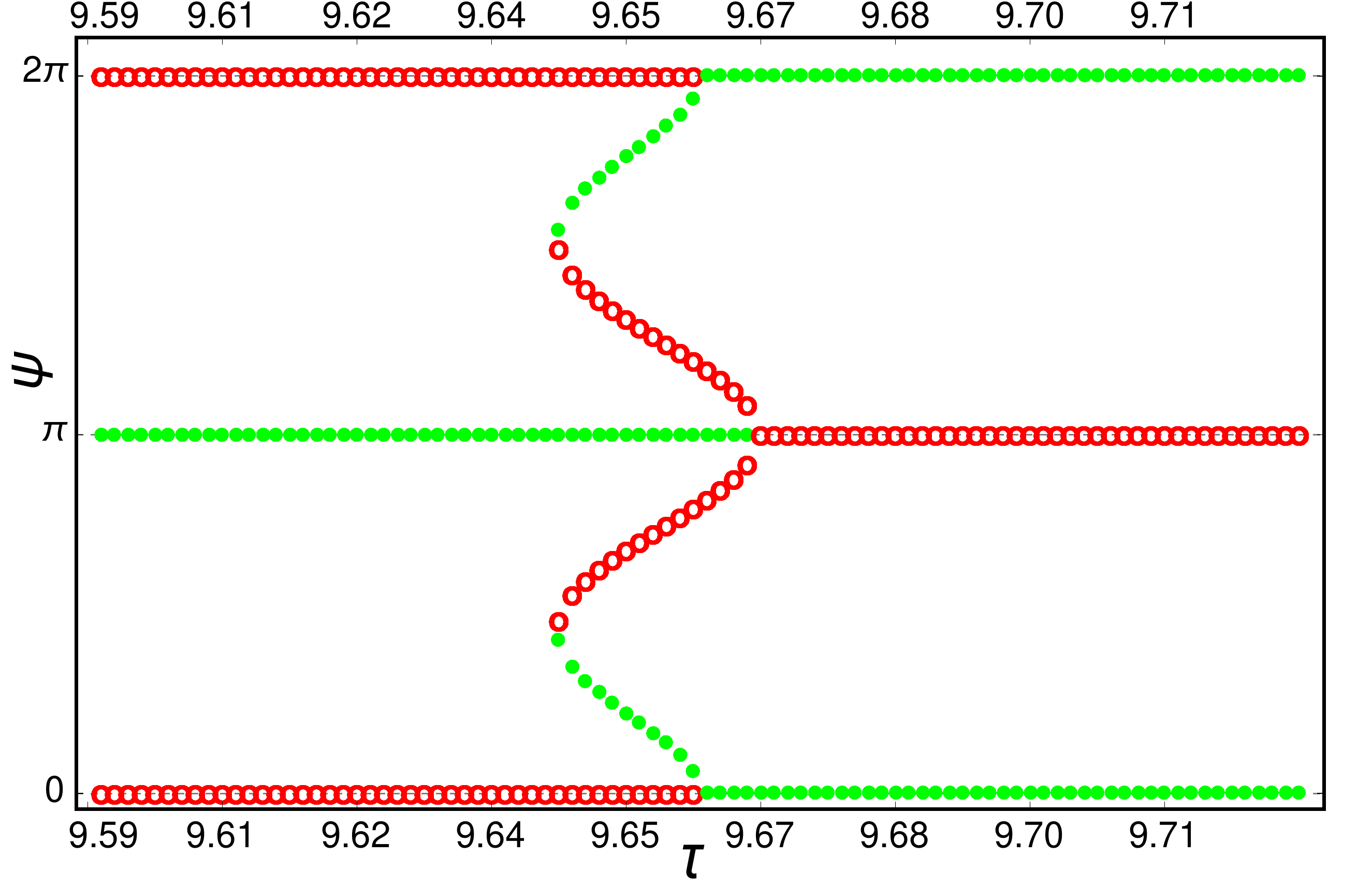}
		\caption{$\tau\in(9.59,9.73)$}
		\label{fig2_C}
	\end{subfigure}	
		\caption{ Numerical bifurcation diagram with respect to $\tau\in(0,15)$ for
the phase model (\ref{phase_Mod}) corresponding to the Morris-Lecar model (\ref{MLmodelNor}) with parameter set II. The circles   \textcolor{green}{$\CIRCLE$}/\textcolor{red}{$\Circle$} represent stable/unstable solutions in the phase model (\ref{sysA1}).} 	\label{fig2}
\end{figure}

In Figure \ref{fig2}, we plot the in-phase and anti-phase solutions as $\tau$ varies in $(0,15)$ in the $\tau\psi-$plane. We note that there is similar behaviour in Figure \ref{fig2_A} and  \cite[Figure 4b]{campbell2012phase}. The in-phase and anti-phase solutions change stability as $\tau$ increases and their stabilities appear to
be the opposite of each other.  To examine the behaviour near changes of stability, 
in Figures \ref{fig2_B}$-$\ref{fig2_C}  we show the bifurcation diagrams zoomed close
to the two switching points.  We see that the transition from
stable in-phase solution to stable anti-phase solution involves
two pitchfork bifurcations and one saddle-node bifurcation of out-of-phase
solutions, which agrees with  \cite{campbell2012phase}.
Figure \ref{fig21} shows this behaviour when the solutions are plotted in the 
$\tau\omega-$plane.
\review{Furthermore, we observe  in Figures \ref{fig2_B}$-$\ref{fig2_C} that there are small intervals of $\tau$ where bistability occurs.
Figure \ref{Fig_v1v2_small_delay} shows the coexistence of stable anti-phase and out-of-phase solutions.}

 \begin{figure}[hbt!]
		\centering
			\includegraphics[width=0.7\textwidth]{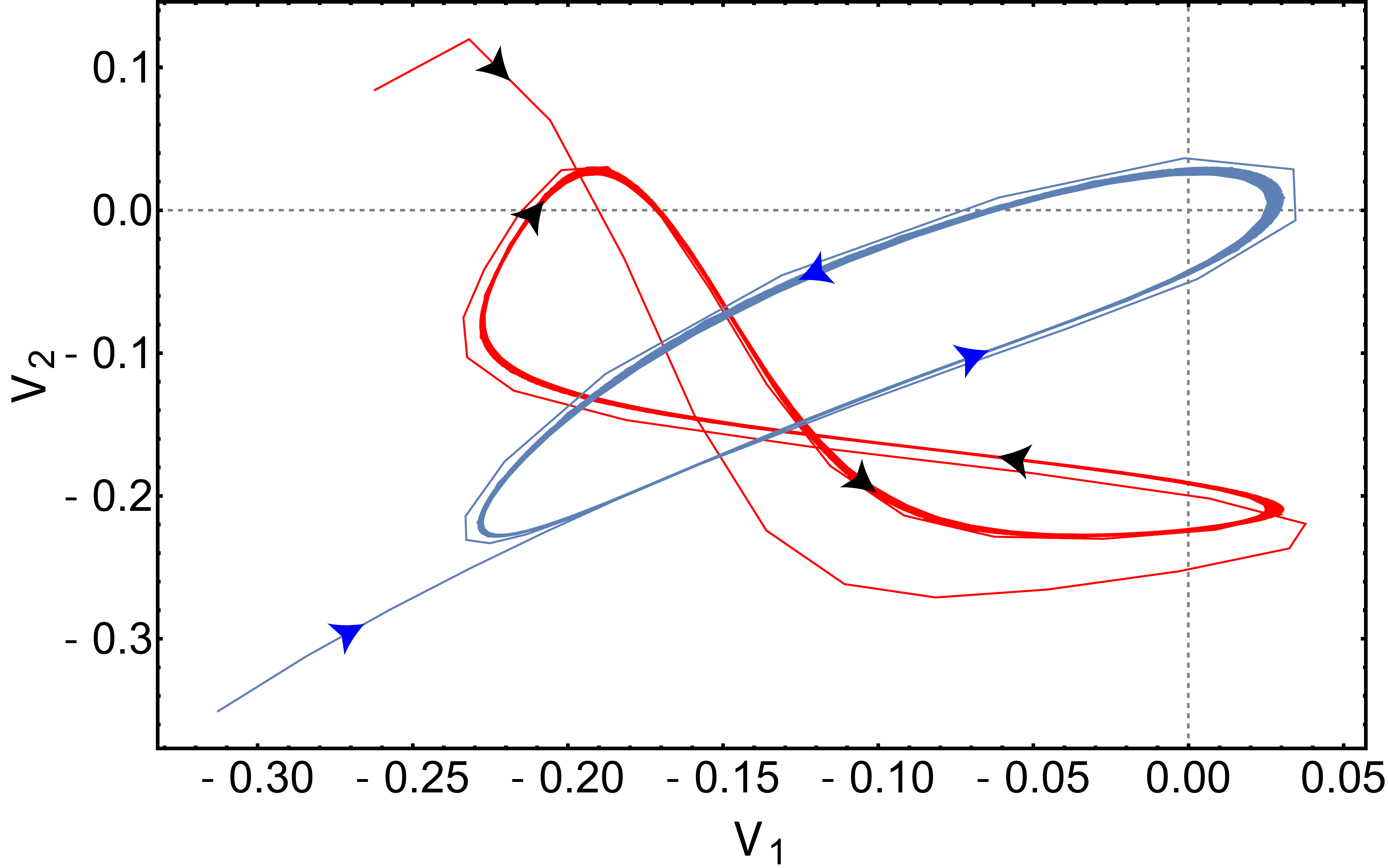}
				\caption{The coexistence of stable anti-phase (red) and out-of-phase  (blue) solutions of (\ref{MLmodelNor})  when $\tau=9.661$ with parameter sets II. We take different initial conditions: $(0.664192, 0.204054, 5.58914, 0.762568)^T$ for red curve and $(-0.883364, -0.200879, -0.686477, -0.989329)^T$ for blue curve.}
	 \label{Fig_v1v2_small_delay}
	 \end{figure}

\begin{remark}
The results in this section are consistent with the results in  
\cite{izhikevich1998phase}, which indicate that a phase model 
where the time delay enters as a phase shift is accurate
when $\tau$ is small in the full model (\ref{Full_Mod}) in 
the sense that \review{$\Omega\tau=\mathcal{O}(1)$   with respect to $\epsilon$ for $0<\epsilon\ll 1$.}
\end{remark}

\begin{figure}[hbt!]
		\centering
\begin{subfigure}{0.33\textwidth}
		\centering
		\includegraphics[width=.93\textwidth]{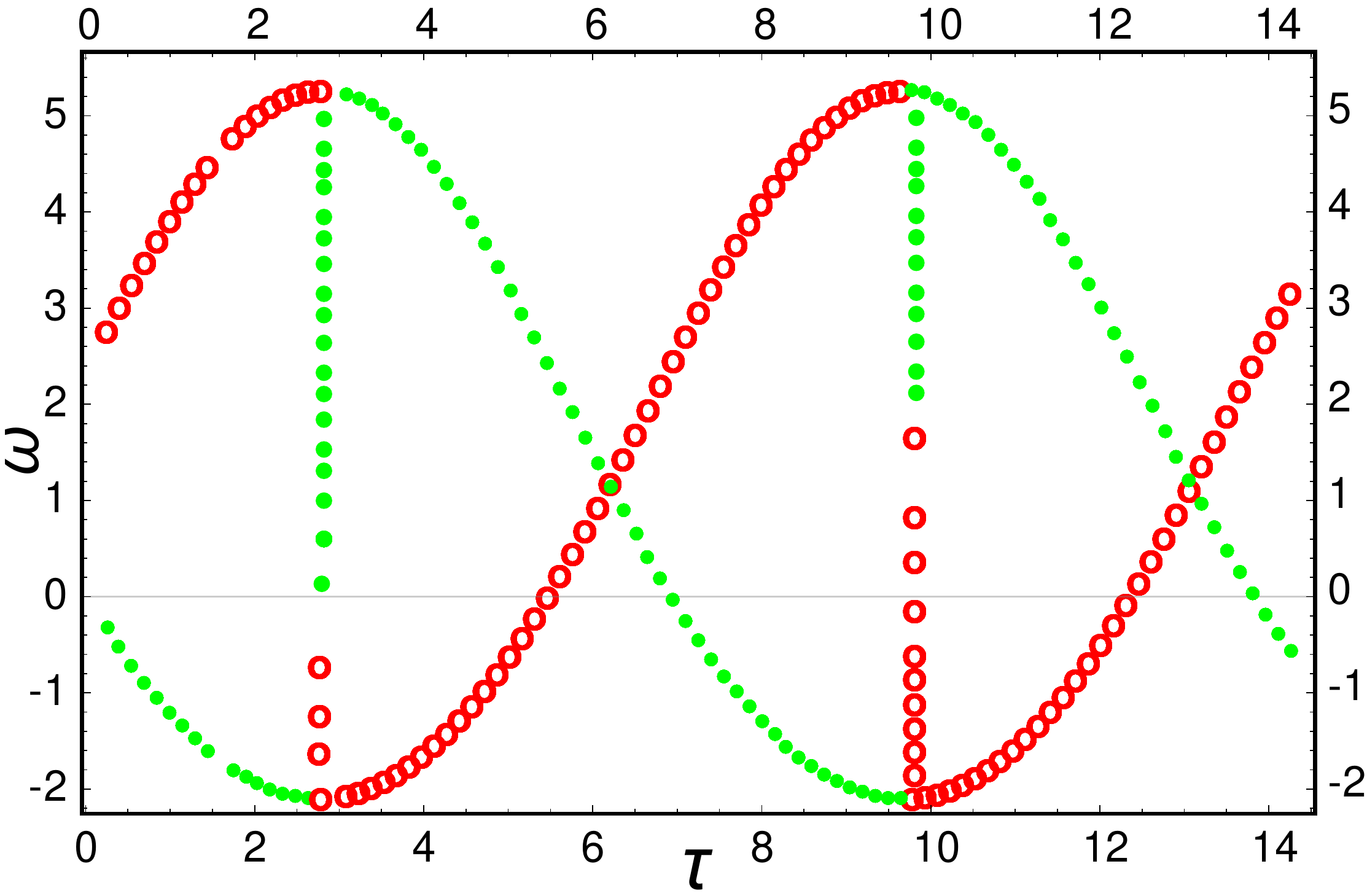}
		\caption{$\tau\in(0,15)$}
		\label{fig2_A0}
	\end{subfigure}%
	\begin{subfigure}{.33\textwidth}
		\centering
		\includegraphics[width=0.9\textwidth]{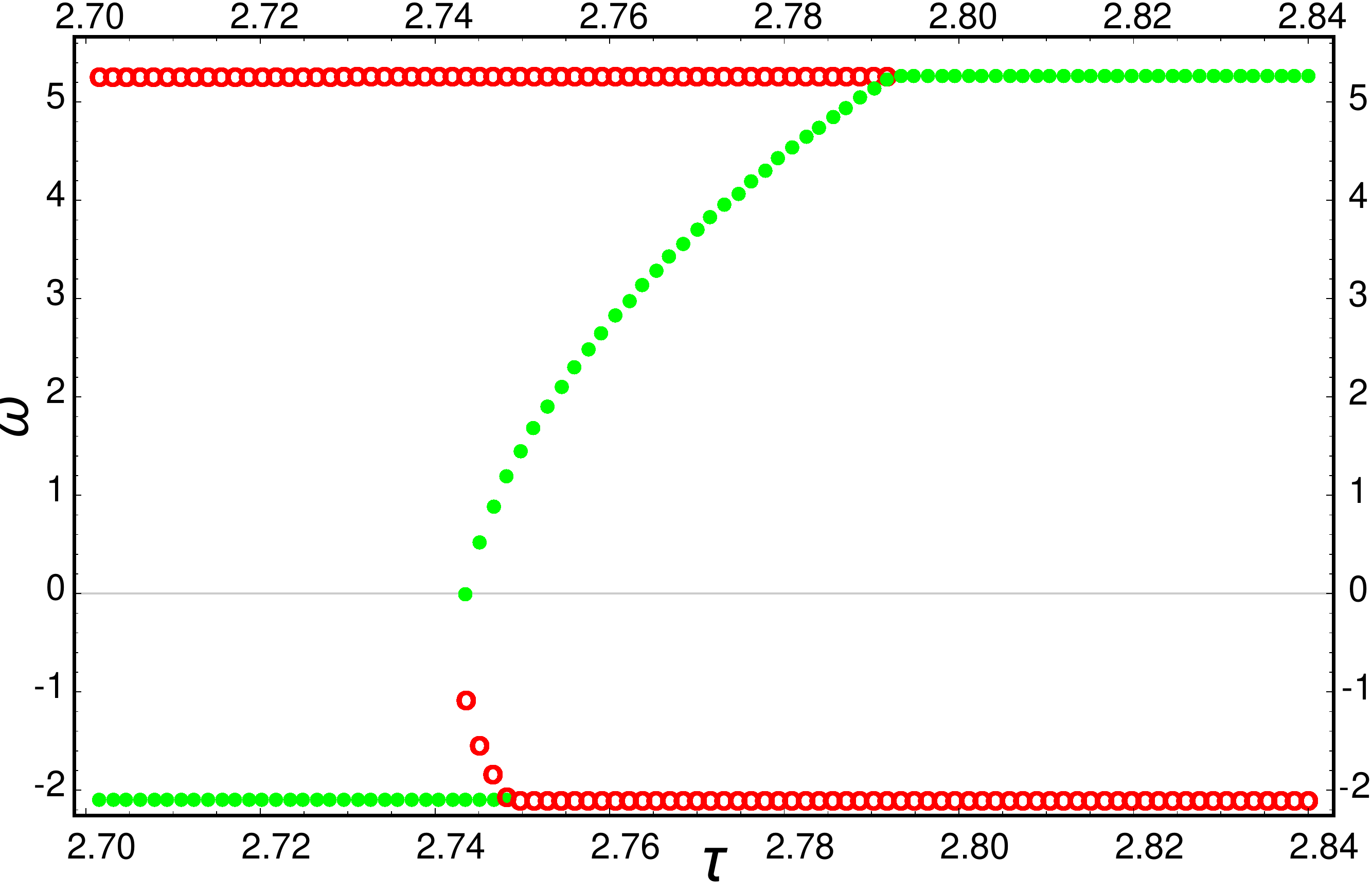}
		\caption{$\tau\in(2.7,2.84)$}
		\label{fig2_B0}
	\end{subfigure}
	\begin{subfigure}{.33\textwidth}
		\centering
		\includegraphics[width=0.9\textwidth]{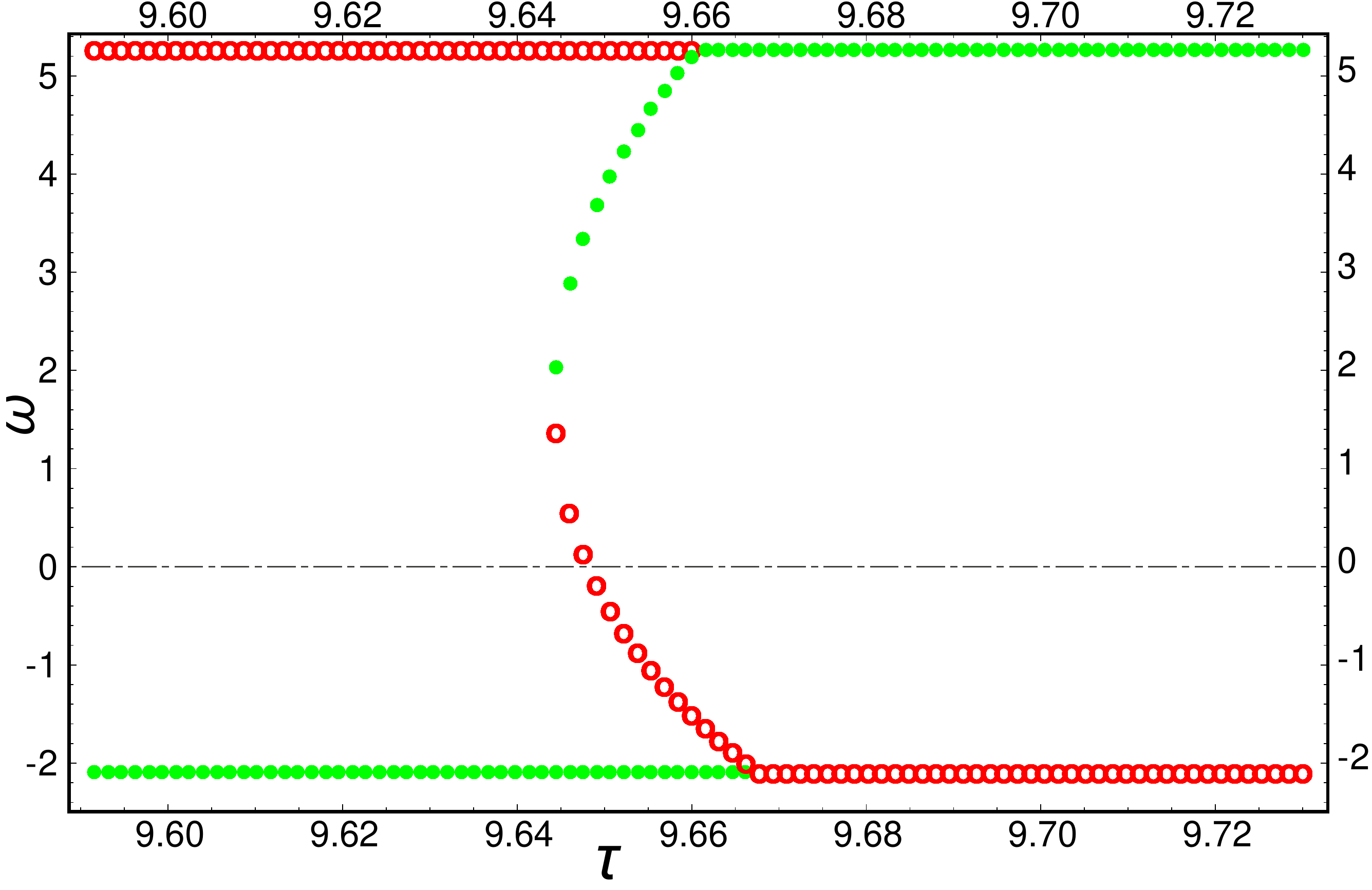}
		\caption{$\tau\in(9.59,9.73)$}
		\label{fig2_C0}
	\end{subfigure}
	\caption{Numerical bifurcation diagram with respect to $\tau$ for the 
phase model  \eqref{phase_Mod} corresponding to the Morris-Lecar model (\ref{MLmodelNor}) with parameter set II. The circles   \textcolor{green}{$\CIRCLE$}/\textcolor{red}{$\Circle$} represent stable/unstable solutions in the phase model (\ref{sysA1}).} 	\label{fig21}
\end{figure}

\section{Conclusions}
\label{sec_conc}

In this paper, we studied the phase-locking dynamics of a system of two weakly connected oscillators with time-delayed interaction. 
By applying the theory of weakly coupled oscillators, we transformed the system into a phase model with an explicit delay in the argument of the phases.  We showed that the system always
has phase-locked solutions corresponding to in-phase (synchronous, $0$ phase difference) 
and anti-phase (phase difference of half the period) solutions. Further, we showed for
small delay (\review{$\Omega\tau=\mathcal{O}(1)$}) the in-phase and anti-phase solutions
are unique, but for large delay multiple solutions of each type may exist, corresponding to different frequencies. Finally, we showed that phase-locked solutions with any
other phase differences (out-of-phase solutions) are also possible. 
Since the phase model is an infinite-dimensional system of delay differential equations, the linearized system about the phase-locked solutions has a countable infinity of eigenvalues. 
Through the stability analysis for our model, we discussed the distribution of the 
eigenvalues on the complex plane to 
provide stability conditions for the in-phase, anti-phase and out-of-phase solutions. 
We found that the zero eigenvalue always exists for any choice of parameters and functions which corresponds to the motion along the phase-locked solutions. We showed that the only
way in which bifurcations can occur is through the existence of (additional) zero
eigenvalues and argued
that the following bifurcations may occur: saddle-node bifurcations of two in-phase solutions
with different frequencies, saddle-node bifurcations of two anti-phase solutions 
with different frequencies, saddle-node bifurcations of two different out-of-phase solutions,
pitchfork bifurcations where two out-of-phase solutions arise from an in-phase
or anti-phase solution. We showed that the saddle-node bifurcations of in-phase and 
anti-phase solutions only involve unstable solutions. 

Our results on in-phase and anti-phase solutions agree with those in
\cite{Schuster1989Mutual,ermentrout2009delays}, which study the phase model 
(\ref{Kuramoto_model_Exp_Delay}), with $n=2$  and $H(\cdot)=\sin(\cdot)$. 
We note that they emphasized the need for large coupling-strength
for multiple in-phase/anti-phase solutions to exist, however, we show that it is possible
with weak coupling and sufficiently large delays. They do not study out-of-phase
solutions as these are not possible in their model due to the restriction on $H$.
As can be seen in the literature 
\cite{crook1997role,park2016weakly,wall2013synchronization,zhang2015robust}, in order for 
phase models derived from biophysical oscillator models to adequately capture the 
dynamics of the full model,  the function $H$ generally must include multiple Fourier modes.
\review{In \cite{campbell2012phase} it was shown that out-of-phase solutions and pitchfork
bifurcations cannot occur in a phase model with
small delay if only the first Fourier modes are included
in $H$. However, when the time delay is large, we showed that both out-of-phase solutions and pitchfork
bifurcations can occur in the phase model with   only the first Fourier modes of $H$.
In general, in the case of large time delay, the bifurcation structure may change if some modes are dropped.
If the coefficients of the modes that are dropped are small, then the bifurcation structure wouldn't change much. The bifurcation points may just move around. If the coefficients of the modes dropped are big enough then there could be large changes in the bifurcation structure.}

When the delay is small (\review{$\Omega\tau=\mathcal{O}(1)$}), Campbell and Kobelevskiy studied the system 
\beqn
\label{Campbell&Kobelevskiy}
\frac{d\theta_1}{d{t}}&=\Omega+\epsilon H(\theta_2(t)-\theta_1(t)-\Omega\tau),\\
\frac{d\theta_2}{d{t}}&=\Omega+\epsilon H(\theta_1(t)-\theta_2(t)-\Omega\tau),
\eeqn
and proved that in-phase and anti-phase solutions are stable  when $H'(\phi^*-\Omega \tau)>0$, $\phi^*\in\{0,\pi\}$ in \cite{campbell2012phase}. 
On the other hand, when the time delay is large \review{$\epsilon\Omega\tau=\mathcal{O}(1)$}, we proved that these solutions are stable whenever $H'(\phi^*-\omega^*\epsilon\Omega\tau-\Omega \tau)>0$ where $\omega^*$ is the corresponding frequency deviation. 
It is clear that the stability condition in the first case is independent of the coupling strength parameter and the frequency deviation. Indeed, under the assumption $\theta_1(t)=\Omega+\omega t$ and $\theta_2(t)=\Omega+\omega t+\phi^*$ (see (\ref{phases})), the terms of the frequency deviation $\omega$ will cancel out inside the function $H$ in (\ref{Campbell&Kobelevskiy}).
In fact, in \cite{campbell2012phase}, the authors reduce (\ref{Campbell&Kobelevskiy}) into a single ordinary differential equation and study the dynamics of the model without introducing the frequency deviation. 
Due to the explicit delay in the phase model, we couldn't reduce the model into a single equation.
For the out-of-phase solutions $\phi^*\notin \{0,\pi\}$, the stability condition $H'(\phi^*-\Omega \tau)>0$  is still valid when   the delay is small. 
While for the large delay the stability becomes more complicated since the explicit delay
is an additional parameter that needs to be considered in the phase model.

As an example we considered two Morris-Lecar oscillators with delayed, diffusive coupling. 
We adopted the parameter values from \cite{campbell2012phase} to compare the results when 
the time delay is small.  We studied the existence and stability of the phase-locked 
solutions, and explored the bifurcations in the phase model by using a four mode trunction 
of the Fourier series for the interaction function and compared these results with
numerical simulations of the full model.
When the time delay $\tau$ is large,  we found: 
\begin{itemize}
	\item There exist more than one frequency deviation $\omega$ corresponding to the in-phase and anti-phase solutions, i.e., co-existence of multiple stable and unstable solutions;
\item All out-of-phase solutions are unstable;
\item Both the pitchfork and saddle-node bifurcations of in-phase and anti-phase solutions
occur. 
\end{itemize}
When the time delay is small, we observed:
\begin{itemize}
	\item \review{Unique solution in each phase-locked solution category (in-phase, anti-phase and out-of-phase).}
	
		\item The occurrence of saddle-node bifurcations of out-of-phase solutions and
pitchfork bifurcations of in-phase and anti-phase solutions.
\end{itemize}
Our results agree  with \cite{campbell2012phase}  when the time delay is small  and  are consistent with the results in  \cite{izhikevich1998phase}, that the explicit time delay can be neglected in the phase model when $\tau$ is small.

A special type of phase-locked solutions, so-called \textit{symmetric cluster solutions}, can appear in a network of $n$ identical oscillators, see e.g., \cite{campbell2018phase,okuda1993variety}, 
\begin{equation}
\label{Campbell&Wang1}
\frac{d \mathbf{X}_{i}}{d t}={\mathbf{F}}\left(\mathbf{X}_{i}(t)\right)+\epsilon \sum_{j=1}^{n} a_{i j} {\mathbf{G}}\left(\mathbf{X}_{i}(t), \mathbf{X}_{j}\left(t-\tau\right)\right), \quad i=1, \ldots, n, \quad \mathbf{X}_i\in\mathbb{R}^m. 
\end{equation}
In these solutions, also called travelling wave solutions, oscillators in 
the same cluster are synchronized while
those in different clusters have non-zero phase-difference.
In \cite{campbell2018phase}, Campbell and Wang determined conditions for existence and stability of symmetric cluster solutions in  (\ref{Campbell&Wang1}) when $\tau$ is small and 
the coupling matrix is circulant. Stability conditions for cluster solutions in networks 
with small distance dependent delays and random, nearest neighbour coupling have
been formulated by several authors (see \cite{ermentrout2009delays,ko2004wave} and references therein).
When the time delay is large, Earl and Strogatz provided the stability condition 
for the in-phase solution  ($\theta_i(t)=\Omega t$, i.e., one cluster solution), see  \cite{earl2003synchronization}.
For future research,  it would be interesting to study the existence and stability of 
symmetric cluster solutions in (\ref{Campbell&Wang1}) with large time delay.

\section*{Acknowledgments}
The authors would like to thank the anonymous referees for their careful
reading and helpful suggestions.

\appendix
\numberwithin{equation}{section}
\makeatletter
\def\@seccntformat#1{\@ifundefined{#1@cntformat}%
   {\csname the#1\endcsname\quad}
   {\csname #1@cntformat\endcsname}
}
\newcommand{\section@cntformat}{Appendix:\ }
\makeatother
\section{\review{Phase reduction}}
Assume that the system  (\ref{newmodel}) admits an exponentially asymptotically stable periodic orbit with natural frequency $\Omega$ when  $\epsilon=0$. 
It follows from the time rescaling $\rho\to\Omega\rho$ that the natural frequency of the periodic orbit becomes $1$   and (\ref{newmodel}) can be written as 
\begin{equation}
\frac{{d{{\mathbf{X}}_i}}}{{d\rho}} = \frac{1}{\Omega} {\mathbf{F}}({{\mathbf{X}}_i}(\rho)) + \frac{\epsilon}{\Omega}\sum\limits_{j = 1}^n {{K_{ij}}} {\mathbf{G}}({{\mathbf{X}}_i}(\rho),{{\mathbf{X}}_j}(\rho - \Omega\tau )), \quad i = 1, \ldots n,\ {{\mathbf{X}}_i} \in {\mathbb{R}^m} 
\label{newmodel_A}
\end{equation}
Consequently,  there exists  a normally hyperbolic
invariant manifold $M=\gamma\times\cdots\times\gamma$ of system (\ref{newmodel_A}) when  $\epsilon=0$, where $\gamma$ is an exponentially orbitally stable $2\pi-$periodic solution of 
	\begin{equation}
	\label{App_0}
		\frac{d\mathbf{X}_i}{d\rho}=\frac{1}{\Omega}{\mathbf{F}}({\mathbf{X}_i}(\rho))\quad  i=1,\ldots,n.
	\end{equation}
Hence,  the solution of the $i^{th}$ equation of  (\ref{newmodel_A}) in an $\epsilon$ neighborhood of $M$ can be written as
\begin{equation}\label{App_1}
    {{\mathbf X}_i}(\rho) = \gamma \left( {\rho  + {\varphi _i}(t)} \right) + \epsilon {P_i}\left( {\rho ,\varphi_1(t),\ldots, \varphi_n(t),\epsilon} \right)
\end{equation}
where the term $\epsilon{P_i}$ is a smooth vector function which denotes the deviation from the manifold $M$ in the normal plane. 

Recall that $t=\epsilon\rho$ and let $\eta:=\epsilon\Omega\tau$, then the substitution of \eqref{App_1} in \eqref{newmodel_A} gives
 \beqn
\frac{{d{{\mathbf{X}}_i}}}{{d\rho }} &= \frac{1}{\Omega }{\mathbf{F}}\left[ {\gamma \left( {\rho  + {\varphi _i}(t)} \right) + \epsilon P_i\left( {\rho ,\varphi_1(t),\ldots, \varphi_n(t),\epsilon } \right)} \right]\\
&~ 
 + \frac{\epsilon }{\Omega }\sum_{j=1}^{n} K_{ij}{\mathbf{G}}\left[ {\gamma \left( {\rho  + {\varphi _i}(t)} \right) + \epsilon P_i\left( {\rho ,\varphi_1(t),\ldots, \varphi_n(t),\epsilon } \right),} \right.\\
&~\qquad \qquad \left. {\gamma \left( {\rho  - \Omega \tau  + {\varphi _j}(t - \eta )} \right) + \epsilon P_j\left( {\rho ,{\varphi _1}(t - \eta ),\ldots,{\varphi _n}(t - \eta ),\epsilon } \right)} \right]. 
\label{App_3}
\eeqn
Due to the infinite differentiability of ${\mathbf{F}}$ and ${\mathbf{G}}$, it follows from \eqref{App_3} that 
\beqn
\frac{{d{{\mathbf{X}}_i}}}{{d\rho }} = \frac{1}{\Omega }{\mathbf{F}}\left[ {\gamma \left( {\rho  + {\varphi _i}(t)} \right)} \right] + \frac{\epsilon }{\Omega }D{\mathbf{F}}\left[ {\gamma \left( {\rho  + {\varphi _i}(t)} \right)} \right]P_i\left( {\rho ,\varphi_1(t),\ldots, \varphi_n(t),\epsilon } \right)\\
 + \frac{\epsilon }{\Omega }\sum_{j=1}^{n} K_{ij}{\mathbf{G}}\left[ {\gamma \left( {\rho  + {\varphi _i}(t)} \right),\gamma \left( {\rho  - \Omega \tau  + {\varphi _j}(t - \eta )} \right)} \right] + \mathcal{O}\left( {{\epsilon ^2}} \right)
 \label{App_4}
\eeqn
where  $D{\mathbf{F}}$ is the Jacobian matrix of  ${\mathbf{F}}$.

Now, we differentiate  ${\mathbf X}_i$ in \eqref{App_1} with respect to $\rho$ to have 
\begin{equation}\label{App_2}
   \frac{{d{{\mathbf X}_i}}}{{d\rho }} = \gamma '\left( {\rho  + {\varphi _i}(t)} \right)\left( {1 + \epsilon \frac{{d{\varphi _i}}}{{dt }}} \right) + \epsilon \frac{{\partial {P_i}\left( {\rho ,\varphi_1(t),\ldots, \varphi_n(t),\epsilon } \right)}}{{\partial \rho }} + \mathcal{O}\left( {{\epsilon ^2}} \right).
\end{equation}
Note that
\begin{equation}\label{App_5}
 {\gamma ^\prime }\left( {\rho  + {\varphi _i}(t)} \right) = \frac{1}{\Omega }{\mathbf{F}}\left[ {\gamma \left( {\rho  + {\varphi _i}(t)} \right)} \right].
\end{equation}
Thus, from  \eqref{App_4} and \eqref{App_2}, we obtain
\begin{align}\label{App_6}
   {\mathbf{F}}\left[ {\gamma \left( {\rho  + {\varphi _i}(t)} \right)} \right]\frac{{d{\varphi _i}(t)}}{{dt}}  &+ \Omega \frac{{\partial y_i\left( {\rho ,\varphi_1(t),\ldots, \varphi_n(t)} \right)}}{{\partial \rho }}
= D{\mathbf{F}}\left[ {\gamma \left( {\rho  + {\varphi _i}(t)} \right)} \right]y\left( {\rho ,\varphi_1(t),\ldots, \varphi_n(t)} \right)\nonumber \\
&{ +\sum_{j=1}^{n} K_{ij} {\mathbf{G}}\left[ {\gamma \left( {\rho  + {\varphi _i}(t)} \right),\gamma \left( {\rho  - \Omega \tau  + {\varphi _j}(t - \eta )} \right)} \right] + \mathcal{O}(\epsilon )} 
\end{align}
 where $y_i\left( {\rho ,\varphi_1(t),\ldots, \varphi_n(t)} \right): = P_i\left( {\rho ,\varphi_1(t),\ldots, \varphi_n(t),0} \right) +  \mathcal{O}(\epsilon )$ because $P_i$ is smooth function of $\epsilon$. 
Consequently, since $t=\epsilon\rho$, we replace $\frac{\partial y_i}{\partial \rho}$ by $\frac{d y_i}{d \rho}$  in \eqref{App_6}. Hence,  we can write \eqref{App_6} as:
\begin{equation}\label{App_7}
 \frac{{dy_i}}{{d\rho }} = A\left( {\rho ,{\varphi _i}} \right)y + {b_i}\left( {\rho ,{\varphi _1},\ldots,{\varphi _n}} \right) + \mathcal{O}(\epsilon )
\end{equation}
where $\varphi_i$ is $\varphi_i(t)$,  
\[A\left( {\rho ,{\varphi _i}} \right) = \frac{1}{\Omega }{\mathbf{F}}\left[ {\gamma \left( {\rho  + {\varphi _i}(t)} \right)} \right]\]
and  
\[{b_i}\left( {\rho ,{\varphi _i},\ldots,{\varphi _n}} \right) = \frac{1}{\Omega }\left[ {\left( {\sum\limits_{j = 1}^n {{K_{ij}}} {\bf{G}}\left[ {\gamma \left( {\rho  + {\varphi _i}(t)} \right),\gamma \left( {\rho  - \Omega \tau  + {\varphi _j}(t - \eta )} \right)} \right]} \right) - {\bf{F}}\left[ {\gamma \left( {\rho  + {\varphi _i}(t)} \right)} \right]\frac{{d{\varphi _i}(t)}}{{dt}}} \right].\]
Since ${{\varphi _i}(t)}$ and ${{\varphi _i}(t - \eta)}$ in  $b$  do not depend directly on $\rho$, we have a linear non-homogeneous
system for $y_i$, where both the matrix $A$ and the vector $b_i$ are $2\pi$ periodic in $\rho$.
  
To study existence and uniqueness of solutions to \eqref{App_7},  
we consider the adjoint linear homogeneous system
\begin{equation}\label{App_8}
    \frac{{d{Q_i{\left( {\rho ,{\varphi _i}} \right)}}}}{{d\rho }} =  - A{\left( {\rho ,{\varphi _i}} \right)^T}{Q_i}{\left( {\rho ,{\varphi _i}} \right)}
\end{equation}
 with the normalization condition:
 \begin{equation}\label{App_9}
   \frac{1}{{2\pi }}\int\limits_0^{2\pi } {Q_i^T\left( {\rho ,{\varphi _i}} \right)} {\mathbf{F}}\left[ {\gamma \left( {\rho  + {\varphi _i}} \right)} \right]d\rho  = 1.
\end{equation}
Since the limit cycle $\gamma$ is exponentially orbitally stable, the 
homogeneous ($b_i\equiv 0$) linear system of the form \eqref{App_7}
the adjoint system \eqref{App_8} both have $1$ as
a simple Floquet multiplier, and all the other multipliers lie inside the unit circle.
Thus, system  \eqref{App_8}-\eqref{App_9} has a unique nontrivial periodic solution  $ {q_i\left( {\rho ,{\varphi _i}} \right)}$.

Now, by the Fredholm alternative, the linear non-homogeneous system \eqref{App_7}
has a unique periodic solution $Y_i$ if and only if the following orthogonality condition
holds:
\begin{equation}\label{App_10}
\left\langle {{q_i},{b_i}} \right\rangle  + \mathcal{O}(\epsilon ) = \frac{1}{{2\pi }}\int\limits_0^{2\pi } {q_i^T\left( {\rho ,{\varphi _i}} \right)} {b_i}\left( {\rho ,{\varphi _i},\ldots,{\varphi _n}} \right)d\rho  + \mathcal{O}(\epsilon ) = 0.
\end{equation}
Assume that $q_i(\rho,0)$ is found. Hence, $q_i(\rho,\varphi_i)=q_i(\rho+\varphi_i,0)$ because  $A\left( {\rho ,{\varphi _i}} \right) = \frac{1}{\Omega }D{\mathbf{F}}\left[ {\gamma \left( {\rho  + {\varphi _i}(t)} \right)} \right] = A\left( {\rho  + {\varphi _i}(t)} \right)$.
Thus, when we substitute $b_i$  in \eqref{App_10}, we obtain the following:
\beqnn
&\frac{1}{{2\pi \Omega }}\int\limits_0^{2\pi } {q_i^T\left( {\rho  + {\varphi _i},0} \right)} \left( {\sum\limits_{j = 1}^n {{K_{ij}}{\bf{G}}\left[ {\gamma \left( {\rho  + {\varphi _i}(t)} \right),\gamma \left( {\rho  - \Omega \tau  + {\varphi _j}(t - \eta )} \right)} \right]} } \right)d\rho  + {\cal O}(\epsilon )\\
&\qquad\hspace{6cm}= \frac{1}{{2\pi}}\int\limits_0^{2\pi} {q_i^T\left( {\rho  + {\varphi _i},0} \right)} D{\mathbf{F}}\left[ {\gamma \left( {\rho  + {\varphi _i}(t)} \right)} \right]\frac{{d{\varphi _i}(t)}}{{dt}}d\rho.
\eeqnn
Since $\frac{{d{\varphi _i}(t)}}{{dt}}$ is treated as a parameter and is independent of $\rho$, it follows from the normalization condition \eqref{App_9} that 
\[\frac{{d{\varphi _i}(t)}}{{dt}} = \frac{1}{{2\pi \Omega }}\int\limits_0^{2\pi } {q_i^T\left( {\rho  + {\varphi _i},0} \right)} \left( {\sum\limits_{j = 1}^n {{K_{ij}}{\bf{G}}\left[ {\gamma \left( {\rho  + {\varphi _i}(t)} \right),\gamma \left( {\rho  - \Omega \tau  + {\varphi _j}(t - \eta )} \right)} \right]} } \right)d\rho  + {\cal O}(\epsilon )\]
Letting $s=\rho+\varphi_i$ leads to
\[\frac{{d{\varphi _i}(t)}}{{dt}} = \frac{1}{{2\pi \Omega }}\sum\limits_{j = 1}^n {{K_{ij}}} \left( {\int\limits_0^{2\pi } {q_i^T\left( {s,0} \right)} {\bf{G}}\left[ {\gamma \left( s \right),\gamma \left( {s - \Omega \tau  + {\varphi _j}(t - \eta ) - {\varphi _i}(t)} \right)} \right]ds } \right) + {\cal O}(\epsilon )\]
Define
\[H\left( {{\varphi _j}(t - \eta ) - {\varphi _i}(t) - \Omega \tau } \right) = \frac{1}{{2\pi }}\int\limits_0^{2\pi } {q_i^T\left( {s,0} \right)} {\mathbf{G}}\left[ {\gamma \left( s \right),\gamma \left( {s - \Omega \tau  + {\varphi _j}(t - \eta ) - {\varphi _i}(t)} \right)} \right]ds.\]
Thus, we have system \eqref{pairwise} with 
\begin{equation}\label{pairwise_AA} 
    \frac{{d{\varphi _i}(t)}}{{dt}} = \frac{1}{{\Omega}}\sum\limits_{j = 1}^n {{K_{ij}}}H\left( {{\varphi _j}(t - \eta ) - {\varphi _i}(t) - \Omega \tau } \right)+ \mathcal{O}(\epsilon ).
\end{equation}

Recall that $\eta=\epsilon\Omega\tau$. Hence, when $\Omega \tau =\mathcal{O}(1)$ with respect to $\epsilon$, we have
\[\varphi_i(t-\eta)=\varphi_i(t-\epsilon\Omega\tau)=\varphi_i(t)+\mathcal{O}(\epsilon),\qquad i=1,\ldots,n.\]
Consequently, the Taylor series expansion for $h$ with respect to $\epsilon$ gives 
\beqnn
H\left( {{\varphi _j}(t - \epsilon \Omega \tau ) - {\varphi _i}(t) - \Omega \tau } \right) &=H\left( {{\varphi _j}(t) - {\varphi _i}(t) - \Omega \tau  + \mathcal{O}(\epsilon )} \right)\\
&=H\left( {{\varphi _j}(t) - {\varphi _i}(t) - \Omega \tau} \right)  + \mathcal{O}(\epsilon ),
\eeqnn
that is, no delay appears in the argument of the phases. Hence, (\ref{pairwise_AA}) 
becomes: 
\begin{equation}\label{smaleDealy_AA} 
    \frac{{d{\varphi _i}(t)}}{{dt}} = \frac{1}{{\Omega}}\sum\limits_{j = 1}^n {{K_{ij}}}H\left( {{\varphi _j}(t) - {\varphi _i}(t) - \Omega \tau } \right)+ \mathcal{O}(\epsilon ).
\end{equation}

\bibliographystyle{ieeetr}
\bibliography{references}

\end{document}